\numberwithin{equation}{section}
\renewcommand{\ge}{\geqslant}
\renewcommand{\le}{\leqslant}
\DeclareMathOperator{\Id}{Id}
\let \d \relax
\newcommand{\d}{\mathrm{d}}
\newcommand{\I}{\mathrm{i}}
\newcommand{\scp}[2]{\langle #1,#2\rangle}
\newcommand{\Q}{\mathcal{Q}}
\newcommand{\eb}{\mathbf{e}}
\let\O\relax
\newcommand{\O}[1]{\mathcal{O}\left(#1\right)}
\newcommand{\E}[1]{\mathds{E}\left[#1\right]}
\newcommand{\EL}[1]{\mathds{E}\left[#1\middle\vert\bm{\lambda}\right]}
\newcommand{\unn}[2]{[\![#1,#2]\!]}
\let\Im\relax
\DeclareMathOperator{\Im}{Im}
\let\Re\relax
\DeclareMathOperator{\Re}{Re}
\DeclareMathOperator{\Tr}{Tr}
\DeclareMathOperator{\tr}{Tr}
\newcommand{\e}{\mathrm{e}}
\newcommand{\one}{\mathbbm{1}}
\renewcommand{\epsilon}{\varepsilon}
\renewcommand{\tilde}{\widetilde}
\newcommand{\mom}[4]{\scp{\q_{\alpha_1}}{\u_{#1}}\scp{\q_{\beta_1}}{\u_{#2}}\scp{\q_{\alpha_2}}{\u_{#3}}\scp{\q_{\beta_2}}{\u_{#4}}}
\def\bet{\begin{thm}}
\def\eet{\end{thm}}
\def\bel{\begin{lem}}
\def\eel{\end{lem}}
\def\bas{\begin{ass}}
\def\eas{\end{ass}}
\def\bec{\begin{cor}}
\def\eec{\end{cor}}
\def\bed{\begin{defn}}
\def\eed{\end{defn}}
\def\bep{\begin{prop}}
\def\eep{\end{prop}}
\def\beq{\begin{equation}}
\def\eeq{\end{equation}}
\def\bea{\begin{equation*}}
\def\eea{\end{equation*}}
\def\bex{\begin{ex}}
\def\eex{\end{ex}}
\def\bp{\begin{proof}}
\def\ep{\end{proof}}
\def\1{{\mathbbm 1}}
\def\benr{\begin{enumerate}[label=(\roman*)]}
\def\eenr{\end{enumerate}}
\def\A{\mathcal{A}}
\def\B{\mathcal{B}}
\def\F{\mathcal{F}}
\def\M{\mathcal{M}}
\def\D{\mathcal{D}}
\def\N{\mathbb{N}}
\def\R{\mathbb{R}}
\def\C{\mathbb{C}}
\def\P{\mathbb{P}}
\def\E{\mathbb{E}}
\def\q{\mathbf{q}}
\newcommand\x{\mathbf{x}}
\def\S{\mathbb{S}}
\renewcommand{\u}{\mathbf{u}}
\renewcommand{\v}{\mathbf{v}}
\def\msc{m_{\mathrm{sc} } }
\def\one{{\mathbbm 1}}
\def\eps{\varepsilon}
\def\scrho{\rho_{\mathrm{sc} }}
\newcommand{\bma}{\begin{bmatrix}}
\newcommand{\ema}{\end{bmatrix}}
\def \wt {\widetilde}
\def\phi{\varphi}
\def\matn{\mathrm{Mat}_{N} }
\DeclareMathOperator{\GOE}{GOE}
\renewcommand{\hat}{\widehat}
\newcommand{\ximax}{\widetilde{\bm{\xi}}}
\newcommand{\fa}{\mathfrak{a}}
\newtheorem{ccounter}{ccounter}[section]
\newtheorem{thm}[ccounter]{Theorem}
\newtheorem{lem}[ccounter]{Lemma}
\newtheorem{cor}[ccounter]{Corollary}
\newtheorem{defn}[ccounter]{Definition}
\newtheorem{prop}[ccounter]{Proposition}
\newtheorem{ass}[ccounter]{Assumption}
\newtheorem{ex}[ccounter]{Example}
\theoremstyle{definition}
\newtheorem{rmk}[ccounter]{Remark}
\titleformat{\paragraph}[runin]{\itshape\normalsize}{\theparagraph}{}{}
\titleformat{\subparagraph}[runin]{\itshape\normalsize}{\theparagraph}{0em}{}
\titleformat{\section}[block]{\normalfont\filcenter}{\Large\thesection .}{.7em}{\Large\scshape}
\titleformat{\subsection}[runin]{\normalfont}{\large \bf \thesubsection .}{.5em}{\large\bf}[.]
\titleformat{\subsubsection}[runin]{\normalfont}{\bf \thesubsubsection .}{.5em}{\bf}[.]
\begin{document}
\tikzset{every node/.style={draw,circle,fill=black, scale=.5}}

\title{\scshape\bfseries{Fluctuations in local quantum unique ergodicity for generalized Wigner matrices}}
\author{L. \textsc{Benigni}\\\vspace{-0.15cm}\footnotesize{\it{University of Chicago}}\\\footnotesize{\it{lbenigni@math.uchicago.edu}}\and P. \textsc{Lopatto\footnote{P.L. is supported by National Science Foundation grant DMS-1926686.}}\\\vspace{-0.15cm}\footnotesize{\it{Institute for Advanced Study}}\\\footnotesize{\it{lopatto@ias.edu}}}
\date{}
\maketitle

\renewcommand{\abstractname}{\normalsize\normalfont\scshape Abstract}
\renewcommand{\contentsname}{}

\begin{abstract}
\small{
We study the eigenvector mass distribution for generalized Wigner matrices on a set of coordinates $I$,
where $N^\eps \le | I |  \le N^{1- \eps}$, and prove it converges to a Gaussian at every energy level, including the edge, as $N\rightarrow \infty$.
The key technical input is a four-point decorrelation estimate for eigenvectors of matrices with a large Gaussian component.
Its proof is an application of the maximum principle to a new set of moment observables satisfying
parabolic evolution equations.
Additionally, we prove high-probability Quantum Unique Ergodicity and Quantum Weak Mixing bounds for all eigenvectors and all deterministic sets of entries using a novel bootstrap argument.
}
\end{abstract}

{
  \hypersetup{linkcolor=black}
  \tableofcontents
}

\section{Introduction}
Wigner random matrices are the simplest example of a fully delocalized random Hamiltonian, and consequently, they are a fundamental model in mathematical physics. 
The goal of this work is to prove a strong form of delocalization for such matrices, which we characterize by the averages over subsets of eigenvector entries. 
When averaging over coordinate sets asymptotically smaller than the size of the matrix,
one expects that the entries should behave as independent random variables. To make this notion precise, we begin by considering a matrix of Gaussians, which is amenable to direct computation.

Recall that the Gaussian Orthogonal Ensemble (GOE) is defined as the $N \times N$ real symmetric random matrix $\mathrm{GOE}_N = \{ g_{ij} \}_{1 \le i,j \le N}$ whose upper triangular entries $g_{ij}$ are mutually independent Gaussian random variables with variances $( 1 + \mathds{1}_{i =j}) N^{-1}$. 
Its distribution is invariant under multiplication by orthogonal matrices, which implies its $\ell^2$-normalized eigenvectors are uniformly distributed on the unit sphere $\S^{N-1}$.
Using this fact, one can directly calculate the mass distribution of these eigenvectors. 
Fix $\alpha \in (0,1)$, let $I_N$ denote a set $I_N \subset \unn{1}{N}$ with $| I_N | = \lfloor N^\alpha \rfloor$,
and let the norm $\| \cdot \|_{I_N}^2$ denote the $\ell^2$ norm restricted to the coordinates in $I_N$.
A straightforward computation shows that if $\mathbf{v}_N$ is uniformly distributed on $\S^{N-1}$, then
\beq\label{clt}
\sqrt{\frac{N^2}{2 \left|I_N\right|}} \left( \left\| \mathbf{v}_N \right\|^2_{I_N} - \frac{\left|I_N\right|}{N} \right)\rightarrow \mathcal N(0,1)
\eeq
in distribution as $N \rightarrow \infty$, where $\mathcal{N}(0,1)$ is a standard normal random variable \cite{o2016eigenvectors}*{Theorem 2.4}. 
This central limit theorem makes precise the notion that the $\mathrm{GOE}_N$ eigenvector entries 
fluctuate independently on all sub-global scales. 

This paper is devoted to a proof of \eqref{clt} for a broad class of mean-field random matrices, including Wigner matrices.

\subsection{Main results}

We first define generalized Wigner matrices. 

\bed \label{d:wigner} A generalized Wigner matrix $H$ is a real symmetric or complex Hermitian $N\times N$ matrix whose upper triangular elements $\{h_{ij}\}_{i\le j}$ are independent random variables with mean zero and variances $\sigma_{ij}^2=\E\left[|h_{ij}|^2\right]$ that satisfy 

\beq \label{e:stochasticvar}
\sum_{i=1}^N \sigma_{ij}^2 =1 \quad \text{ for all } j\in \llbracket1,N\rrbracket\eeq
and
\beq 
\frac{c}{N} \le \sigma_{ij}^2 \le \frac{C}{N} \quad  \text{ for all } i,j \in \llbracket1,N\rrbracket\eeq
for some constants $C, c >0$. 
Further, we suppose that the normalized entries have finite moments, uniformly in $N$, $i$, and $j$, in the sense that for all $p\in \N$ there exists a constant $\mu_p$ such that
\beq\label{finitemoments}
\E\left|  \sqrt{N} h_{ij}  \right|^p \le \mu_{p}.\eeq
for all $N$, $i$, and $j$.
\eed

Our first main result determines the mass distribution for a single eigenvector on all local scales. 

\bet\label{t:main1}
Let $H$ be a generalized Wigner matrix and fix $\eps >0$. 
Let $I = I_N \subset \unn{1}{N}$ be a deterministic sequence of subsets satisfying $N^\eps \le | I | \le N^{1- \eps}$, let $i = i_N \in \unn{1}{N}$ be a sequence of indices, and let 
$\u  = \u_{i_N}^{(N)}$ be the corresponding sequence of $\ell^2$-normalized eigenvectors of $H$. Let $(\q_\alpha)_{\alpha\in I}=(\q_{\alpha}^{(N)})_{\alpha\in I}$ be a deterministic sequence of sets of orthogonal vectors in $\S^{N-1}$. Then
\begin{equation}
\sqrt{\frac{\beta N^2}{2 | I |}} \left( \sum_{\alpha\in I} \scp{\q_\alpha}{\u}^2   -  \frac{|I|}{N} \right) \rightarrow \mathcal N(0,1)
\end{equation}
with convergence in the sense of moments.
Here $\mathcal N(0,1)$ is a standard real Gaussian random variable; we take $\beta = 1$ if $H$ is real symmetric, or $\beta = 2$ if it is complex Hermitian.
\eet

\begin{rmk} The proof gives an explicit rate of convergence. The $n$th moment converges with rate $N^{-c}$ for some constant $c(\eps, n) >0$.
\end{rmk}

As an intermediate step in the proof of \Cref{t:main1}, we establish the following theorem. It provides high-probability quantum unique ergodicity and quantum weak mixing estimates for all eigenvectors, on all scales, with an error bound that is optimal up to a factor of $N^\eps$.

\bet\label{t:main2}
Let $H$ be a generalized Wigner matrix.
Let $I = I_N \subset \unn{1}{N}$ be a deterministic sequence of subsets, let $(i ,j )=( i_N,j_N) \in \unn{1}{N}^2$ be a sequence of index pairs with $i_N \neq j_N$ for all $N$, and let 
$(\u,\mathbf{v})  = (\u_{i_N}^{(N)}, \mathbf{v}_{i_N}^{(N)})$ be the corresponding sequence of eigenvector pairs of $H$. 
Let $(\q_\alpha)_{\alpha\in I}=(\q_{\alpha}^{(N)})_{\alpha\in I}$ be a deterministic sequence of sets of orthogonal vectors in $\S^{N-1}$.
Then for every $D, \eps >0$, there exists $C (D,\eps) >0$ such that 
\begin{equation}
\P \left(  \left|\sum_{\alpha\in I} \scp{\q_\alpha}{\u}^2   -  \frac{|I|}{N} \right| \ge N^\eps\frac{\sqrt{\vert I\vert}}{N} \right) \le C N^{-D}
\end{equation}
and 
\begin{equation}
\P \left(  \left|\sum_{\alpha \in I} \scp{\q_\alpha}{\u}\scp{\q_\alpha}{\v} \right| \ge N^\eps\frac{\sqrt{\vert I\vert}}{N}  \right) \le C N^{-D}.
\end{equation}
\eet
\begin{rmk}
Given the conclusion of \Cref{t:main2}, the factor of $N^\eps$ could likely be improved to the optimal logarithmic factor by mimicking the bootstrap argument used to achieve optimal eigenvector delocalization bounds in \cite{benigni2020optimal}. For brevity we do not take this up here.
\end{rmk}

\subsection{Background}

In a disordered quantum system, delocalization can be understood in many ways. 
Perhaps the most familiar perspective is that a delocalized eigenstate is globally ``flat,'' with a small $\ell^\infty$ norm. 
For Wigner matrices, the first strong $\ell^\infty$
bounds were achieved in the seminal papers \cites{erdos2009semicircle, erdos2009local, erdos2010wegner}, which showed $\Vert \sqrt{N}\u_\ell\Vert_\infty\leqslant (\log N)^{4}$ with high probability, assuming strong smoothness and decay conditions on the matrix entries, and that $\u_\ell$ corresponds to an eigenvalue in the bulk of the spectrum. 
This result was then successively improved in a number of directions, including weakening the assumptions on the entries 
\cites{gotze2018local, gotze2019local,aggarwal2019bulk, aggarwal2018goe}, treating eigenvectors at the edge \cites{erdos2012rigidity, tao2010random}, and strengthening the supremum bound \cites{tao2011random, vu2015random, erdos2012bulk, rudelson2015delocalization, o2016eigenvectors}.
Recently, \cite{benigni2020optimal} achieved the optimal $\ell^\infty$ delocalization rate $\sqrt{\log N}$ throughout the spectrum, with optimal constants.

While these results concern delocalization as measured in the standard coordinate basis, 
it is natural to consider arbitrary projections $\langle \q , \u_\ell \rangle$ for unit vectors $\q  \in \S^{N-1}$, known as isotropic delocalization. High-probability isotropic delocalization bounds were first shown in \cites{knowles2013isotropic,bloemendal2014isotropic} and improved to the optimal rate in \cite{benigni2020optimal}.

One can also ask about the behavior of individual eigenvector entries of (generalized) Wigner matrices. Preliminary investigations into this question were made in \cites{knowles2013eigenvector, tao2012random}. Shortly after, \cite{bourgade2013eigenvector} obtained joint normality of any finite number of entries $\big((\u_k(\alpha_1), \dots, \u_k(\alpha_j) \big)$ of a fixed eigenvector $\u_k$, and also joint normality of collections such as $\big(\u_{k_1}(\alpha), \u_{k_2}(\alpha), \dots, \u_{k_j}(\alpha)\big)$, concerning a fixed coordinate of multiple eigenvectors. Further, \cite{bourgade2013eigenvector} was a conceptual breakthrough because it introduced a dynamical approach to the study of eigenvectors via the \emph{eigenvector moment flow}, which we will discuss in detail below. This approach was later applied to sparse matrices \cite{bourgade2017eigenvector}, L\'evy matrices \cite{aggarwal2020eigenvector}, and deformed Wigner matrices \cite{benigni2020eigenvectors}; it was also used to prove the optimal delocalization of eigenvectors in \cite{benigni2020optimal}.  
Recently, \cite{marcinek2020high} introduced the \emph{colored eigenvector moment flow} and gave a sophisticated analysis of it using the energy method, from which the authors derived the joint normality of \emph{any} finite collection of eigenvector entries, from any row or column of the matrix of eigenvectors.

Another form of delocalization, \emph{Quantum Unique Ergodicity} (QUE), was introduced as a conjecture in \cite{rudnick1994behaviour}.
In its original form, it states that for any compact manifold $\mathcal M$ of negative curvature, the eigenstates of the Laplacian become uniformly distributed with respect to the volume form; that is, for any open $U \subset \mathcal M$:
\beq
\int_U \left| \psi_k(x) \right|^2 \d_{\mathrm{vol}} \rightarrow \int_U \d_{\mathrm{vol}} \quad \text{as $k\rightarrow\infty$.}
\eeq
Earlier notions of \emph{quantum ergodicity} demanded only that \emph{most} eigenstates tend to uniformity
\cites{de1985ergodicite,shnirel1974ergodic,zelditch1987uniform}. 
Geometric quantum ergodicity, QUE, and related questions continue to be subjects of active research \cites{
holowinsky2010sieving,
lindenstrauss2006invariant,
anantharaman2008entropy,
anantharaman2015quantum,
brooks2013non,
brooks2014joint,
anantharaman2019quantum,
schubert2008rate,
luo1995quantum,
schubert2006upper,
holowinsky2010mass,
eckhardt1995approach,
marklof2000quantum}.

In the random matrix setting, QUE was proved in \cite{bourgade2013eigenvector} in the following form for all $I \subset \unn{1}{N}$. For any generalized Wigner matrix and trace zero diagonal matrix $A$ supported on $I$ coordinates with $\| A \| \le 1$, the mass average $N | I |^{-1} \langle \u , A \u \rangle$ tends to zero in probability. It represents a form of eigenvector delocalization on all scales, including those intermediate between single entry normality and global $\ell^\infty$ control.
The convergence rate was later strengthened in \cite{bourgade2018random} for Gaussian divisible ensembles, where it was used to prove universality results for random band matrices; see \cite{bourgade2018survey} for a recent survey. Recently, \cite{benigni2021fermionic} proved decorrelation between the QUE averages for distinct eigenvectors by introducing a new set of moment observables following a variant of the eigenvector moment flow. We also note that a QUE estimate was obtained for random $d$-regular graphs and Erd\H{o}s--R\'enyi graphs in \cite{bauerschmidt2017local} using the exchangeability of the entries. 

One can also consider the stronger \emph{Eigenstate Thermalization Hypothesis} (ETH), which states that for a disordered quantum system, general observables become uniformly distributed \cites{deutsch2018eigenstate, srednicki1994chaos}.
Very recently, \cite{erdHos2020eigenstate} obtained a version of the ETH
for Wigner matrices with identically distributed entries. 
Their result states that if $H$ is a Wigner matrix, then for any deterministic matrix $A$ with $\| A \| \le 1$, 
\beq\label{eth}
\max_{i,j} \left | \langle \u_i, A \u_j \rangle -  \delta_{ij} N^{-1} \tr A \right | \le N^{ - 1/2 + \eps}
\eeq
with high probability (in the sense of \Cref{t:main2}); a similar statement holds with $\overline{\u_j}$ replacing $\u_j$. 
In follow-up work, \cite{cipolloni2021normal} proved Gaussian fluctuations in QUE for global observables. More precisely, for a Wigner matrix $H$ and deterministic traceless $A$ with $\| A \| \le 1$ and $\tr A^2 \ge c N$ for some $c>0$, 
\beq\label{globalfluctuations}
\sqrt{\frac{\beta N^2}{2 \tr A^2 }} \langle \u_i , A \u_i \rangle \rightarrow \mathcal N(0,1)
\eeq
for any eigenvector $\u_i$ corresponding to an eigenvalue $\lambda_i$ in the bulk of the spectrum. 

Lastly, we discuss how our results relate to these previous works. Compared to \Cref{t:main1}, \eqref{globalfluctuations} identifies fluctuations in a complementary regime.
\Cref{t:main1} studies QUE fluctuations on local scales, $| I | \ll N$, while \eqref{globalfluctuations} studies the global scale 
and identifies the corrections coming from $\ell^2$ normalization. Additionally, there are significant differences in the hypotheses.
\Cref{t:main1} is stated for any matrix with variance structure \eqref{e:stochasticvar}, and applies to any eigenvector.
The result \eqref{globalfluctuations} requires that $\u_i$ be a bulk eigenvector and $H$ has identically distributed entries (the diagonal and off-diagonal entries may take different distributions). %Both results assume all moments are finite.

Regarding \Cref{t:main2}, we observe that \eqref{eth} gives the same bounds for sets $I$ satisfying $\vert I\vert \ge c N$ and $A$ taken from a specific type of observable. However, when $\vert I\vert \ll N$, \Cref{t:main2} gives a stronger bound for this choice of observable. This bound is optimal up to the $N^\eps$ factor and thus captures the true size of the fluctuations for QUE on all scales.

\subsection{Proof strategy}
The proof of \Cref{t:main1} is based on the dynamical approach to random matrix theory, which was introduced in \cite{erdos2011universality}. It consists of the following three steps; our novel contributions come in the second and third steps.

	\paragraph{{Step 1: Local law and rigidity.}} To begin our analysis, we require \emph{a priori} estimates on the eigenvalues and eigenvectors of Wigner matrices. The \emph{local semicircle law} controls the resolvent of a generalized Wigner matrix and identifies the spectral distribution on all scales asymptotically larger than the 	typical eigenvalue spacing. An important consequence is the \emph{rigidity} of eigenvalues, first derived in \cite{erdos2012rigidity}. It states that with high probability, all eigenvalues are simultaneously close to their classical (deterministic) locations. Another useful corollary is the high-probability delocalization bound $\| \u \|^2_\infty \le N^{-1 + \eps}$.

	\paragraph{{Step 2: Relaxation by Dyson Brownian motion.}}We next use the \emph{a priori} estimates on generalized Wigner matrices from the first step to study such matrices as they are evolved through time by a matrix Ornstein--Uhlenbeck process. The moments of the QUE observables in our main result belong to a class of \emph{perfect matching observables} of eigenvectors. The time evolution of the matrix induces a parabolic differential equation with random coefficients on these observables, the \emph{eigenvector moment flow}. 
	
	We aim to use the maximum principle to show that the moments of the QUE observables at times $t=N^{-\tau}$ for small $\tau>0$ relax to their equilibrium state, the corresponding Gaussian moments.
However, the estimates from the first step alone do not suffice to complete this argument. It is necessary to see that the eigenvectors of the matrix process decorrelate in a quantitative way at times $t = N^{-\tau}$, so that their correlation functions are much smaller than the bound obtained from $\ell^\infty$ delocalization. For this, we require a novel four-point decorrelation estimate (see \eqref{decor} below). To prove it, we introduce two new sets of symmetrized observables and study their relaxation to equilibrium using the maximum principle. We also remark that in the final analysis, $\tau$ must be chosen as a function of both $n$ and the parameter $\eps$ in the bound $| I | \le N^{1-\eps}$, so that $n\tau$ is small relative to $\eps$.

	\paragraph{{Step 3: Comparison.}} 
The previous step shows that the $n$th moment of the quantity in \Cref{t:main1} converges to the $n$th moment of a Gaussian for all matrices of the form $\sqrt{1 - t }  H + \sqrt{t} \GOE_N$, if $t \ll 1$ is chosen appropriately.
Using the fact that matrices with small additive Gaussian noise are ``dense'' in the space of all generalized Wigner matrices, we complete the argument by showing this convergence extends to any matrix satisfying \Cref{d:wigner}.
	
	Our approach here is based on the four-moment matching technique \cite{tao2010random}. Given a generalized Wigner matrix $H$, we use a standard result to exhibit a generalized Wigner matrix $\tilde H$ and time $t\ll 1$ such that $H$ and $\hat H = \sqrt{1 - t } \tilde H + \sqrt{t} \GOE_N$ have entries whose first four moments match, up to a small error. If the normalized eigenvector sums in \Cref{t:main1} were regular with respect to small perturbations in the matrix entries, it would be straightforward to transfer the moment convergence from $\hat H$ to $H$. However, because derivatives of eigenvector entries with respect to the matrix entries are highly singular,  this cannot be done directly. We instead construct a smoothed analogue of these sums, which closely approximates them but has better regularity. The derivatives of this substitute observable are controlled using \Cref{t:main2}, and the resulting estimates suffice to complete the moment matching. We discuss the details of the smoothed observable, and the proof of \Cref{t:main2}, below.

	\paragraph{} 
	
	We now discuss the details of the second and third steps. Our primary technical contribution in the second step is the proof of eigenvector correlation estimates for pairwise distinct entries, such as 
	
	\begin{equation}\label{decor}
	\E \left[ N^2\scp{\q_{\alpha_1}}{\u_k^s}\scp{\q_{\alpha_2}}{\u_k^s}\scp{\q_{\alpha_3}}{\u_k^s}\scp{\q_{\alpha_4}}{\u_k^s} \right] = \O{\frac{N^{\delta}}{N s^{3/2} }},
	\end{equation}
	for eigenvectors of the Ornstein--Uhlenbeck matrix dynamics at times $s \gg N^{-1/3}$ (see \Cref{p:emf1}). For times $s \approx 1$, this bound is order $\O{N^{-1 + \delta}}$, which improves the  $\O{N^{ \delta}}$ estimate given by $\ell^\infty$ delocalization bounds, and therefore captures the decorrelating influence of the dynamics. 
	A major obstacle to obtaining bounds like \eqref{decor} is that the evolution equations for general products of eigenvector entries under the eigenvector moment flow are not represented by a positive $L^\infty$ operator. Therefore, the maximum principle cannot be applied to deduce short-time equilibration, as it was in \cite{bourgade2013eigenvector} for products of even moments of entries. 
	Further, while general product observables were analyzed using the energy method in \cite{marcinek2020high}, the convergence rate given there is not strong enough to deduce \eqref{decor}.
	
	To prove eigenvector decorrelation, we introduce two systems of \emph{symmetrized} eigenvector moment observables containing sums of products of eigenvector entries averaged over symmetry groups. 
	These symmetries are chosen so that the observables satisfy parabolic evolutions, which permits the use of the maximum principle to give a short proof of optimal decorrelation bounds for large times. The first set of moment observables, given in Definition \ref{d:momobs1}, can be seen as an extension to pairwise distinct indices of the observables from \cite{bourgade2013eigenvector} and follow the same flow. The second set, given in Definition \ref{d:momobs2}, are signed and are used to partially relax the symmetries in the first set. They follow the Fermionic eigenvector moment flow introduced in \cite{benigni2021fermionic}. A limitation of this approach is that we cannot in general access expectations of individual products of eigenvector projections, but only expectations of averages over certain symmetries; the bound  \eqref{decor} is an exceptional case where control of a single product is possible. Fortunately, these symmetrized bounds suffice to complete our analysis of the dynamics of the perfect matching observables, and therefore complete the dynamical step of our argument.
	
	 We note that we do not use the ETH proved in \cite{erdHos2020eigenstate}, but instead rely entirely on symmetrized moment observables following parabolic equations and QUE bounds proved dynamically in \cites{bourgade2018random, benigni2021fermionic}. For this reason, we believe that our dynamical analysis can be refined to admit a much more general class of initial data by following similar generalizations for other eigenvector observables \cites{marcinek2020high,aggarwal2020eigenvector,bourgade2017eigenvector,bourgade2018random}. We also remark that the proof of short-time equilibration does not use the intricate energy method from \cite{marcinek2020high}, which played a crucial role in the proof of  \eqref{globalfluctuations} in \cite{cipolloni2021normal}, but only the maximum principle.
	 
	 \paragraph{} For the third step of the argument, the main task is to exhibit an appropriate regularized observable to which four-moment matching can be applied. We begin by reviewing the regularization of individual eigenvector entries, which was carried out in \cite{knowles2013eigenvector}. 
Let $\u_\ell$ be an eigenvector with corresponding eigenvalue $\lambda_\ell$. For any $\eta >0$, we have
\beq
| \scp{\q_\alpha}{\u_\ell} |^2 = 
\frac{\eta}{\pi} \int_{\R} \frac{| \scp{\q_\alpha}{\u_\ell} |^2\, \d E}{(E - \lambda_\ell)^2 + \eta^2 } \approx 
\frac{\eta}{\pi} \int_{I} \frac{| \scp{\q_\alpha}{\u_\ell} |^2\, \d E}{(E - \lambda_\ell)^2 + \eta^2 } \label{g1},
\eeq
where $I$ is an interval with length $|I| \gg \eta$ centered at $\lambda_\ell$, and we used that the Poisson kernel integrates to unity. 
Recall that the resolvent for $H$ is defined by $G = (H - z)^{-1}$ for $z \in \mathbb{H}$. Now suppose that $\eta$ is taken smaller than the typical size of the eigenvalue gap $\lambda_{\ell+1} - \lambda_\ell$. Then, with probability $1 - o(1)$,
\beq
\frac{\eta}{\pi} \int_{I} \frac{| \scp{\q_\alpha}{\u_\ell} |^2\, \d E}{(E - \lambda_\ell)^2 + \eta^2 } 
\approx
\frac{\eta}{\pi} \int_{I} \sum_j \frac{| \scp{\q_\alpha}{\u_j} |^2\, \d E}{(E - \lambda_j)^2 + \eta^2 }  =
\int_I \Im \scp{\q_\alpha}{G(E + \I \eta)\q_\alpha}\, \d E,\label{g2}
\eeq
where we used the fact that $\lambda_j$ is typically outside of $I$ for $j\neq \ell$ and therefore the corresponding term in the sum is negligible. 
Combining \eqref{g1} and \eqref{g2} shows that $| \u_\ell(\alpha)|^2$ can be written in terms of entries of $G$ evaluated at points with imaginary part slightly below the typical eigenvalue spacing. 
These can be effectively controlled using the estimates from the first step of the outline, and because derivatives of resolvents can be written in terms of their entries using the identity $\partial_{H_{ij}} G_{ab} = -G_{ai}G_{jb} - G_{aj}G_{ib}$, their derivatives are also straightforward to control. 

For the normalized sums of eigenvector entries $ | I |^{-1/2} \sum_{\alpha \in I} |\scp{\q_\alpha}{\u_i}|^2$ appearing in \eqref{t:main1}, it is natural to regularize the sum by replacing each term by its regularization in \eqref{g2}. However, it is not permissible to bound each term (or its derivative) in the sum individually. In order to capture the right order, it is necessary to see cancellation between each $\scp{\q_\alpha}{G\q_\alpha}$. 
For this, we note by \eqref{g1} that high-probability control on $\sum_{\alpha \in I } \scp{\q_\alpha}{G\q_\alpha}$ is equivalent to high probability control on $\sum_{\alpha \in I} | \scp{\q_\alpha}{\u_i}|^2$, which is provided by \Cref{t:main2}. Given this control, the standard four-moment matching argument can be applied to finish the proof of \Cref{t:main1}.

\paragraph{} Finally, we discuss the proof of \Cref{t:main2}, which also proceeds by the dynamical approach to random matrix theory.
Since the theorem was proved for matrices with Gaussian noise in \cite{benigni2021fermionic},
it suffices to complete the comparison step.
The argument is rather technical, so we mention only a few key points here and defer a detailed outline to the beginning of \Cref{s:quereg}.
There are three main ingredients. 
The first is the observation that if we define, for $k\neq \ell$,
\begin{equation}
p_{kk}  = \sum_{\alpha \in I } \scp{\q_\alpha}{\u_k}^2 - \frac{| I | }{N}, \qquad 
p_{k\ell} = \sum_{\alpha \in I} \scp{\q_\alpha}{\u_k}\scp{\q_\alpha}{\u_\ell},
\end{equation}
then
\begin{equation}\label{key2}
 \sum_{i,j }^N \frac{ \eta_k\eta_\ell   p^2_{ij} }{ \left( ( \lambda_i - E_1)^2 + \eta^2_k\right) \left( ( \lambda_j - E_2)^2 + \eta^2_\ell  \right) }
\end{equation}
may be written as a sum of products of resolvent entries; the exact expression is complicated and given as \eqref{key} below.\footnote{This expression is analogous to an observable used in \cite{cipolloni2021normal}; see \cite{cipolloni2021normal}*{(1.17)}.} This provides a single simultaneous regularization of all the quantities in \Cref{t:main2}, and by integrating over small intervals centered at $\lambda_k$ and $\lambda_\ell$, one can recover the individual $p_{k\ell}^2$ for any $k, \ell \in \unn{1}{N}$ .

The second main idea is bootstrapping on the scale of $I$. As with the observable \eqref{g2}, control over the $p_{k\ell}$ is necessary to control \eqref{key2} and its derivatives. But there is an apparent circularity here, since \Cref{t:main2} is what we intend to use to bound the $p_{k\ell}$, yet we need high-probability control over $p_{k\ell}$ in its proof. The way out is to induct on the size of $I$, increasing the range of $I$ over which we have control by a factor of $N^{\sigma}$ at each step for a small, fixed $\sigma>0$. By $\ell^\infty$ eigenvector delocalization bounds, if the $p_{k\ell}$ are bounded by $N^{\omega}$ for sets $I$ satisfying $| I | \le N^{\kappa}$ and all $k, \ell \in \unn{1}{N}$, then we have estimates of order $N^{\omega + \sigma}$ for sets $| I | \le N^{\kappa + \sigma}$. If $\sigma$ is small enough, the $N^{\kappa + \sigma}$ estimate and regularization \eqref{key2} are sufficient to complete the comparison and prove the improved $N^{\omega}$ bound holds for $| I | \le N^{\kappa + \sigma}$, justifying the induction step. For the base case, we take $I$ to be constant size, where the bounds in \Cref{t:main2} hold by eigenvector delocalization.

The third main idea is that, since we only need to bound the $p^2_{k\ell}$ from above, we do not need to restrict the comparison to the probability $1 - N^{-c}$ set where the regularization \eqref{key2} holds exactly, where eigenvalues abnormally close to $\lambda_k$ and $\lambda_\ell$ are excluded. Due to the positivity of the terms in \eqref{key2}, after integrating over the appropriate intervals it dominates $p^2_{k\ell}$ deterministically (compare \eqref{g2}). This is what permits the stronger $N^{-D}$ error in \Cref{t:main2}. The analogous observation for the regularization of a single eigenvector entry was used in \cite{benigni2020optimal} to prove optimal $\ell^\infty$ delocalization bounds.

We remark that it would not be possible to use the observable \eqref{key2} for the proof of \Cref{t:main1}, since it only identifies the absolute value of $p_{kk}$, and not its sign.

\subsection{Outline of the paper} \Cref{s:prem} states preliminary estimates from previous works. In \Cref{sec:dynamics}, we prove short-time relaxation of the eigenvector moment flow for perfect matching observables, assuming a key eigenvector decorrelation estimate. In \Cref{s:decorrelation}, we prove this estimate. 
\Cref{s:quereg} constructs a regularized observable that is necessary for the comparison step of our argument.
\Cref{s:t2} completes the proof of \Cref{t:main2}, and \Cref{s:t1} defines another regularized observable and completes the proof of \Cref{t:main1}.
\Cref{s:a1} provides bounds on the observables of the previous sections. 
\Cref{a:genmoments} provides a more general viewpoint on the symmetrized moment observables.
\paragraph{Acknowledgments.} We thank Paul Bourgade for helpful comments. We also thank Xiaoyu Xie for notifying us of several misprints in an earlier version of this work.

\section{Preliminaries}\label{s:prem}

For brevity, we write all statements and proofs for the real symmetric case of \Cref{d:wigner} only. The complex Hermitian case differs only in notation. The origin of the different $\beta$ in the Hermitian version of \Cref{t:main1} is the Hermitian analogue of \Cref{t:flow} quoted below, which can be found at the same reference given there.

%\ber
%For ease of notation, we develop the proof for $(\q_\alpha)_\alpha=(\mathbf{e}_\alpha)_\alpha$ to be the standard basis but we note that the proof stays the same as the isotropic local law $\scp{\q_\alpha}{G\q_\beta}$ and the entrywise local law $G_{\alpha\beta}$ give the same error bound for $\q_\alpha \perp \q_\beta$. [Say more here: we should cover changes to both dynamics and comparison in detail.]\eer

\subsection{Notation}

We write $X \ll Y$ if there exists a constant $c > 0 $ such that $N^c |X| \le Y$. %Constants in this paper may depend on the constant $c>0$ implicit in the claim $X \ll Y$, but we suppress this in the notation. 
We also write $X = \O{Y}$ if there exists a constant $C> 0$ such that $|X| \le C Y$. The constant $C$ will not depend on other parameters, unless this is explicitly mentioned. Set $\mathbb{H} = \{ z \in \C : \Im z > 0 \}$ and $\S^{N-1}=\{ \x \in \R^N : \|\x\|_2 = 1 \}$. Although the constants in all results below may depend on the constants in \Cref{d:wigner}, we will not notate this dependence explicitly.

Let $\matn$ be the set of $N\times N$ real symmetric matrices. We index the eigenvalues of matrices $M\in \matn$ in increasing order, so that $\lambda_1 \le \lambda_2 \le \dots \le \lambda_N$. 
For $z\in \mathbb{H}$, the resolvent of $M\in \matn$ is given by $G(z) = ( M - z \Id)^{-1}$, and the
Stieltjes transform of $M$ is
\begin{align}
m_N (z) = \Tr G(z) =  \frac{1}{N} \sum_i \frac{1}{ \lambda_i - z}.
\end{align}
The resolvent has the spectral decomposition
\beq\label{eq:defsresolv}
G (z) =
\sum_{i=1}^N
\frac{ \u_i  \u^*_i }{\lambda_i-z},
\eeq
where $\u_i$ is the eigenvector corresponding to the eigenvalue $\lambda_i$ of $M$ such that $\| \u_i \|_2 = 1$; we fix the sign of $\u_i$ arbitrarily by demanding that $\u_i (1) \ge 0$.

The semicircle distribution and its Stieltjes transform are
\begin{align}
\scrho (E) = \frac{\sqrt{ (4 - E^2)_+ }}{2 \pi }\, \d E, \qquad \msc (z) = \int_{\R} \frac{\scrho (x)\, \d x}{ x - z },
\end{align}
for $E \in \R$ and $z \in \mathbb{H}$.
For $i\in\unn{1}{N}$, we denote the $i$th $N$-quantile of the semicircle distribution by $\gamma_i$ and define it implicitly by
\beq\label{e:classical}
\frac{i}{N} = \int_{-2}^{\gamma_i} \scrho (x) \, \d x.
\eeq

 %We first define the resolvent for $z\in\mathbb{C}_+$,
%\beq\label{eq:defsresolv}
%G^s(z)
%=
%\sum_{k=1}^N
%\frac{\vert\u_k^s\rangle \langle\u_k^s\vert}{\lambda_k(s)-z}.
%\eeq
%Note that $G^s$ has the same distribution as the resolvent of $H_s$ defined by $(H_s- z \Id)^{-1}$.

\subsection{Dyson Brownian motion}\label{s:dbm}

The $N\times N$ real symmetric Dyson Brownian motion is the stochastic process $(H_s)_{0\le s \le 1}$  defined by 
\beq\label{e:dysondyn}
 \d H_s =  \frac{1}{\sqrt{N}} \d B(s) - \frac{1}{2} H_s \d s,
\eeq
 where $B(s) \in \matn$ is a symmetric matrix such that $B_{ij}(s)$ and $B_{ii}(s)/\sqrt{2}$ are mutually independent standard Brownian motions for $1 \le i < j\le N$.

The process $(H_s)_{0\le s \le 1}$ has the same distribution as $\left( \u^s \bm \lambda(s) (\u^s)^* \right)_{0 \le s \le 1}$, where $\bm \lambda(s) = ( \lambda_1(s), \dots, \lambda_N(s))\in \R^N$ and $\u^s = \left( \u^s_1, \dots, \u^s_N \right) \in \R^{N \times N}$ solve
\begin{align}
\label{eq:dysonval}\d \lambda_k(s) &= \frac{\d \wt{B}_{kk}(s)}{\sqrt{N}}
+\left(
	\frac{1}{N}
	\sum_{\ell\neq k}
	\frac{1}{\lambda_k(s)-\lambda_\ell(s)}
	 -\frac{\lambda_k(s)}{2}
\right)\d s, \\
\label{eq:dysonvect}\d \u^s_k &= \frac{1}{\sqrt{N}}\sum_{\ell\neq k}\frac{\d \wt{B}_{k\ell}(s)}{\lambda_k(s)-\lambda_\ell(s)}\u_l^s
-\frac{1}{2N}
\sum_{\ell\neq k}
\frac{\d t}{(\lambda_k(s)-\lambda_\ell(s))^2}\u_k^s,
\end{align}
with initial data $H_0 =\u^0\bm  \lambda(0) (\u^0)^*$, and $\wt{B}$ has the same distribution as $B$ \cite{bourgade2013eigenvector}*{Theorem 2.3}. 
%Using these SDEs, we define the stochastic processes $\bm{\lambda} = ( \bm{\lambda} (s) )_{0 \le s \le 1}$ and $\u = ( \u (s) )_{0 \le s \le 1}$.
We let $m_N^s(z)$ be the Stieltjes transform of $H_s$ and $G^s(z)$ be its resolvent.

\subsection{Initial estimates}

%Let $\mathbb{S}^{N-1} \subset \mathbb R^N$ be the set of vectors $\q \in \R^N$ such that $\| \q \|_2 =1$. 
For any $\omega >0$, we define
\beq \D_\omega  =  \{ z = E + \I\eta \in \mathbb C : |E| < \omega^{-1}, N^{-1+ \omega} \le \eta \le \omega^{-1}  \}. 
\eeq
Given $k \neq \ell \in \unn{1}{N}$, $I \subset \unn{1}{N}$, and a family of orthogonal vectors ${\q}=(\q_\alpha)_{\alpha\in I}$ with $\q_\alpha \in \S^{N-1}$ for all $\alpha \in I$, we set
\begin{equation}\label{pdef}
p_{kk} (s) = \sum_{\alpha \in I } \scp{\q_\alpha}{\u^s_k}^2 - \frac{| I | }{N}, \qquad 
p_{k\ell}(s) = \sum_{\alpha \in I} \scp{\q_\alpha}{\u^s_k}\scp{\q_\alpha}{\u^s_\ell}.
\end{equation}
We use the shorthand $p_{k\ell} = p_{k\ell}(0)$.
We also define the error parameter 
\begin{equation}\label{psis}
\Psi(s) = \frac{|I|}{N^{3/2} s^2}  + \sqrt{\frac{|I|}{N^2 s^3}}.
\end{equation}

\bel[\cite{bourgade2013eigenvector}*{Lemma 4.2}, 
\cite{benigni2021fermionic}*{Proposition 3.5}]\label{l:goodset}
Fix  $D, \omega >  0$, $I \subset \unn{1}{N}$, and a family of orthogonal vectors ${\q}=(\q_\alpha)_{\alpha\in I}$ in $\S^{N-1}$.
Let $H$ be a $N\times N$ generalized Wigner matrix, and define $H_s$, $G^s$, $\u^s_k$,  $m^s_N(z)$, $p_{kk}(s)$, and $p_{k\ell}(s)$ as above. Let $\mu$ be the measure on the space of joint eigenvalue and eigenvector trajectories  $(\bm \lambda(s), \u(s))_{0 \le s \le 1}$ induced by the Dyson Brownian Motion $(H_s)_{0\le s \le 1}$.
%, and denote the corresponding probability measure by $\P_{\mu}$.
%the event $\mathcal{A}(\q,\omega,\tau)$ defined below occurs with overwhelming probability uniformly in $\mathbf{q}$, in other words, 
Then there exists $N_0(D)> 0$ such that for $N \ge N_0$ we have
\beq\label{e:exphigh}
%\inf_{\q \in\mathbb{S}^{N-1}}
\P_\mu \left(
	\mathcal{A}(\omega,\q, I)
\right)
\geqslant 1- N^{-D},
\eeq
where $\mathcal{A}(\omega,\q, I)$ is the set of trajectories $(\bm \lambda(s), \u(s))_{0 \le s \le 1}$  where all of the following statements hold.

\begin{enumerate}
\item For all $z=E+\I \eta\in \D_\omega$,
\beq\label{e:sclaw}
\begin{gathered}
\sup_{s\in [0,1]}
\left\vert
	m^s_N(z)-\msc(z)
\right\vert
\leqslant
\frac{N^\omega}{N\eta}, \qquad
\sup_{i,j \in \unn{1}{N}} \sup_{s\in[0,1]} \left| G^s_{ij}(z) - \one_{i = j} \msc(z) \right| \le N^{\omega}\left(
	\sqrt{\frac{\Im \msc(z)}{N\eta}}
	+
	\frac{1}{N\eta}
\right),\\
\sup_{\alpha,\beta \in I}
\sup_{s\in[0,1]}\left\vert \scp{\q_\alpha}{G^s(z)\q_\beta}-\one_{\alpha=\beta}\msc(z)\right\vert\le N^{\omega}\left(
\sqrt{\frac{\Im \msc(z)}{N\eta}}
+
\frac{1}{N\eta}
\right).
\end{gathered}
\eeq
\item For all $k\in\unn{1}{N}$,
\beq\label{e:rigidity}
\begin{gathered}
\sup_{s\in[0,1]}
\left\vert
	\lambda_k(s)-\gamma_k
\right\vert
\leqslant
N^{-2/3+\omega}\left[\min(k, N - k +1) \right]^{-1/3}, \qquad
\sup_{s\in[0,1]} \sup_{j \in \unn{1}{N}}
 \u_k^s(j) ^2
\leqslant
N^{-1+\omega},\\
\sup_{s\in[0,1]}\sup_{\alpha\in I}\,\scp{\q_\alpha}{\u_k^s}^2\leqslant N^{-1+\omega}.
\end{gathered}
\eeq

\item
For any $s \in [N^{-1/3 + \omega}, 1]$, 
\begin{equation} \label{e:que}
\sup_{k,l \in \unn{1}{N}} | p_{k\ell}(s) | \le N^\omega \Psi(s).
\end{equation}
\end{enumerate}
\eel

\section{Relaxation by Dyson Brownian motion}\label{sec:dynamics}

\subsection{Eigenvector moment flow}\label{subsec:emf}

Let $H$ be a generalized Wigner matrix, and let $H_s$ be Dyson Brownian motion \eqref{eq:dysonval} with initial condition $H_0 = H$. 
It was proved in \cite{bourgade2018random} that certain products of the $p_{k \ell}(s)$ observables for $H_s$ obey parabolic equations, which may be described in terms of particle configurations on $\unn{1}{N}$. We begin by reviewing these dynamics.

Let $\{ (i_1, j_1), \dots , (i_m, j_m)\}$ be an index set with pairwise distinct $i_k \in \llbracket 1, N \rrbracket$ and positive $j_k \in \N$. We associate with such a set the vector $\bm \xi = (\xi_1, \xi_2, \dots, \xi_N) \in \mathbb N^N$ with $\xi_{i_k} = j_k$ for $1 \le k \le m$ and $\xi_p =0$ for $p \notin \{ i_1, \dots, i_m\}$. We view $\bm \xi$ as a particle configuration on $\unn{1}{N}$, with $j_k$ particles at site $i_k$ for all $k$ and zero particles on the sites not in $\{ i_1, \dots, i_m\}$. 
The configuration $\bm \xi^{ij}$ is defined by moving one particle in $\bm \xi$ from $i$ to $j$, if this is possible. More precisely, if $i \ne j$ and $\xi_i >0$, then $\xi^{ij}_k$ equals $\xi_k + 1$ if $k=j$, $\xi_k - 1$ if $k=i$, and  $\xi_k$ if $k \notin \{ i, j \}$. When $\xi_i =0$, set $\bm \xi^{ij} = \bm \xi$.

For any configuration $\bm{\xi}$, we define the vertex set
\begin{equation}\label{mathcalV}
\mathcal{V}_{\bm \xi} = \left\{ (i,a) : 1 \le i \le n , 1 \le a \le 2 \xi_i \right\}.
\end{equation}
We let $\mathcal G_{\bm \xi}$ be the set of perfect matchings of the complete graph on $\mathcal V_{\bm \xi}$. For any edge 
$e  = \{ (i_1, a_1), (i_2, a_2) \}$ of a graph $G \in \mathcal{G}_{\bm \xi}$, we set $p(e) = p_{i_1 i_2}(s)$ and $P_s(G) = \prod_{e \in \mathcal E (G) } p(e)$, where $\mathcal E (G)$ denotes the edge set of $G$.

\begin{figure}[!ht]
	\centering
	\begin{subfigure}[t]{.5\textwidth}
		\centering
		\includegraphics[width=.8\linewidth]{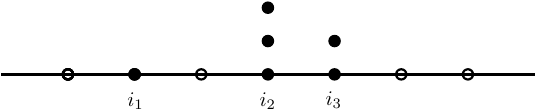}
		\caption{A configuration $\bm{\xi}$ with $\mathcal{N}(\bm{\xi})=6$ particles.}
	\end{subfigure}%
	\hspace{-0em}\begin{subfigure}[t]{.5\linewidth}
		\centering
		\includegraphics[width=.8\linewidth]{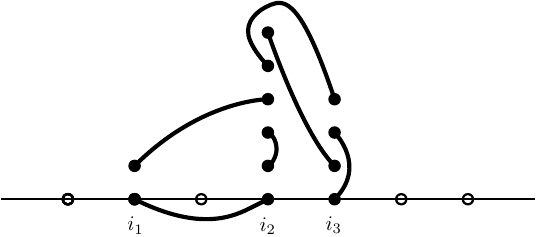}
		\captionof{figure}{\raggedright An example of a perfect matching \hbox{$G\in\mathcal{G}_{\bm{\xi}}$} with \hbox{$P(G)=p_{i_1i_2}^2p_{i_2i_2}p_{i_2i_3}^2p_{i_3i_3}$}.}
	\end{subfigure}
\end{figure}

We now define the perfect matching observables $f_s\colon \N^N \rightarrow \R$.
Given $I \subset \unn{1}{N}$ and a path $\bm \lambda = (\bm \lambda(s))_{0\le s \le1}$ of eigenvalues of $H_s$, we set
\beq
\label{eq:deffs}
f_{s}(\bm{\xi})  = f_{\bm \lambda,s}(\bm{\xi}) 
=
\frac{1}{\M(\bm{\xi})}
\E\left[
	\sum_{G \in \mathcal{G}_{\bm \xi}} P_s(G)
	\;\middle\vert\;
	\bm{\lambda}
\right],
\quad\text{where}\quad
\M(\bm{\xi})
=
\prod_{k=1}^N (2\xi_k -1 )!!.
\eeq
The eigenvectors in the definition of $P_s(G)$ are defined using the eigenvectors $\u^s$ of $H_s$.
%The normalization factor $\mathcal M(\bm \xi)$  is chosen because it equals the cardinality of $\mathcal{G}_{\bm \xi}$.

The time evolution of the observable $f_s(\bm{\xi})$ is given by the parabolic equation 
% known as the eigenvector moment flow, which is 
in the following theorem.
\bet[\cite{bourgade2018random}*{Theorem 2.6}]\label{t:flow}
For all $s\in (0,1)$, the perfect matching observable $f_s$ defined in \eqref{eq:deffs} satisfies the equation
\beq\label{eq:emf}
\partial_s f_s(\bm{\xi})
=
\sum_{k\neq \ell}
2\xi_k(1+2\xi_\ell)
\frac{f_s(\bm{\xi}^{k,\ell})-f_s(\bm{\xi})}{N(\lambda_k(s)-\lambda_\ell(s))^2}.
\eeq
\eet

Finally, we define $\mathcal A_1 (\omega,\q, I)$ to be the set of paths $\bm \lambda = (\bm \lambda(s))_{0 \le s \le 1}$ such that the statements  \eqref{e:sclaw}, \eqref{e:rigidity}, and \eqref{e:que} hold with probability at least 
$ 1 - N^{-D}$
with respect to the marginal distribution on paths $\bm \lambda$ induced by the measure $\P_\mu$ defined in \Cref{l:goodset}. 
By \Cref{l:goodset} and Fubini's theorem,
\beq\label{e:Croothigh}
\P_\mu\left( \mathcal A_1 (\omega,\q, I) \right) \ge 1 - N^{-D}.
\eeq

\subsection{Gaussian divisible ensembles}
We now use the flow \eqref{eq:emf} to obtain a bound on the observable $f_s(\bm{\xi})$ for arbitrarily large configurations $\bm \xi$ and times $s$ close to $1$. To analyze the dynamics, we require the following decorrelation estimate, which is proved in \Cref{s:decorrelation}.
\bel\label{l:decorrelation}
Fix $D, \eps, \omega > 0$ and $n \in \N$, and let $I \subset \unn{1}{N}$ satisfy $N^\eps \le | I | \le N^{1 - \eps}$. Let ${\q}=(\q_\alpha)_{\alpha\in I}$ be orthogonal vectors with $\q_\alpha \in \S^{N-1}$ for all $\alpha \in I$. Let 
\beq
Q_s = \prod_{i=1}^{n} p_{a_i b_i}
\eeq
be a product of $n$ observables defined by \eqref{pdef} for some $a_i, b_i \in \unn{1}{N}$. 
Then there exist $C(D, n, \eps, \omega)>0$ and an event $\mathcal{A}_2(\eps,\omega,\q, I)\subset \mathcal{A}_1(\omega,\q,I)$ such that the following holds. We have $\P(\mathcal{A}_2(\eps, \omega,\q,I))\geqslant 1-N^{-D}$, and
for every $\bm \lambda \in \mathcal A_2$, $s \in [ N^{-1/3 + \eps}, 1]$, and $j,k\in \unn{1}{N}$,
\begin{multline}\label{e:keyestimate}
\left|\sum_{\alpha\neq\beta\in I} \E \left[ \scp{\q_\alpha}{\u^s_j} \scp{\q_\beta}{\u^s_k} \Im\scp{\q_\alpha}{G^s(z)\q_\beta} Q_s | \bm \lambda\right] \right|\\ \le C N^{n\omega}\Psi^{n}(z)\left(\frac{\vert I\vert^2}{N^2}\left(\frac{1}{\eta}+\frac{\sqrt{s}}{\eta^2}+\frac{N^{2\omega}}{s^{3/4}\sqrt{\eta}}\right)+\frac{\vert I\vert^{3/2} N^{2\omega}}{N^{3/2}\sqrt{\eta}}\right).
\end{multline}
\eel

\bep\label{p:highmom}
Fix $\eps >0$ and $n \in \N$. There exist $\theta (n, \eps ) >0$ and $N_0(n, \eps) >0$ for which the following holds.
For all $N \ge N_0$,
\beq\label{e:dynamicsclaim}
\sup_{I} \sup_{\bm \xi: | \bm \xi| = n} \sup_{t \in [ N^{-\theta}, 1]}   \left| \left(\frac{N}{\sqrt{|I|}}\right)^{n}\E\left[  f_{\bm \lambda,s}(\bm{\xi})  \right]   - 2^n\E\left[g^n\right]  \right| \le N^{-\theta} ,
\eeq
where the supremum on $I$ is taken over coordinate sets $I \subset \unn{1}{N}$ with $N^\eps \le | I | \le N^{1 - \eps}$. Here $g$ is a real Gaussian random variable with mean $0$ and variance $1$.
\eep

\begin{proof}

For $m \in \N$, we define $\ximax = \ximax^{(m)}(s)$ to be the maximizer among all configurations with $m$ particles for the perfect matching observable \eqref{eq:deffs}:
\beq
f_s(\ximax) 
=
\sup_{\bm{\xi}:\,\vert\bm{\xi}\vert=m}
f_s(\bm{\xi}).
\eeq
If the supremum is attained by multiple configurations $\bm \xi$, we select one arbitrarily, subject to the requirement that $\widetilde{\bm \xi} (s)$ remains piecewise constant in $s$.  We denote the number of particles in $\widetilde {\bm \xi}$ at site $i$ by $\widetilde \xi_i$. 
We let $(k_1,\dots,k_p)$ be the sites where $\ximax$ has at least one particle;
these are the indices $k_i \in \unn{1}{N}$ such that $\widetilde \xi_{k_i}>0$. We let the implicit constants in each occurrence of the $\mathcal O$ notation in this proof depend on $m$; however, they will be independent of all other parameters in the statement of the proposition.

We begin by deriving a differential inequality for $f_s(\ximax^{(m)}(s))$
for any number of particles $m \le n$. 
Let $\omega>0$ be a small parameter, which will be fixed later. 
Fix some path $\bm \lambda \in \mathcal A_2 (\eps, \omega,\q, I)$, which was defined in \Cref{l:decorrelation}. It is straightforward to show that there exists a countable subset $\mathcal C = \mathcal C( H_0, \bm \lambda) \subset [0,1]$ such that the continuous function $f_s(\ximax)$ is differentiable on $[0,1] \setminus \mathcal C$ (see the proof of \cite{aggarwal2020eigenvector}*{Lemma 6.10}). Using the flow \eqref{eq:emf}, we see that for any $\eta>0$ and $s \in [0,1]\setminus \mathcal C$, 
\begin{align}\label{eq:bounddyn0}
\partial_s f_s(\ximax)
&=
\sum_{i=1}^p \sum_{\ell\neq k_i}
2\widetilde\xi_{k_i}(1+2\widetilde\xi_{\ell})
\frac{f_s(\ximax^{k_i,\ell})-f_s(\ximax)}{N(\lambda_{k_i}(s)-\lambda_\ell(s))^2}\\
&\leqslant
\frac{2}{N\eta}
\sum_{i=1}^p \sum_{\ell\neq k_i}
\left(f_s(\ximax^{k_i,\ell})-f_s(\ximax)\right)\frac{\eta}{(\lambda_{k_i}(s)-\lambda_\ell(s))^2+\eta^2}\\
&=\frac{2}{\eta} \sum_{i=1}^p
\Im
\sum_{\ell\neq k_i}
\frac{f_s(\ximax^{k_i,\ell})}{N(\lambda_\ell(s)-z_{k_i})} 
 - 
 \frac{2}{\eta}\sum_{i=1}^p
\Im
\sum_{\ell\neq k_i}
\frac{f_s(\ximax) }{N(\lambda_\ell(s)-z_{k_i})}.\label{e:secondtermdynamics}
\end{align}
In the last line, we set $z_{k_i}=\lambda_{k_i}+\I\eta$. In the inequality, we used the fact that $f_s(\ximax^{k_i,\ell})\leqslant f_s(\ximax)$ by the definition of $\ximax$. We also used that $\widetilde\xi_\ell \ge 0$ for all $l \in \llbracket 1 , N \rrbracket$, and $\widetilde\xi_{k_i}>0$.

We bound the second term in \eqref{e:secondtermdynamics} by
\begin{align}
\sum_{i=1}^p
\Im
\sum_{\ell\neq k_i}
\frac{f_s(\ximax)}{N(\lambda_\ell(s) - z_{k_i})}
&=
f_s(\ximax)
\left(
\sum_{i=1}^p
\Im m_N^s(z_{k_i})
-\frac{1}{N\eta}\right)
\\
&= 
f_s(\ximax)\left(
\sum_{i=1}^p
\Im \msc(z_{k_i})
+
\O{\frac{N^\omega}{N\eta}}\right),\label{eq:bounddyn1}
\\
&=\label{e:314}
f_s(\ximax)\sum_{i=1}^p
\Im \msc(z_{k_i})
+ \O{ \frac{N^{(m+1)\omega} \Psi^m(s)}{N\eta}}.
\end{align}
We used \eqref{e:sclaw} in \eqref{eq:bounddyn1}, and \eqref{e:que} in the last equality.

For the first term in \eqref{e:secondtermdynamics}, we use \eqref{e:que} to see that
\beq\label{e:316}
\sum_{i=1}^p
\Im
\sum_{\ell\neq k_i}
\frac{f_s(\ximax^{k_i,\ell})}{N(\lambda_\ell(s)-z_{k_i})}
=
\sum_{i=1}^p
\Im
\sum_{\ell \neq k_1,\dots,k_p}
\frac{f_s(\ximax^{k_i,\ell})}{N(\lambda_\ell(s)-z_{k_i})}
+
\O{
\frac{ N^{n \omega} \Psi^m(s)}{N\eta}
}.
\eeq

For the first term in \eqref{e:316}, fix an index $i\in\unn{1}{p}$ and some $\ell \notin \{k_1,\dots,k_p\}$; note that $\ximax^{k_i,\ell}$ has a unique particle on site $\ell$. We divide the sum in the definition of $f_s(\ximax^{k_i,\ell})$ into three parts depending on the structure of the graph $\mathcal{G}_{\ximax^{k_i,\ell}}$: there is an edge between the two vertices corresponding to $\ell$, there are two edges from the vertices on $\ell$ connected to vertices on the same site $k_j$, and there are two edges from the vertices on $\ell$ going to different sites $k_{j_1}$ and $k_{j_2}$. Let $\mathcal{G}^{(1)}_{\ximax^{k_i,\ell}} \subset \mathcal{G}_{\ximax^{k_i,\ell}}$ be the subset of graphs corresponding to the first case, and define $\mathcal{G}^{(2)}_{\ximax^{k_i,\ell}} $ and $\mathcal{G}^{(3)}_{\ximax^{k_i,\ell}}$ similarly.

\begin{figure}[!ht]
	\centering
	\begin{subfigure}[t]{.3\textwidth}
		\centering
		\includegraphics[width=.8\linewidth]{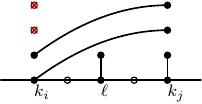}
		\caption{$G\in \mathcal{G}^{(1)}_{\ximax^{k_i,\ell}}$ }
	\end{subfigure}%
	\hspace{-0em}\begin{subfigure}[t]{.3\linewidth}
		\centering
		\includegraphics[width=.8\linewidth]{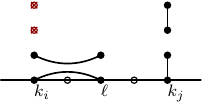}
		\captionof{figure}{$G\in \mathcal{G}^{(2)}_{\ximax^{k_i,\ell}}$}
	\end{subfigure}
	\hspace{-0em}\begin{subfigure}[t]{.3\linewidth}
		\centering
		\includegraphics[width=.8\linewidth]{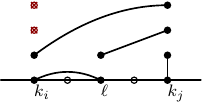}
		\captionof{figure}{$G\in \mathcal{G}^{(3)}_{\ximax^{k_i,\ell}}$}
\end{subfigure}
\caption{Examples of the three types of graphs.}
\end{figure}

We begin by considering the observable 
\beq
f^{(1)}_s(\ximax^{k_i,\ell}) = 
\frac{1}{ \mathcal M(\ximax^{k_i,\ell}) }
\sum_{ G \in\mathcal{G}^{(1)}_{\ximax^{k_i,\ell} }}  \E [ P_s(G) | \bm \lambda]
=  \frac{1}{ \mathcal{M}(\ximax \setminus \{ k_i \}) } 
\sum_{ G \in\mathcal{G}_{\ximax \setminus \{ k_i \}} }\E [ p_{\ell \ell} P_s(G) | \bm \lambda ],
\eeq
where $\ximax\setminus \{ k_i \}$ denotes the configuration $\ximax$ with a particle removed from site $k_i$. Note that we have $ \mathcal M(\ximax^{k_i,\ell})= \mathcal{M}(\ximax \setminus \{ k_i \})$ from the condition that $\ell\not\in\{k_1,\dots,k_p\}$ so that $(2\ximax^{k_i,\ell}(\ell)-1)!!=1$.  We have
\beq
\Im
\sum_{\ell \neq k_1,\dots,k_p}
\frac{f^{(1)}_s(\ximax^{k_i,\ell})}{N(\lambda_\ell(s)-z_{k_i})}
= \frac{1}{ \mathcal{M}(\ximax \setminus \{ k_i \})}
\sum_{ G \in\mathcal{G}_{\ximax \setminus \{ k_i \}} }
\E \left[
\sum_{\ell \neq k_1,\dots,k_p}
\frac{p_{\ell \ell}}{N ( \lambda_\ell - z_{k_i} )} P_s(G)
\;\middle\vert\; \bm \lambda \right],
\eeq
and using the definition of $p_{\ell \ell}$ we deduce
\begin{align}
\sum_{\ell \neq k_1,\dots,k_p}
\frac{p_{\ell \ell}}{N ( \lambda_\ell - z_{k_i} )}
&=
\sum_{\ell=1}^N
\frac{p_{\ell\ell}}{ N ( \lambda_\ell - z_{k_1})}
+ \O{\frac{  N^\omega \Psi(s)}{N\eta}}
\\
&= \frac{|I|}{N} \Im \left( \frac{1}{|I|} \sum_{\alpha \in I}  \langle \q_\alpha, G(z_{k_i} ) \q_\alpha \rangle - \msc(z_{k_i}) \right) + 
\O{\frac{  N^\omega \Psi(s)}{N\eta}}\\
&=
\O{ \frac{|I| N^\omega}{N \sqrt{N\eta} } }
+
\O{\frac{ N^\omega \Psi(s)}{N\eta}},
\end{align}
where the last equality follows from the local law \eqref{e:sclaw}. We conclude using \eqref{e:que} that 
\beq\label{f1}
\Im
\sum_{\ell \neq k_1,\dots,k_p}
\frac{f^{(1)}_s(\ximax^{k_i,\ell})}{N(\lambda_\ell(s)-z_{k_i})}
=
\O{
\frac{|I| N^{m\omega} \Psi^{m-1}(s)}{N \sqrt{N\eta} } + 
\frac{ N^{m\omega} \Psi^m(s)}{N\eta} }.
\eeq
We next consider
\begin{align}
f^{(2)}_s(\ximax^{k_i,\ell}) &= 
\frac{1}{ \mathcal M(\ximax^{k_i,\ell}) }
\sum_{ G \in\mathcal{G}^{(2)}_{\ximax^{k_i,\ell} }}  \E [ P_s(G) | \bm \lambda]\\
&= \label{combo1}
\sum_{j=1}^p \frac{2\ximax^{k_i,\ell}(k_j) (2\ximax^{k_i,\ell}(k_j) -1  ) }{ \mathcal{M} (\ximax^{k_i,\ell}) } 
\E \left[ p^2_{k_j\ell}
\sum_{ G \in\mathcal{G}_{\ximax \setminus \{ k_i, k_j \}} }
 P_s(G)  \;\middle\vert\;  \bm \lambda \right]
\\
&=\label{combo3} \sum_{j=1}^p  2\ximax^{k_i,\ell}(k_j) 
\E \left[
p^2_{k_j, \ell} \frac{1}{\mathcal{M} ( \ximax \setminus \{k_i,k_j \} )}
\sum_{G \in\mathcal{G}_{\ximax \setminus \{ k_i, k_j \}}}
P_s(G)
\;\middle\vert\; 
\bm \lambda
\right].
\end{align}
The combinatorial factor in \eqref{combo1} comes from counting the number of ways two edges from site $\ell$ can connect to vertices on site $k_j$. Motivated by \eqref{e:316}, we want to understand
\begin{equation}
\Im \sum_{\ell \neq k_1, \dots , k_p} 
\frac{p^2_{k_j \ell} }{N ( \lambda_\ell - z_{k_i}) }
=
\Im \sum_{\ell=1}^N
\frac{p^2_{k_j \ell} }{N ( \lambda_\ell - z_{k_i}) }
+
\O{ \frac{  N^{2\omega} \Psi^2(s) }{N\eta}}.
\end{equation}
This can be written in terms of the resolvent as
\begin{align}
\Im \sum_{\ell =1}^N \frac{p^2_{k_j\ell}}{N (\lambda_\ell - z_{k_1} )}
=& \Im \sum_{\ell=1}^N \sum_{\alpha, \beta \in I } \frac{ \scp{\q_\alpha}{\u_{k_j}} \scp{\q_\beta}{\u_{k_j}} \scp{\q_\alpha}{\u_{\ell}} \scp{\q_\beta}{\u_{\ell}} }{N(\lambda_\ell - z_{k_i})}
\\
=&\frac{1}{N}\sum_{\alpha, \beta \in I }  \scp{\q_\alpha}{\u_{k_j}} \scp{\q_\beta}{\u_{k_j}} \Im \scp{\q_\alpha}{G(z_{k_i})\q_\beta}
\\
=&
\frac{1}{N} \sum_{\alpha \in I} \scp{\q_\alpha}{\u_{k_j}}^2  \Im \scp{\q_\alpha}{G(z_{k_i})\q_\alpha}
+ \frac{1}{N} \sum_{\alpha \neq \beta} \scp{\q_\alpha}{\u_{k_j}} \scp{\q_\beta}{\u_{k_j}}
 \Im \scp{\q_\alpha}{G(z_{k_i})\q_\beta}\\
=&\label{blahr}
\frac{|I|}{N^2} \Im \msc(z_{k_i})
+ \frac{1}{N} \Im \msc(z_{k_i}) p_{k_j k_j}\\
+& \frac{1}{N} \sum_{\alpha \neq \beta \in I} \scp{\q_\alpha}{\u_{k_j}} \scp{\q_\beta}{\u_{k_j}}
 \Im \scp{\q_\alpha}{G(z_{k_i})\q_\beta}
+ \O{ \frac{N^{2\omega}|I|}{N^2 \sqrt{N\eta}}},
\end{align}
where in \eqref{blahr} we used \eqref{e:sclaw}. Combining this with \eqref{combo3} gives 
\begin{multline}\label{f2}
\Im
\sum_{\ell \neq k_1,\dots,k_p}
\frac{f_s^{(2)}(\ximax^{k_i,\ell})}{N(\lambda_\ell(s)-z_{k_i})}
= \frac{|I|}{N^2} \Im \msc(z_{k_i}) \sum_{j=1}^p 
2\left( \ximax \setminus \{ k_i \} \right)(k_j) f_s(\ximax \setminus \{k_i, k_j\} )
\\
+
 \O{\frac{ N^{m \omega} \vert I\vert^{3/2}\Psi^{m-2}(s)}{N^{5/2} \sqrt{\eta}}
+ \frac{N^{m\omega} \Psi^{m}(s)}{N\eta}
+\frac{N^{(m-1)\omega} \Psi^{m-1}(s)}{N}
+\frac{|I|  N^{m \omega} \Psi^{m-2}(s)}{N^2 \sqrt{N\eta}}
}.
\end{multline}
In this calculation, we used \eqref{e:keyestimate} to bound 
\begin{multline}
\sum_{j=1}^p \frac{2 \ximax^{k_i,\ell}(k_j) }{\mathcal{M}( \ximax \setminus \{k_i,k_j \}}
\sum_{G \in\mathcal{G}_{\ximax \setminus \{ k_i, k_j \}}}
\frac{1}{N}\sum_{\alpha\neq \beta\in I}
\E \left[ \scp{\q_\alpha}{\u_{k_j}} \scp{\q_\beta}{\u_{k_j}}  \Im \scp{\q_\alpha}{G(z_{k_i})\q_\beta} P_s(G) | \bm \lambda\right] 
\\= \O{\frac{ N^{m \omega} \vert I\vert^{3/2}\Psi^{m-2}(s)}{N^{5/2} \sqrt{\eta}} }.
\end{multline}
Note that we only used the last error from Lemma \ref{l:decorrelation} since this is the largest one with our choice of parameters.

Our final observable, $f^{(3)}_s(\ximax^{k_i,\ell})$, contains the graphs in $\mathcal{G}^{(3)}_{\ximax^{k_i,\ell} }$, where the two edges going from the site $\ell$ connect to two distinct sites. 
Since we perform perfect matchings after doubling the number of particles, we see that there exist $r \ge 3$ and $j_1, \dots , j_r \in \{ 1, \dots , p \}$ pairwise distinct such that there exist edges in the matching corresponding to
$p_{\ell k_{j_1}} p_{k_{j_1}k_{j_2}}  \dots p_{k_{j_{r-1}}k_{j_r}} p_{k_{j_r} \ell}$.
\begin{figure}[!ht]
	\centering
	\includegraphics[width=.3\textwidth]{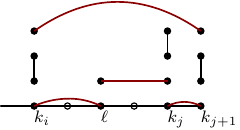}
	\caption{A configuration where a path $p_{\ell k_i}p_{k_ik_{j+1}}p_{k_{j+1}k_{j}}p_{k_j\ell}$ is illustrated in red.}
\end{figure}
At each site, choosing the two sites corresponding to this path gives a combinatorial term of 
\begin{equation}\label{e:combo2}
\prod_{p=1}^r 2 \left(\ximax \setminus \{k_i\}\right)(k_i) 
\left( 2\left(\ximax \setminus \{ k_i \} \right)(k_{j_q}) - 1 \right).
\end{equation}
Note that this quantity represents an upper bound on the cardinality of $\mathcal{G}^{(3)}_{\ximax^{k_i,\ell} }$; it overcounts
because the same configuration of particles can have two such paths $p_{\ell k_{j_1}} p_{k_{j_1}k_{j_2}}  \dots p_{k_{j_{r-1}}k_{j_r}} p_{k_{j_r} \ell}$. However, this crude bound suffices to show these graphs are negligible.

Using \eqref{e:combo2}, we thus want to consider
\begin{multline}
f^{(3)}_s(\ximax^{k_i,\ell}) = 
\frac{1}{ \mathcal M(\ximax^{k_i,\ell}) }
\sum_{ G \in\mathcal{G}^{(3)}_{\ximax^{k_i,\ell} }}  \E [ P_s(G) | \bm \lambda]\\
= 
\sum_{r=3}^p
\sum_{\substack{j_1\dots j_r\\ \text{pairwise }\neq}}^p
\O{
\frac{ \prod_{q=1}^r 2 \left( \ximax \setminus \{k_i\} \right) ( k_{j_q})}{ \mathcal{M} (\ximax \setminus \{k_i, k_{j_1}, \dots , k_{j_r}   \}) }
} 
\E \left[ p_{\ell k_{j_1}} p_{k_{j_1}k_{j_2}}  \dots p_{k_{j_{r-1}}k_{j_r}} p_{k_{j_r} \ell}
\sum_{ G \in\mathcal{G}_{\ximax \setminus \{ k_i, k_{j_1} \dots k_{j_r} \}} }
 P_s(G)  \;\middle\vert\;  \bm \lambda \right],
\end{multline}
where the combinatorial factor is derived using \eqref{e:combo2}.
As before, we consider
\begin{equation}
\Im \sum_{\ell \neq k_1, \dots , k_p} 
\frac{p_{\ell k_{j_1}} p_{k_{j_r} \ell} }{N ( \lambda_\ell - z_{k_i}) }
=
\Im \sum_{\ell=1}^N
\frac{p_{\ell k_{j_1}} p_{k_{j_r} \ell}}{N ( \lambda_\ell - z_{k_i}) }
+
\O{ \frac{ N^{2\omega} \Psi^2(s) }{N\eta}},
\end{equation}
and following  \eqref{blahr} we obtain
\begin{align}
\Im\sum_{\ell =1}^N\frac{p_{\ell k_{j_1}} p_{k_{j_r} \ell}}{N ( \lambda_\ell - z_{k_i}) }
&= \frac{1}{N}p_{k_{j_1}k_{j_r}}\Im \msc(z_{k_i})
\\ &+\frac{1}{N} \sum_{\alpha \neq \beta \in I} \scp{\q_\alpha}{\u_{k_{j_1}}} \scp{\q_\beta}{\u_{k_{j_r}}}
\Im G_{\alpha\beta}(z_{k_i}) + \O{ 
\frac{N^{2\omega}|I|}{N^2 \sqrt{N\eta}}
+ \frac{N^\omega \Psi(s)}{N}
},
\end{align}
where we used \eqref{e:que} to bound the term $p_{k_{j_1} k_{j_2}}$ appearing in this calculation.
Then \eqref{e:keyestimate} yields
\begin{multline}\label{f3}
\Im
\sum_{\ell \neq k_1,\dots,k_p}
\frac{f^{(3)}_s(\ximax^{k_i,\ell})}{N(\lambda_\ell(s)-z_{k_i})}\\
=
 \O{\frac{ N^{m \omega} \vert I\vert^{3/2}\Psi^{m-2}(s)}{N^{5/2} \sqrt{\eta}}
+ \frac{N^{m\omega} \Psi^{m}(s)}{N\eta}
+\frac{N^{(m-1)\omega} \Psi^{m-1}(s)}{N}
+\frac{N^{m\omega}|I|\Psi^{m-2}(s)}{N^2 \sqrt{N\eta}}
}.
\end{multline}
We now insert our estimates \eqref{e:314}, \eqref{e:316}, \eqref{f1}, \eqref{f2}, and \eqref{f3} into \eqref{e:secondtermdynamics}. This gives
\begin{align}
\partial_s f_s(\ximax)
&\le 
- \frac{2}{\eta} \sum_{i=1}^p
\Im \msc(z_{k_i}) \left(f_s(\ximax)
-
 \frac{|I|}{N^2}  \sum_{j=1}^p 
2\left( \ximax \setminus \{ k_i \} \right)(k_j) f_s(\ximax \setminus \{k_i, k_j\} )
\right)
\\
&+ \O{\frac{ N^{m \omega} \vert I\vert^{3/2}\Psi^{m-2}(s)}{N^{5/2} \sqrt{\eta}}
+ \frac{N^{m\omega} \Psi^{m}(s)}{N\eta}
+\frac{N^{(m-1)\omega} \Psi^{m-1}(s)}{N}
+\frac{|I|  N^{2 \omega} \Psi^{m-2}(s)}{N^2 \sqrt{N\eta}}
}\\
& + \O{
 \frac{|I| N^{m\omega} \Psi^{m-1}(s)}{N \sqrt{N\eta} } + 
\frac{ N^{m\omega} \Psi^m(s)}{N\eta}
+ \frac{N^{(m+1)\omega} \Phi^m(s)}{N\eta}
+ \frac{N^{(m+1)\omega} \Phi^m(s)}{N\eta}
}.
\end{align}

With $\theta$ as a parameter, we now suppose that $s \ge N^{-\theta}/2$ and set $\eta = N^{-2\theta -\omega}$ (so that $s \gg \sqrt{\eta}$, needed later). If $\theta$ is chosen small enough relative to $\eps$, the second term in the definition of $\Psi(s)$ dominates.
We now fix $\theta$ and $\omega$ small enough, in a way that depends only on $n$ and $\eps$, so that we have
\begin{align}\label{e:diffeq}
\partial_s f_s(\ximax)
&\le 
- \frac{2}{\eta} \sum_{i=1}^p
\Im \msc(z_{k_i}) \left(f_s(\ximax)
-
 \frac{|I|}{N^2}  \sum_{j=1}^p 
2\left( \ximax \setminus \{ k_i \} \right)(k_j) f_s(\ximax \setminus \{k_i, k_j\} )
\right)
\\&+  C N^{-\eps/2} \left( \frac{\sqrt{|I|}}{N} \right)^{m},
\end{align}
where $C(m)>0$ is a constant.

Using \eqref{e:diffeq}, we proceed by induction on the particle number $m$.
Consider the set of times $t_k = t  - \left(n - k \right) N^{\omega/2} \eta^{1/2}$ for $0 \le k \le  n$. 
Our induction hypothesis at step $m$ is that there exists $N_0(m, \eps)>0$ such that for all $\ell \le m$ and $N \ge N_0$,
\beq\label{e:inducthypo}
\sup_{I} \sup_{\bm \xi: | \bm \xi| = \ell} \sup_{s \in [ t_k , 1]}  \left| \E\left[  f_{\bm \lambda,s}(\bm{\xi})  \right]   -  \left(\frac{\sqrt{2|I|}}{N}\right)^{\ell}\E[g^\ell]  \right| \le N^{-\theta} \left(\frac{\sqrt{|I|}}{N}\right)^{\ell}.
\eeq
For the base case $m=0$, \eqref{e:inducthypo} is trivial, since $f_s(\bm \xi)=1$. 
Next, for the induction step, fix $m < n$ and suppose that \eqref{e:inducthypo} holds for $m-1$.
We now consider times $s \ge t_{m-1}$. Using the induction hypothesis to simplify the second term in \eqref{e:diffeq}, we have 
\begin{align}\label{e:diffeq2}
\partial_s f_s(\ximax)
&\le 
- \frac{2}{\eta} \sum_{i=1}^p
\Im \msc(z_{k_i}) \left(f_s(\ximax)
-
 \left( \frac{\sqrt{2|I|}}{N} \right)^{m}(m-1)\E[g^{m-2}]
\right)
+  C N^{-\theta} \left( \frac{\sqrt{|I|}}{N} \right)^{m}.
\end{align}
Note that we get the correct recursion since $(m-1)\E[g^{m-2}]=\E[g^{m}]$. Recall that $\Im \msc(z) \ge c_{\mathrm sc} \sqrt{\eta}$ for $|z| \le 10$.
%Then at least one of the following claims hold: either 
%\beq
%f_s(\ximax) \le \left( \frac{\sqrt{2|I|}}{N} \right)^{m}\E[g^{m}] + c^{-1}_{\mathrm sc}C %\eta^{1/2} N^{-\theta} \left( \frac{\sqrt{|I|}}{N} \right)^{m}, 
%\eeq

By \eqref{eq:bounddyn0}, $f_s(\ximax)$ is decreasing. So, if there exists a time $r \ge t_{m-1}$ such that
\begin{equation}\label{decreasingupper}
f_r(\ximax) \le \left(  \frac{\sqrt{2|I|} }{N} \right)^m\E[g^{m}] + c^{-1}_{\mathrm sc}C \eta^{1/2} N^{-\theta} \left( \frac{\sqrt{|I|}}{N} \right)^{m},
\end{equation}
then this inequality also holds for all $s > r$. We therefore assume that \eqref{decreasingupper} is false for all $s \in [t_{m-1}, t_m]$. Otherwise, the induction hypothesis holds for $m$, and the induction step is complete. 

When \eqref{decreasingupper} is false, the quantity in parentheses on the right side of \eqref{e:diffeq2} is positive. Then using the inequality $\Im \msc(z) \ge c_{\mathrm sc} \sqrt{\eta}$ in \eqref{e:diffeq2} and integrating the resulting differential inequality, we have
\begin{equation}\label{logit}
\partial_s \log \left(f_s(\ximax)
-
 \left(  \frac{\sqrt{2|I|} }{N} \right)^m\E[g^{m}]
 - C \eta^{1/2} N^{-\theta} \left( \frac{\sqrt{|I|}}{N} \right)^{m}
\right) \le - \frac{c}{\sqrt{\eta}}
\end{equation}
for some constant $c>0$, where we adjusted $C$ upward to absorb $c^{-1}_{\mathrm sc}$.

Integrating \eqref{logit} on the interval $[t_{m-1}, t_m]$ gives, after exponentiation, 
\begin{equation}
f_{t_m}(\ximax)  \le \left(  \frac{\sqrt{2|I|} }{N} \right)^m\E[g^{m}] + C \eta^{1/2} N^{-\theta} \left( \frac{\sqrt{|I|}}{N} \right)^{m} +  f_{t_{n-1}}(\ximax) \exp\left( - \frac{c}{\sqrt{\eta} }( t_m - t_{m-1} ) \right).
\end{equation}
Using \eqref{e:que} to bound
$f_{t_{n-1}}(\ximax)$, we deduce that there exists $N_0(m, \eps)>0$ such that, for $N \ge N_0$,  
\begin{equation}
f_{t_m}(\ximax)  \le \left(  \frac{\sqrt{2|I|} }{N} \right)^m\E[g^{m}] + N^{-\theta} \left( \frac{\sqrt{|I|}}{N} \right)^{m} .
\end{equation}
Recalling the logic following \eqref{e:diffeq}, this implies
\begin{equation}
\sup_{[t_n , 1]} f_{s}(\ximax)  \le \left(  \frac{\sqrt{2|I|} }{N} \right)^m\E[g^{m}] + N^{-\theta} \left( \frac{\sqrt{|I|}}{N} \right)^{m} .
\end{equation}
Applying the same argument to the minimizing configuration $\ximax_{\mathrm{min}}$, we obtain the corresponding lower bound.
 This proves \eqref{e:inducthypo} for $\ell = m$ and completes the induction step.

Integrating over $\bm \lambda \in \mathcal A_2$, we have shown that 
\beq\label{e:dynamicsgood}
\sup_{\bm \xi: | \bm \xi| = n} \sup_{t \in [ N^{-\theta}, 1]}  \left| \E\left[ \one_{\mathcal A_2} f_{\bm \lambda,s}(\bm{\xi})  \right]   - \left(\frac{\sqrt{2|I|}}{N}\right)^{n} \E[g^{m}] \right| \le N^{-\theta} \left(\frac{\sqrt{|I|}}{N}\right)^{n} ,
\eeq
The conclusion follows, since $\E\left[ \one_{\mathcal A_2^c} f_{\bm \lambda,s}(\bm{\xi})  \right]$ is negligible due to \eqref{e:que} and \eqref{e:Croothigh}.
\end{proof}

\bec\label{c:quemom}
Fix $\eps >0$ and $n \in \N$. There exist $\theta (\eps, n ) >0$ and $N_0(\eps, n) >0$ for which the following holds.
For all $N \ge N_0$,
\beq\label{e:quemom}
\sup_{I} \sup_{k \in \unn{1}{N} } \sup_{t \in [ N^{-\theta}, 1]}  \left| \left(\frac{N^2}{2|I|}\right)^n\E\left[p^{2n}_{kk}(t)\right]  - (2n-1)!!  \right| \le N^{-\theta},
\eeq
where the supremum on $I$ is taken over coordinate sets $I \subset \unn{1}{N}$ with $N^\eps \le | I | \le N^{1 - \eps}$, and $p_{kk}(t)$ is defined with respect to $I$.
\eec
\begin{proof}
The claim is immediate from \eqref{e:dynamicsclaim} by considering the particle configurations $\bm\xi$ with $n$ particles at site $k$. Then $f_s(\bm \xi)$ is exactly the moment in \eqref{e:quemom}, up to rescaling.
\end{proof}

\section{Decorrelation estimate}\label{s:decorrelation}

Before giving the proof of Lemma \ref{l:decorrelation}, we need to introduce a few important tools. First, we state the dynamics of the resolvent entries given in \cite{benigni2021fermionic}.

\bel[\cite{benigni2021fermionic}*{Lemma 3.11}]
	Recall that $(\bm{\lambda}(s), \mathbf{u}^s)$ was defined as the solution to the system \eqref{eq:dysonval} and \eqref{eq:dysonvect}. For any $z$ such that $\Im z\neq 0$, if one defines
	\beq\label{timechange}
	\widetilde{G}^s=\mathrm{e}^{-s/2}G^s,
	\eeq
	then we have for any $\alpha,\beta\in\unn{1}{N}$ that
	\begin{equation}\label{eq:dynareso}
		\d \scp{\q_\alpha}{\widetilde{G}^s(z)\q_\beta}
		=
		\left(
		s(z)+\frac{z}{2}
		\right)
		\partial_z \scp{\q_\alpha}{\widetilde{G}^s(z)\q_\beta}\d s
		-
		\frac{1}{\sqrt{N}}
		\sum_{k,\ell=1}^N
		\frac{\scp{\q_\alpha}{\u_k^s}\scp{\q_\beta}{\u_\ell^s}}{(\lambda_k-z)(\lambda_\ell-z)}\d B_{k\ell}.
	\end{equation}
\eel

We make the time change in \eqref{timechange} to connect the dynamics of $\widetilde{G}^s$ with the following stochastic advection equation, which first appeared in \cite{bourgade2018extreme}. The proof is a straightforward computation and therefore omitted.

\bel
Let the characteristic $z_s$ be defined as
\beq\label{characteristics}
z_s=\frac{1}{2}\left(
\mathrm{e}^{s/2}(z+\sqrt{z^2-4})
+
\mathrm{e}^{-s/2}(z-\sqrt{z^2-4})
\right).
\eeq
Then for any function $h_0 \in C^\infty(\mathbb{H})$, we have
\beq \label{advection}
\partial_s h_s(z)
=
\left(
\msc(z)+\frac{z}{2}
\right)
\partial_z h_s(z)
\quad\text{with}\quad
h_s(z) = h_0(z_s)
\eeq
for all $z \in \mathbb{H}$.
\eel

We will use this equation to relate the resolvent at time $s$ with itself at time 0, in order to decorrelate the resolvent from the eigenvectors in \eqref{e:keyestimate}. This reduces the problem to one of eigenvector decorrelation for eigenvector entries with distinct indices. For this, we begin by introducing a new family of eigenvector moment observables.

\bed\label{d:momobs1}
Let $\alpha_1,\alpha_2,\beta_1,\beta_2\in\unn{1}{N}$ be a set of pairwise distinct indices and let $j,k\in\unn{1}{N}$ such that $j\neq k$. Write $\u_k  = \u_k^s$. We define averaged moment observables by
\beq
g_s(k,k)=\frac{N^2}{3}\mathbb{E}\left[\mom{k}{k}{k}{k}\middle\vert H_0,\bm{\lambda}\right]
\eeq
and 
\beq
g_s(j,k) = \frac{N^2}{6}\mathbb{E}\left[Z\middle\vert H_0,\bm{\lambda} \right],
\eeq
where $Z$ is the symmetrized four-point function given by
\begin{align}
Z &= \mom{j}{j}{k}{k}+\mom{j}{k}{j}{k}\\
  &+ \mom{j}{k}{k}{j}+\mom{k}{j}{j}{k}\\
  &+\mom{k}{j}{k}{j}+\mom{k}{k}{j}{j}.
\end{align}
We omit the dependence on $\q_{\alpha_1},\q_{\alpha_2},\q_{\beta_1},\q_{\beta_2}$ in the notation of $g_s(j,k)$.
\eed
\begin{rmk}
	These moment observables generalize the original ones introduced in \cite{bourgade2013eigenvector}, which treated the case $\alpha_1=\alpha_2=\beta_1=\beta_2$. Though they now involve different entries of different eigenvectors, due to the symmetrization they still satisfy a parabolic equation. While we only need this four moment observable for our main results, an extension to higher moments is possible. The details are provided in Appendix \ref{a:genmoments}.
\end{rmk}

\bel\label{l:emfnew1}
Suppose that $(\bm{\lambda}(s), \u^s)$ is the solution to the Dyson vector flow \eqref{eq:dysonvect}. 
Then $g_s$ given by \Cref{d:momobs1} satisfies the equation 
\beq\label{eq:emfnew1}
\partial_s g_s(j,k)=\sum_{\ell\neq k} \zeta_k^{(j,k)}(1+2\zeta_\ell^{(j,k)})\frac{g_s(j,\ell)-g_s(j,k)}{N(\lambda_\ell-\lambda_k)^2}
+
\sum_{\ell\neq j} \zeta_j^{(j,k)}(1+2\zeta_\ell^{(j,k)})\frac{g_s(\ell,k)-g_s(j,k)}{N(\lambda_\ell-\lambda_j)^2},
\eeq
where we define, for $j\neq k$,
\beq
\zeta^{(j,k)}_\ell=0\quad\text{if }\ell\neq j,k,\quad \zeta^{(j,k)}_j=\zeta^{(j,k)}_k=1, \quad \zeta^{(j,j)}_\ell=0\quad\text{if }\ell\neq j,\quad\text{and}\quad \zeta^{(j,j)}_j=1.
\eeq
\eel
We will later drop the superscript $(j,k)$ for ease of notation.
\begin{proof}
For any $k \in \unn{1}{N}$, we have by It\^{o}'s lemma and \eqref{eq:dysonvect} that
\begin{align}\label{e:productflow}
\d \left( \scp{\q_\alpha}{\u_k} \scp{\q_\beta}{\u_k} \right) &= \sum_{\ell \neq k} \frac{ \scp{\q_\alpha}{\u_\ell}\scp{\q_\beta}{\u_\ell} - \scp{\q_\alpha}{\u_k} \scp{\q_\beta}{\u_k} }{ N (\lambda_k - \lambda_\ell)^2}  \\
&+ \frac{1}{\sqrt{N}} \sum_{\ell \neq k} \frac{ \d B_{kl}}{\lambda_k - \lambda_\ell}
\left( \scp{\q_\alpha}{\u_k}\scp{\q_\beta}{\u_\ell} + \scp{\q_\beta}{\u_k} \scp{\q_\alpha}{\u_\ell} \right).
\end{align}
It is then straightforward to derive \eqref{eq:emfnew1} from \eqref{e:productflow} and It\^{o}'s lemma by writing the product of four eigenvector entries as the product of two pairs of entries.
\end{proof}

Because this observable follows a parabolic equation, we can obtain a quantitative bound on it using the maximum principle.
The following proposition shows decorrelation of different entries of eigenvectors up to symmetrization. In particular, if $j=k$ we obtain a bound on the joint moment of distinct entries of the same eigenvector.

\bep\label{p:emf1}
Fix $\omega, \eps >0$, $\mathfrak{a}\in(0,1/3)$, and an initial condition $H_0$. 
Let $I \subset \unn{1}{N}$ be such that $N^{\eps} \le | I | \le N^{1- \eps}$ and set $s=N^{-1/3+\mathfrak{a}}$.
There exists an event $\mathcal{A}_2(\eps, \omega,\q, I)\subset \mathcal{A}_1(\omega,I)$ with $\P\left(\mathcal{A}_2(\eps, \omega, \q, I)\right)\geqslant 1-N^{-D}$ such that the following holds for all eigenvalue paths $\bm{\lambda}\in \mathcal{A}_2(\eps, \omega, \q, I )$. 
There exists $C(\fa, \delta, \eps, \omega) >0$ such that
\beq
\sup_{j,k}\vert g_s(j,k)\vert  \le \frac{C N^{5 \omega }}{N s^{3/2} }.
\eeq
\eep

\begin{proof}
	%The proof is based on a maximum principle since the equation \eqref{eq:emfnew1} can be understood as a parabolic equation. 
Let $(j_m,k_m)$ be a pair of indices such that
	\beq
	g_s(j_m,k_m)=\max_{j,k\in\unn{1}{N}} g_s(j,k)
	\eeq
	Note that this maximum is not necessarily unique; in this case we pick one arbitrarily under the constraint that $(j_m(s),k_m(s))$ remains piecewise constant in $s$. From Lemma \ref{l:emfnew1}, we have
	\beq\label{e:450}
	\partial_s g_s(j_m,k_m)=\sum_{\ell\neq k_m}\zeta_{k_m}(1+2\zeta_\ell)\frac{g_s(j_m,\ell)-g_s(j_m,k_m)}{N(\lambda_\ell-\lambda_{k_m})^2}
	+
	\sum_{\ell\neq j_m} \zeta_{j_m}(1+2\zeta_\ell)\frac{g_s(\ell,k_m)-g_s(j_m,k_m)}{N(\lambda_\ell-\lambda_{j_m})^2}.
	\eeq
	The two sums in \eqref{e:450} are bounded in the same way, so we give details for the first sum only. Note that by definition of $(j_m,k_m)$ we have $g_s(j_m,\ell)-g_s(j_m,k_m)<0$. Let $\eta>0$,  so that	\beq\label{415}
	\sum_{\ell\neq k_m} \zeta_{k_m}(1+2\zeta_\ell)\frac{g_s(j_m,\ell)-g_s(j_m,k_m)}{N(\lambda_\ell-\lambda_{k_m})^2}
	\leqslant -\frac{1}{\eta}\left(
		\sum_{\ell\neq k_m}\frac{\eta g_s(j_m,k_m)}{N((\lambda_\ell-\lambda_{k_m})^2+\eta^2)}
		-
		\sum_{\ell\neq k_m}\frac{\eta g_s(j_m, \ell)}{N((\lambda_{\ell}-\lambda_{k_m})^2+\eta^2)}
	\right).
	\eeq
	If we denote $z_{k_m}=\lambda_{k_m}+\I\eta$, the first sum becomes
	\begin{align}\label{416}
	\sum_{\ell\neq k_m}\frac{g_s(j_m,k_m) \eta }{N((\lambda_k-\lambda_\ell)^2+\eta^2)}&= \frac{g_s(j_m,k_m)}{N}\Im\sum_{\ell =1}^N\frac{1}{\lambda_{\ell}-z_{k_m}}+\O{\frac{g_s(j_m,k_m)}{N\eta}}
	\\&=\Im \msc(z_{k_m}) g_s(j_m,k_m)+\O{\frac{N^\omega g_s(j_m,k_m)}{N\eta}},
	\end{align}
	where we used the local law \eqref{e:sclaw}.
	
	For the second sum in \eqref{415}, %we consider only one term in the definition of $g_s(j_m,\ell)$, first see that we have
	we first note that
	\beq
	\sum_{\ell\neq k_m}\frac{\eta g_s(j_m, \ell)}{N((\lambda_{\ell}-\lambda_{k_m})^2+\eta^2)}
	=
	\Im \sum_{\ell\neq j_m,k_m}\frac{g_s(j_m, \ell)}{N(\lambda_\ell-z_{k_m})}+\O{\frac{g_s(j_m,\ell)}{N\eta}}.
	\eeq
	Considering the term $\mathbb{E}\left[N^2\mom{j_m}{j_m}{\ell}{\ell}\middle\vert H_0,\bm{\lambda}\right]$ in the definition of $g_s(j_m,\ell)$, we bound, using the conditioning on $\bm \lambda$,
	\begin{multline}\label{e:451}
	\mathbb{E}\left[
		N\scp{\q_{\alpha_1}}{\u_{j_m}}\scp{\q_{\beta_1}}{\u_{j_m}}\Im \sum_{\ell \neq j_m,k_m}\frac{\scp{\q_{\alpha_2}}{\u_\ell}\scp{\q_{\beta_2}}{\u_\ell}}{\lambda_\ell-z_{k_m}}
		\middle\vert
		H_0,\bm{\lambda}
	\right]\\
	=\mathbb{E}\left[
	N\scp{\q_{\alpha_1}}{\u_{j_m}}\scp{\q_{\beta_1}}{\u_{j_m}}\Im\scp{\q_{\alpha_2}}{G^s(z_{k_m})\q_{\beta_2}}
	\middle\vert
	H_0,\bm{\lambda}
	\right]
	+\O{\frac{g_s(j_m,j_m)+g_s(j_m,k_m)}{N\eta}}.
	\end{multline}
	In order to bound the last expectation, we use %, similarly to \cite{benigni2021fermionic}*{Equation (3,14)}, 
the dynamics of the resolvent given in \eqref{eq:dynareso}. By Duhamel's formula we have
	\beq
	\widetilde{G}^s_{\alpha\beta}(z)-\widetilde{G}^0_{\alpha\beta}(z_s)
	=
	-\int_0^s \d \widetilde{G}^{s-\tau}_{\alpha\beta}(z_\tau)
	=
	\frac{1}{\sqrt{N}}\sum_{p,q=1}^N\int_0^s
	\frac{\scp{\q_{\alpha}}{\u_{p}^\tau}\scp{\q_{\beta}}{\u_{q}^\tau}\d B_{pq}(\tau)}{(\lambda_k(\tau)-z_{s-\tau})(\lambda_\ell(\tau)-z_{s-\tau})}
	+\O{\frac{N^\omega}{N\eta}}
	\eeq
	from the equation \eqref{eq:dynareso} with $\widetilde{G}^s_{\alpha\beta}(z)=\e^{-s/2}G^s_{\alpha\beta}(z)$, where we used 
\eqref{advection} to bound the drift term.
	
	 Now we observe that in \eqref{e:451}, we condition over the trajectory $\bm{\lambda}$, which consists in conditioning over the Brownian paths $((B_{kk}(\tau))_{\tau\in[0,1]})_{k\in\unn{1}{N}}$. However, the eigenvector dynamics is only driven by the off-diagonal $(B_{k\ell})_{k\neq \ell}$, so that when introducing the sum over the stochastic integrals above, only the diagonal terms have nonzero expectation. Thus
	\begin{align}\label{eq:452}
	\mathbb{E}&\left[
	N\scp{\q_{\alpha_1}}{\u_{j_m}}\scp{\q_{\beta_1}}{\u_{j_m}}\Im\scp{\q_{\alpha_2}}{G^s(z_{k_m})\q_{\beta_2}}
	\middle\vert
	H_0,\bm{\lambda}
	\right]
	\\&=
	\e^{s/2}\mathbb{E}\left[
	N\scp{\q_{\alpha_1}}{\u_{j_m}}\scp{\q_{\beta_1}}{\u_{j_m}}\Im\scp{\q_{\alpha_2}}{G^0((z_{k_m})_s)\q_{\beta_2}}
	\middle\vert
	H_0,\bm{\lambda}
	\right]
	\\
	&+\e^{s/2}\mathbb{E}\left[
	N\scp{\q_{\alpha_1}}{\u_{j_m}}\scp{\q_{\beta_1}}{\u_{j_m}}
	\frac{1}{\sqrt{N}}\sum_{p=1}^N\int_0^s\frac{\scp{\q_{\alpha_2}}{\u_{p}^\tau}\scp{\q_{\beta_2}}{\u_{p}^\tau}}{(\lambda_p(\tau)-(z_{k_m})_{s-\tau})^2}\d B_{pp}(\tau)
	\middle\vert
	H_0,\bm{\lambda}
	\right]
	+\O{\frac{N^{2\omega}}{N\eta}}.
	\end{align}
	For the second term in the equation above, we use the Burkholder--Gundy--Davis inequality to see that the quadratic variation is bounded by
	\beq
	\sum_{p=1}^N\int_0^s\frac{\scp{\q_{\alpha_2}}{\u_{p}^\tau}\scp{\q_{\beta_2}}{\u_{p}^\tau}}{(\lambda_p(\tau)-(z_{k_m})_{s-\tau})^2}\d B_{pp}(\tau)
	=
	\O{\sqrt{\sum_{p=1}^N\int_0^s\frac{\scp{\q_{\alpha_2}}{\u_{p}^\tau}^2\scp{\q_{\beta_2}}{\u_{p}^\tau}^2}{(\lambda_p(\tau)-(z_{k_m})_{s-\tau})^4}\d\tau}}
	=
	\O{\frac{\sqrt{s}N^{2\omega}}{\sqrt{N}\eta^2}},
	\eeq
	where we also used the eigenvector delocalization $\eqref{e:rigidity}$ and the fact that the imaginary part of a characteristic \eqref{characteristics} is increasing in $s$ so that $\Im (z_{k_m})_s\geqslant \eta$. Using this bound, we are able to control the martingale term since we have that for any $D>0$,
	\beq
	\P\left(\sup_{s\in[0,1]}\left\vert\sum_{p=1}^N\int_0^s\frac{\scp{\q_{\alpha_2}}{\u_{p}^\tau}\scp{\q_{\beta_2}}{\u_{p}^\tau}}{(\lambda_p(\tau)-(z_{k_m})_{s-\tau})^2}\d B_{pp}(\tau)\right\vert
	\leqslant \frac{\sqrt{s}N^{3\omega}}{\sqrt{N}\eta^2}\right)\geqslant 1-N^{-D}.
	\eeq
	We now define, $\A_2(\eps, \omega,\q, I)$ to be the intersection of $\A_1(\omega,I)$ and the set of paths such that the inequality above holds for all choices of $\alpha_2, \beta_2 \in I$. 
	Using this bound in \eqref{e:451}, we have
	\beq
	\e^{s/2}\mathbb{E}\left[
	\scp{\q_{\alpha_1}}{\u_{j_m}}\scp{\q_{\beta_1}}{\u_{j_m}}\frac{1}{\sqrt{N}}\sum_{p=1}^N\int_0^s\frac{\scp{\q_{\alpha_2}}{\u_{p}^\tau}\scp{\q_{\beta_2}}{\u_{p}^\tau}}{(\lambda_p(\tau)-(z_{k_m})_{s-\tau})^2}\d B_{pp}(\tau)
	\middle\vert
	H_0,\bm{\lambda}
	\right]
	=
	\O{\frac{\sqrt{s}N^{3\omega}}{N\eta^2}}.
	\eeq
	For the first term in \eqref{eq:452}, $\Im G^0_{\alpha_2\beta_2}(z_{k_m})$ is measurable with respect to $H_0$ and $\bm{\lambda}$, so that 
	\begin{multline}
	\e^{s/2}\mathbb{E}\left[
	N\scp{\q_{\alpha_1}}{\u_{j_m}}\scp{\q_{\beta_1}}{\u_{j_m}}\Im\scp{\q_{\alpha_2}}{G^0((z_{k_m})_s)\q_{\beta_2}}
	\middle\vert
	H_0,\bm{\lambda}
	\right]
	\\=
	\e^{s/2}\mathbb{E}\left[
	N\scp{\q_{\alpha_1}}{\u_{j_m}}\scp{\q_{\beta_1}}{\u_{j_m}}
	\middle\vert
	H_0,\bm{\lambda}
	\right]
	\Im\scp{\q_{\alpha_2}}{G^0((z_{k_m})_s)\q_{\beta_2}}=\O{\frac{N^{2\omega}}{N\eta\sqrt{\Im \msc(z_{k_m})}}}.
	\end{multline}
Here we used the entrywise local law \eqref{e:sclaw} and the bound \cite{benigni2021fermionic}*{(3.12)}, which states that
\beq
\mathbb{E}\left[
N\scp{\q_{\alpha_1}}{\u_{j_m}}\scp{\q_{\beta_1}}{\u_{j_m}}
\middle\vert
H_0,\bm{\lambda}
\right]
=
\O{\frac{N^\omega}{\sqrt{N\eta\Im \msc(z_{k_m})}}}.
\eeq
Combining the previous estimates, we obtain a bound for $\mathbb{E}\left[N^2\mom{j_m}{j_m}{\ell}{\ell}\middle\vert H_0,\bm{\lambda}\right]$ in $g_s(j_m,\ell)$. The other terms in $g_s(j_m,\ell)$ are bounded the same way.
Now we use this bound on $g_s(j_m,\ell)$ in \eqref{415} together with the bound \eqref{416} 
to control the first term of \eqref{e:450}. The second term of  \eqref{e:450} may be bounded similarly, and we obtain
\begin{align}
\partial_s g_s(j_m,k_m)
\leqslant &-\frac{C}{\eta}\left(
(\Im \msc(z_{k_m})+\Im \msc(z_{j_m}))g_s(j_m,k_m)
%\frac{\sqrt{s}N^{3\omega}}{N\eta^2}
\right)\\
&+
\O{\frac{N^{3\omega}}{N\eta}+\frac{\sqrt{s}N^{3\omega}}{N\eta^2}+\frac{N^{2\omega}}{N\eta\sqrt{\Im \msc(z_{k_m})}}+\frac{N^{2\omega}}{N\eta\sqrt{\Im \msc(z_{j_m})}}}.
\end{align}
Here, we used eigenvector delocalization \eqref{e:rigidity} to control the $g_s$ observables in the error terms of the previously mentioned estimates.

Recall the bounds $c\sqrt{\eta}\leqslant \Im \msc(z_{k_m})\leqslant C$, and set $\eta=N^{-2\omega}s^2$, so that
\beq
\frac{(\Im \msc(z_{k_m})+\Im \msc(z_{j_m}))s}{\eta} \geqslant cN^\omega.
\eeq
We obtain
\beq
g_s(j_m,k_m)
=
\O{\frac{N^{3\omega}}{N}+\frac{\sqrt{s}N^{3\omega}}{N\eta}+\frac{N^{3\omega}}{N\eta^{1/4}}+\e^{-cN^\omega}}.
\eeq
The second error term is the largest term, and we obtain an upper bound on the maximizer $g_s(j_m, k_m)$. The same proof can be done with the minimizer instead of the maximizer to get a bound on the absolute value by reversing all inequalities. This completes the proof.
\end{proof}

The next step of our strategy is to relax the symmetrization assumption. Indeed, while we get quantitative decorrelation bounds on symmetrized eigenvector moments, they are not enough to bound one of the joint moments, say $\E\left[N^2\mom{j}{j}{k}{k}\right]$, by itself. We now introduce another set of moment observables which also follows a form of the eigenvector moment flow.

\bed\label{d:momobs2}
Let $\alpha_1,\beta_1,\alpha_2,\beta_2\in\unn{1}{N}$ be a family of pairwise distinct indices, let $j,k\in\unn{1}{N}$ be such that $j\neq k$, 
and write $\u_k = \u^s_k$.
We define the observables
\beq
h_s(j,k)=h_s(k,j)=\frac{N^2}{2}\mathbb{E}\left[Y\middle\vert H_0,\bm{\lambda}\right]-g_s(j,k)
\eeq 
with 
\begin{equation}
Y= \mom{j}{j}{k}{k}+\mom{k}{k}{j}{j},
\end{equation}
and set
\beq
h_s(j,j)=0.
\eeq
\eed
%This a new form of moments observables, closer to the Fermionic observables from \cite{benigni2021fermionic} since it is now a signed observables. 
The dynamics for $h_s$ is given by the following lemma, which resembles the Fermionic eigenvector moment flow of \cite{benigni2021fermionic}.
Its proof is similar to that of \Cref{l:emfnew1}, so we omit it.

\bel
Suppose that $\u^s$ is the solution to the Dyson vector flow \eqref{eq:dysonvect} and $h_s$ is given by \ref{d:momobs2} then it satisfies the equation
\beq
\partial_s h_s(j,k) = \sum_{\ell \neq j,k} \frac{h_s(\ell,k)-h_s(j,k)}{N(\lambda_j-\lambda_\ell)^2}+\sum_{\ell \neq j,k} \frac{h_s(j,\ell)-h_s(j,k)}{N(\lambda_k-\lambda_\ell)^2}.
\eeq
\eel
Since we obtain a parabolic equation, we are able again to use a maximum principle to get quantitative bounds on $h_s(j,k)$ and thus to transfer our bounds on the average over six symmetrized moments to the average over only two moments. This quantitive estimate is given in the following proposition. We omit the proof, since it is similar to the proof of \Cref{p:emf1}.
\bep\label{p:emf2}
Fix $\omega, \eps >0$, $\mathfrak{a}\in (0, 1/3)$, and an initial condition $H_0$. Let $I \subset \unn{1}{N}$ be such that 
$N^{\eps} \le | I | \le N^{1- \eps}$ and set $s=N^{-1/3+\mathfrak{a}}$.
Then the following holds for all eigenvalue paths $\bm{\lambda} \in \mathcal{A}_2(\omega,\eps, \q, I )$, where $\mathcal{A}_2$ is the set from \Cref{p:emf1}. There exists $C(\fa, \delta, \eps) >0$ such that 
\beq
\sup_{j,k}\vert h_s(j,k)\vert \le \frac{C N^{5 \omega}}{N s^{3/2} }.
\eeq
\eep

We now have all the tools to prove Lemma \ref{l:decorrelation}.

\begin{proof}[Proof of Lemma \ref{l:decorrelation}]
	We start by using the resolvent dynamics to relate the resolvent at time $s$ to the resolvent at time $0$ as in the proof of Proposition \ref{p:emf1}. Indeed, $Q_s$ is an observable depending only on the eigenvector dynamics, and the same reasoning holds, so we obtain
	\begin{align}\label{437}
	\E \left[ \sum_{\alpha\neq\beta\in I}\scp{\q_\alpha}{\u_k} \scp{\q_\beta}{\u_j} \Im\scp{\q_\alpha}{G^s(z)\q_\beta}Q_s\middle\vert \bm \lambda\right]
	&=
	\e^{s/2}\mathbb{E}\left[ \sum_{\alpha\neq\beta}\scp{\q_\alpha}{\u_k} \scp{\q_\beta}{\u_j} \Im\scp{\q_\alpha}{G^0(z_s)\q_\beta}Q_s \middle\vert \bm{\lambda}\right]
	\\
	&+ \O{\frac{N^{(n+2)\omega}\vert I\vert^2\Psi^{n}(z)}{N^2\eta}+ \frac{\vert I\vert^2\sqrt{s}N^{(n+2)\omega}\Psi^{n}(z)}{N^2\eta^2}}.
	\end{align}
The estimate is similar to \eqref{eq:452}; the only difference comes from bounding $Q_s =\O{N^{(n-2)\omega}\Psi^n(z)}$ by \eqref{e:que}. To bound the term on the right side of \eqref{437}, we use the conditional Cauchy--Schwarz inequality to get
\begin{multline}
\left\vert \mathbb{E}\left[\sum_{\alpha\neq\beta}\scp{\q_\alpha}{\u_k} \scp{\q_\beta}{\u_j} \Im\scp{\q_\alpha}{G^0(z_s)\q_\beta}P_s(G)\middle\vert \bm{\lambda}\right]\right\vert
\\\leqslant 
\sqrt{\mathbb{E}\left[\sum_{\substack{\alpha_1\neq \beta_1\\ \alpha_2\neq\beta_2}\in I}\mom{k}{j}{k}{j}\Im\scp{\q_{\alpha_1}}{G^0(z_s)\q_{\beta_1}}\Im\scp{\q_{\alpha_2}}{G^0(z_s)\q_{\beta_2}}\vert \bm{\lambda}\right]\E\left[P_s(G)^2\middle\vert \bm{\lambda}\right]}.
\end{multline}
The second expectation in the square root can be bounded by $Q_s^2=\O{N^{2n\omega}\Psi^{2n}(z)}$ using \eqref{e:que}. For the first expectation we  begin by noting that if two indices are equal, say $\alpha_1 = \alpha_2$, then we can bound directly using delocalization and entrywise local law. Using that the imaginary part of $z_s$ is increasing in $s$ we obtain
\beq
\mathbb{E}\left[\sum_{\substack{\alpha \neq \beta_1,\beta_2}\in I}\scp{\q_\alpha}{\u_k}\scp{\q_{\beta_1}}{\u_j}\scp{\q_\alpha}{\u_k}\scp{\q_{\beta_2}}{\u_j}\Im\scp{\q_{\alpha}}{G^0(z_s)\q_{\beta_1}}\Im\scp{\q_{\alpha}}{G^0(z_s)\q_{\beta_2}}\vert \bm{\lambda}\right]=\O{\frac{\vert I\vert^3N^{4\omega}}{N^3\eta}}.
\eeq
It is clear that we get an even better bound if more than 2 indices are equal. Thus, it remains to bound the sum over 4 pairwise distinct indices; we denote by $\sum^\star$ the sum over such indices. We now use that our resolvent depends only on the initial data $H_0$ to write
\begin{multline}
\mathbb{E}\left[\sum_{\alpha_1,\beta_1,\alpha_2,\beta_2\in I}^\star\mom{k}{j}{k}{j}\Im\scp{\q_{\alpha_1}}{G^0(z_s)\q_{\beta_1}}\Im\scp{\q_{\alpha_2}}{G^0(z_s)\q_{\beta_2}}\;\middle\vert\;  \bm{\lambda}\right]
\\=
\mathbb{E}\left[\sum_{\alpha_1,\beta_1,\alpha_2,\beta_2\in I}^\star\E\left[\mom{k}{j}{k}{j}\middle\vert H_0,\bm{\lambda}\right]\Im\scp{\q_{\alpha_1}}{G^0(z_s)\q_{\beta_1}}\Im\scp{\q_{\alpha_2}}{G^0(z_s)\q_{\beta_2}}
 \;\middle\vert\;  \bm{\lambda}\right].
\end{multline}
Since the resolvent matrix is symmetric, $G_{\alpha_1\beta_1}=G_{\beta_1\alpha_1}$, we can symmetrize the sum and obtain
 \begin{multline}
 	\mathbb{E}\left[\sum_{\alpha_1,\beta_1,\alpha_2,\beta_2\in I}^\star\E\left[\mom{k}{j}{k}{j}\vert H_0,\bm{\lambda}\right]\Im\scp{\q_{\alpha_1}}{G^0(z_s)\q_{\beta_1}}\Im\scp{\q_{\alpha_2}}{G^0(z_s)\q_{\beta_2}}
 \;\middle\vert\;  \bm{\lambda}\right]
 	\\=
 	\mathbb{E}\left[\frac{1}{2}\sum_{\substack{\alpha_1,\beta_1,\alpha_2,\beta_2\in I}}^\star\frac{2}{N^2}(h_s(j,k) + g_s(j,k))\Im\scp{\q_{\alpha_1}}{G^0(z_s)\q_{\beta_1}}\Im\scp{\q_{\alpha_2}}{G^0(z_s)\q_{\beta_2}}\middle \vert \bm{\lambda}\right]=\O{\frac{\vert I\vert^4N^{\omega}}{N^4s^{3/2}\eta}},
 \end{multline}
where we used Propositions \ref{p:emf1} and \ref{p:emf2} and the entrywise local law \eqref{e:sclaw}. Putting all these estimates together, we obtain
\begin{multline}
\E \left[ \sum_{\alpha\neq\beta\in I}\scp{\q_\alpha}{\u_k} \scp{\q_\beta}{\u_j} \Im\scp{\q_\alpha}{G^s(z)\q_\beta}Q_s \;\middle\vert\; \bm \lambda\right]\\
=
\O{N^{n\omega}\Psi^{n}(z)\left(\frac{\vert I\vert^2}{N^2}\left(\frac{1}{\eta}+\frac{\sqrt{s}}{\eta^2}+\frac{N^{2\omega}}{s^{3/4}\sqrt{\eta}}\right)+\frac{\vert I\vert^{3/2} N^{2\omega}}{N^{3/2}\sqrt{\eta}}\right)}.
\end{multline}
This proves the lemma. % after taking $\omega$ small.
\end{proof}

\section{Regularized observable}\label{s:quereg}

This section and the next are devoted to the proof of  \Cref{t:main2}. Before entering into details, we give an outline of the argument.

We proceed by induction using the following induction hypothesis, where we recall that the quantities $p_{k\ell}$ were defined in \eqref{pdef}.  For the purpose of this outline, we state a slightly simplified version. The precise definition is given below as \Cref{d:bootstrap}. 

\begin{itemize}%[label=$\star$]
        \item Let $H$ be a generalized Wigner matrix. Fix $\mathfrak{a} \ge 0$ and a family of orthogonal vectors $\q=(\q_\alpha)_{\alpha\in \unn{1}{N}}$  in $\S^{N-1}$. The induction hypothesis holds with parameter $\mathfrak{r}$ if for every $D >0$, there exists $C(\mathfrak{r}, \mathfrak{a}, D) > 0$ such that
\beq\label{unionme2}
 \sup_{k,\ell \in \unn{1}{N}} \sup_{\substack{I_N \subset \unn{1}{N} \\ |I_N| \le N^{\mathfrak{r}}} } \P \left( 
 \left| p_{kl}(I_N) \right| \ge N^{\mathfrak{a}} \frac{\sqrt{|I_N|}}{N} \right) \le C N^{-D}.
\eeq
\end{itemize}

Let $\mathfrak{a} > 0$ and $\q$ be fixed, and suppose the induction hypothesis holds with parameter $\mathfrak{r}$. We will show as a consequence that there exists $\mathfrak{s}(\mathfrak{a}) >0$ such that the induction hypothesis holds for 
the parameter $\mathfrak{r} + \mathfrak{s}$. In short, this means that a joint QUE--QWM bound for sets of size $| I_N | \le N^{\mathfrak{r}}$ implies the same bound for set of size $| I_N | \le N^{\mathfrak{r} + \mathfrak{s}}$. Since the base case $\mathfrak{r} = 0$ holds by the delocalization estimate \eqref{e:rigidity}, it suffices to complete this induction step to show that this estimate applies to all $|I_N| \le N$, proving \Cref{t:main2}.

We begin with a preliminary technical obstacle noted in the introduction: the observables in \Cref{t:main2} are not regular with respect to the matrix entries. We introduce in this section the regularized observable $v(k,\ell, I)$, defined below in \eqref{vdef}, which serves as a smoothed substitute for the overlap between the $k$th and $\ell$th eigenvector appearing in \Cref{t:main2} (or the centered self-overlap when $k = \ell$). It has the property that 
\begin{equation}\label{11}
\left( \frac{N}{\sqrt{I}} p_{k\ell}(I) \right)^2 \lesssim v(k, \ell, I)
\end{equation}
and 
\begin{equation}\label{22}
v(k, \ell, I) \lesssim \sum_{(i,j) \in \mathcal W} \left( \frac{N}{\sqrt{I}} p_{ij}(I) \right)^2
\end{equation}
for a set of index pairs $\mathcal W$ containing $(k,\ell)$, which satisfies $| \mathcal W | \le N^{\kappa}$ for arbitrary $\kappa >0$. Equation \eqref{11} shows that it is enough to prove \Cref{t:main2} for the regularizations $v(k, \ell, I)$, and \eqref{22}, together with \eqref{e:que}, shows that the desired ergodicity and mixing bounds for $v(k, \ell, I)$ hold for matrices with large Gaussian noise. Hence, it remains to remove the hypothesis of Gaussian noise and show these bounds on $v(k, \ell, I)$ hold for all generalized Wigner matrices.

We use the four-moment method to remove this hypothesis. As mentioned in the introduction, given a generalized Wigner matrix $H$ and a time $t \ll 1$, we can find a generalized Wigner matrix $X$ such that the first three moments of the entries of $X_t$ match those of $H$, with error $O(N^{-2} t)$ in the fourth moment. If the first five derivatives of $v(k, \ell, I)$ satisfy good bounds, then by exchanging the entries of $H$ and $X_t$ one at a time using Taylor expansion to fifth order, we can transfer the aforementioned bound on $v(k,\ell, I)$ for $X_t$ to $H$ by comparing moments. Since $O(N^2)$ elements need to be exchanged, with each exchange accumulating error $O(N^{-2} t)$ multiplied by the derivative, it is necessary that the derivatives are $o(t)$ for this to work. 

However, we observe that to obtain the bound in \eqref{unionme2}, the time in \eqref{e:que} degenerates to one as $\mathfrak{a}$ tends to zero. Therefore, we require precise estimates on the derivatives to ensure they are $o(t)$. Obtaining these derivative estimates is the main idea behind the induction step. Assuming the induction hypothesis on sets $I_N$ with $| I_N | \le N^{\mathfrak{r}}$, we show that the derivatives of $v(k,\ell, I)$ for $| I_N | \le N^{\mathfrak{r} + \mathfrak{s}}$ satisfy $\left|\partial^j_{ab} v(k,\ell, I) \right| \lesssim N^{2(\mathfrak{a} + \mathfrak{s})}$ for $j\in\unn{1}{5}$, where $\partial_{ab}$ denotes the derivative with respect to the $(a,b)$ entry. We will see that if $\mathfrak{s}$ is chosen small enough relative to $\mathfrak{a}$, then the derivatives are $o(t)$, and therefore the four moment method implies the moments of $v(k,\ell, I)$ can be controlled when $| I_N | \le N^{\mathfrak{r} + \mathfrak{s}}$. As mentioned previously, this is enough to complete the induction step and hence prove \Cref{t:main2}.

In this section, we construct the regularized observables $v(k,\ell, I)$ and establish their basic properties. The estimates \eqref{11} and \eqref{22} are proved in \Cref{512} and \Cref{regcorollary}, and the derivatives of $v(k,\ell, I)$ are controlled in \Cref{l:Tderiv}. In the next section, \Cref{s:t2}, we give the four moment matching argument in \Cref{l:newcomparison} and prove \Cref{t:main2}.

\subsection{Regularized eigenvalues}
 We begin by recalling a result on the regularization of eigenvalues from \cite{benigni2020optimal}. 
 \bed
 For any $w \in [0,1]$, $M = \left( m_{ij} \right)_{1\le i,j \le N} \in \matn$, and indices $a,b \in \unn{1}{N}$, we define $\Theta^{(a,b)}_w M  \in \matn$ as follows. Let $\Theta^{(a,b)}_w M$ be the $N\times N$ matrix whose $(i,j)$ entry is equal to $m_{ij}$ if $(i,j) \notin \{ (a,b), (b,a) \}$. If $(i,j) \in \{ (a,b), (b,a) \}$, then set the $(i,j)$ entry equal to $w m_{a,b} = w m_{b,a}$.  We also set $\Theta^{(a,b)}_w  G  = (\Theta^{(a,b)}_w M - z)^{-1}$. 
\eed

For $k\in\unn{1}{N}$, we set $\hat{k}=\min(k,N+1-k)$.

\bep[{\cite{benigni2020optimal}*{Proposition 4.1}}]\label{p:regulareigval}
Fix $D,\delta, \eps >0$. For all $i \in \unn{1}{N}$, there exist functions $\widetilde{\lambda}_{i, \delta, \eps} \colon \matn \rightarrow \R$ such that the following holds, where we write $\tilde \lambda_i = \tilde \lambda_{i,\delta, \eps}$. For any generalized Wigner matrix $H$, there exists an event $\mathcal A = \mathcal A (\delta, \eps)$ such that for all  $j\in \unn{1}{5}$ and $a,b,c,d \in \unn{1}{N}$,
\begin{equation}\label{e:regev}
	\one_{\mathcal A} \left\vert	
		\widetilde{\lambda}_i(H)-\lambda_i(H)
	\right\vert	
	\le  \frac{N^\eps}{N^{2/3+\delta}\hat{i}^{1/3}}, \qquad 
	\sup_{0\leqslant w \leqslant 1}	\one_{\mathcal A}
	\left\vert
		\partial_{ab}^j \widetilde{\lambda}_i(\Theta^{(c,d)}_w H)
	\right\vert
	\le
	\frac{N^{j (\eps+\delta)}}{N^{2/3} \hat i^{1/3}},
\end{equation}
and 
\beq\label{e:regevpbound}
\P( \mathcal A^c) \le C_1 N^{-D},
\eeq
for some constant $C_1(D,\delta, \eps)>0$.
Further, there exists $C >0$ such that
\beq\label{e:detbounds}
\sup_{0\leqslant w \leqslant 1}
\left\vert
	\partial^j_{ab} \widetilde{\lambda}_i(\Theta^{(c,d)}_wH)
\right\vert
\leqslant C N^{Cj}.
\eeq
\eep
\begin{rmk}
We note that \cite{benigni2020optimal}*{Proposition 4.1} had stronger error estimates and was stated for random matrices whose entry distributions have subexponential decay. It is routine to see that the same proof gives the previous proposition under the finite moment assumption \eqref{finitemoments}. The same remark holds for \Cref{l:good1} below.
\end{rmk}

\subsection{Construction of regularized observable} \label{s:constructre}

Given \Cref{p:regulareigval}, we can now state the definition of our regularized observable.
For any $i \in \unn{1}{N}$, $x\in \R$ and $\delta_2 >0$, we define intervals
\beq\label{eq:definte1}
I_{\delta_2} (x) =\left[
	x- \frac{ N^{-\delta_2}}{N^{2/3} \hat{i}^{1/3}},x + \frac{ N^{-\delta_2}}{N^{2/3} \hat{i}^{1/3}}\right], \qquad
\hat I_{\delta_2} (x) =\left[
	x - \frac{ N^{-\delta_2}}{2N^{2/3} \hat{i}^{1/3}}, x+ \frac{ N^{-\delta_2}}{2N^{2/3} \hat{i}^{1/3}}\right].
\eeq
When considering intervals $I_{\delta_2} (\lambda_m)$ or $\hat I_{\delta_2} (\lambda_m)$ centered at some eigenvalue $\lambda_m$, we always set $i=m$ in their definitions. 
For any interval $I\subset \R$, we denote the function that counts the eigenvalues $\bm{\lambda}$ of a given matrix in $I$ by
\beq\label{e:lambdaI1}
\mathcal{N}_{\bm{\lambda}}(I)=
\left\vert
	\left\{
		i \in\unn{1}{N}\mid \lambda_i \in I
	\right\}
\right\vert.
\begin{comment}
\quad\text{and}\quad
\mathcal{N}_{\tilde{\bm{\lambda}}}(I)=
\left\vert
	\left\{
		i \in\unn{1}{N}\mid \tilde \lambda_i \in I
	\right\}
\right\vert.
\end{comment}
\eeq

\bed\label{d:regvect}
Suppose $M \in \matn$ and fix $\delta_1, \eps_1, \delta_2, \eps_2 >0$. Let $I_N \subset \unn{1}{N}$ be a deterministic subset and $\q=(\q_\alpha)_{\alpha\in I}$ be an orthogonal family of vectors in $\S^{N-1}$. Let $\tilde \lambda_i = \tilde \lambda_{i, \delta_1, \eps_1}(M)$ denote the regularized eigenvalues of \Cref{p:regulareigval}. Let $G = (M - z)^{-1}$ and set
\begin{align}
Z(z_1, z_2) = Z(M, \q, z_1, z_2) &= \frac{1 }{ | I_N|}\sum_{ \alpha, \beta \in I_N} \Im\scp{\q_\alpha}{G(z_1)\q_\beta}  \Im\scp{\q_\alpha}{G(z_2)\q_\beta}
\\
& - 2N^{-1} \sum_{\alpha \in I_N} \scp{\q_\alpha}{\Im G  (z_1)  \Im G  (z_2)\q_{\alpha}}\\
&+ N^{-2} |I_N| \tr  \Im  G  (z_1)  \Im G  (z_2),
\end{align}
where $z_1, z_2 \in \C \setminus \R$.

Given $k, \ell \in \unn{1}{N}$ and $E_1, E_2 \in \R$, set
\begin{equation}
\eta_k= \frac{N^{-\varepsilon_2}}{N^{2/3} \hat k^{1/3}}, \qquad 
\eta_\ell = \frac{N^{-\varepsilon_2}}{N^{2/3} \hat \ell^{1/3}},
\end{equation}
and define the regularized observable
\begin{equation}\label{vdef}
v(k, \ell) =  
\frac{N^2}{\pi^2} \int_{\hat I_{\delta_2}(\tilde{\lambda}_k)}\int_{\hat I_{\delta_2}(\tilde{\lambda}_\ell)}
Z(M, E_1 + \I \eta_k, E_2 + \I \eta_\ell)
\, \d E_1 \, \d E_2.
\end{equation}
\eed
The function $Z(z_1,z_2)$ is chosen because 
\begin{equation}\label{key}
Z(z_1, z_2) = \frac{1}{| I _N | } \sum_{i,j }^N \frac{ \eta_k\eta_\ell   p^2_{ij} }{ \left( ( \lambda_i - E_1)^2 + \eta^2_k\right) \left( ( \lambda_j - E_2)^2 + \eta^2_\ell  \right) }.
\end{equation}

Following \cite{benigni2020optimal}*{Definition 4.4}, we also define an event where various useful estimates hold simultaneously.

\bed\label{d:goodset}
Suppose $M \in \matn$. Given $\omega,\delta_1, \varepsilon_1,\delta_2, \eps_2, \delta_3 > 0$, and a family of orthogonal vectors $\q=(\q_\alpha)_{\alpha\in I}$ in $\S^{N-1}$, we define the events
\beq
\B(\omega,\delta_1,\varepsilon_1,\delta_2, \eps_2,\q)
=
\B_1(\omega,\q)\cap \B_1(\eps_2/8,\q) \cap \B_2(\delta_1,\varepsilon_1),
\eeq
\beq
\tilde \B(\omega,\delta_1,\varepsilon_1,\delta_2,\eps_2, \delta_3 , \q, k, \ell)  = \B(\omega,\delta_1,\varepsilon_1,\delta_2, \eps_2,\q) \cap \B_3(\delta_2, \delta_3, k , \ell),
\eeq 
where
\begin{itemize}
\item $\B_1(\omega,\q)$ is the event where the local semicircle laws \eqref{e:sclaw}, and rigidity and isotropic delocalization \eqref{e:rigidity} hold for all $\Theta^{(a,b)}_w M$ uniformly in $a,b \in \unn{1}{N}$ and $w \in [0,1]$,\footnote{More precisely, we demand that these equations hold with $\Theta^{(a,b)}_w G$ replacing $G^s$, $\lambda_k( \Theta^{(a,b)}_w M)$ replacing $\lambda_k$, the eigenvectors $\Theta^{(a,b)}_w \u_k$ of $\Theta^{(a,b)}_w M$ replacing $\u^s_k$, and $\sup_{a,b \in \unn{1}{N}}\, \sup_{w \in [0,1]}$ replacing $\sup_{s \in [ 0,1]}$.}

\item $\B_1(\eps_2/8,\q)$ is defined analogously to $\B_1(\omega,\q)$ with $\eps_2/8$ replacing $\omega$,

\item $\B_2(\delta_1,\varepsilon_1)$ is the event where 
\beq\label{e:regcloseB}
  \left\vert	
		\widetilde{\lambda}_i(M)-\lambda_i(M)
	\right\vert	
	\le  \frac{N^{\eps_1}}{N^{2/3+{\delta_1}}\hat{i}^{1/3}}
	\eeq
	and
\beq
\sup_{i \in \unn{1}{N}}\sup_{a,b,c,d \in \unn{1}{N} } \sup_{0\leqslant w \leqslant 1} 
	\left\vert
		\partial_{ab}^j \widetilde{\lambda}_i(\Theta^{(c,d)}_w M)
	\right\vert
	\le
	\frac{N^{j (\delta_1 + \eps_1)}}{N^{2/3} \hat i^{1/3}}
\eeq
hold for all $j\in \unn{1}{5}$, where the $\tilde \lambda_i$ are the regularized eigenvalues given by \Cref{p:regulareigval},
\item $\B_3(\delta_2, \delta_3, k, \ell)$ is the event on which both
$\mathcal{N}_{\bm{\lambda}}(I_{\delta_2}(\lambda_k )) \le N^{\delta_3} $ and $\mathcal{N}_{\bm{\lambda}}(I_{\delta_2}(\lambda_\ell )) \le N^{\delta_3}$ hold,
where $I_{\delta_2}(x )$ is defined in \eqref{eq:definte1} and $\bm \lambda$ is the vector of eigenvalues of $M$.
\end{itemize}
\eed 

We have the following probability bounds.

\bel[{\cite{benigni2020optimal}*{Lemma 4.6}}]\label{l:good1}
Let $H$ be a generalized Wigner matrix. For every choice of $D, \omega, \delta_1, \epsilon_1, \delta_2,\eps_2 >0$, and family of orthogonal vectors $\q=(\q_\alpha)_{\alpha\in \unn{1}{N}}$  in $\S^{N-1}$, there exists a constant $C (D,\omega, \delta_1, \eps_1, \delta_2, \eps_2, \q)$ such that
\beq\label{e:unionthis1}
\P \left(\B^c \left(\omega,\delta_1,\varepsilon_1,\delta_2,\eps_2,\q \right) \right)  \le C N^{-D}.
\eeq
\eel
\bel\label{l:good2}
Let $H$ be a generalized Wigner matrix and let $k,\ell \in \unn{1}{N}$.
For any $D,\omega, \delta_1, \epsilon_1, \delta_2, \eps_2, \delta_3 >0$ with $1/100 > \delta_2 > \eps_1 > 0$, and family of orthogonal vectors $\q=(\q_\alpha)_{\alpha\in \unn{1}{N}}$  in $\S^{N-1}$,
there exists $C(D, \omega, \delta_1, \epsilon_1, \delta_2, \eps_2, \delta_3, \q)>0$ such that
\beq
\P \left(\tilde \B^c \left(\omega,\delta_1,\varepsilon_1,\delta_2, \eps_2, \delta_3, \q, k, \ell \right) \right)\le CN^{-D}.
\eeq
\eel
\begin{proof}
This follows immediately from \Cref{l:good1} and the rigidity statement in \Cref{l:goodset}.
\end{proof}
\begin{rmk}
The bound in the previous lemma is uniform in the choice of $k,\ell \in \unn{1}{N}$. Therefore, a union bound gives a high-probability estimate for all choices of $k,\ell$ simultaneously:
\begin{equation}\label{simultaneous}
\P\left( \bigcap_{k,\ell \in \unn{1}{N}} \tilde \B \left(\omega,\delta_1,\varepsilon_1,\delta_2, \eps_2, \delta_3, \q, k, \ell \right)\right) \ge 1 - C N^{-D}.
\end{equation}
\end{rmk}

Given $M \in \matn$ and a set of orthogonal vectors $\q = (\q_\alpha)_{\alpha \in \unn{1}{N}}$, we define the rescaled ergodicity and mixing observables for any $k, \ell \in \unn{1}{N}$ and $I \subset \unn{1}{N}$ by 
\beq\label{hats}
\hat p_{k\ell}(I) = \hat p_{k\ell}(M,I) = \frac{N}{\sqrt{|I|}} p_{k\ell}(I).
\eeq

\bed \label{d:bootstrap}
Let $H$ be a generalized Wigner matrix. Fix $\mathfrak{r}, \mathfrak{a} \ge 0$ and a family of orthogonal vectors $\q=(\q_\alpha)_{\alpha\in \unn{1}{N}}$  in $\S^{N-1}$. We say that the bootstrap hypothesis $\Q(\mathfrak{r}, \mathfrak{a},\q)$ holds for $H$ if for every $D >0$, there exists $C(\mathfrak{r}, \mathfrak{a}, D) > 0$ such that
\beq\label{unionme}
 \sup_{\substack{I_N \subset \unn{1}{N} \\ |I_N| \le N^{\mathfrak{r}}} } \P \big(\mathcal{E}( \mathfrak{a},\q, I_N )^c \big) \le C N^{-D},
\eeq
where 
\beq
\mathcal{E}( \mathfrak{a},\q, I_N) = 
\bigcap_{a,b \in \unn{1}{N} } \bigcap_{0\leqslant w \leqslant 1}\bigcap_{k,\ell \in \unn{1}{N}} 
 \left\{ \left| \hat p_{k \ell} (\Theta^{(a,b)}_wH,I_N)\right| \le  N^{ \mathfrak{a}} \right \} .
\eeq
\eed

\bel\label{l:bootstrap}
Fix $\mathfrak{r}, \mathfrak{s}, \mathfrak{a}\ge 0$,  a family of orthogonal vectors $\q=(\q_\alpha)_{\alpha\in \unn{1}{N}}$ in $\S^{N-1}$, and suppose the bootstrap hypothesis $\Q(\mathfrak{r}, \mathfrak{a},\q)$ holds. Then $\Q(\mathfrak{r} + \mathfrak{s}, \mathfrak{a} + \mathfrak{s},\q)$ holds.
\eel
\bp
We first consider the $\hat p_{k\ell}$ with $k = \ell$. 
We may assume that $ N^{\mathfrak{r}} \le | I _ N | \le N^{\mathfrak{r} + \mathfrak{s}}$, since otherwise there is nothing to prove. 
Since $| I _ N | \le N^{\mathfrak{r} + \mathfrak{s}}$ we can partition the index set $I_N$ into at most $N^\mathfrak{s}$ disjoint sets of size at most $N^\mathfrak{r}$, which we call $J_\beta$:
\beq
I_N = \bigcup_{\beta \in S} J_\beta, \quad | J_\beta | \le N^{\mathfrak{r}}, \quad |S| \le N^\mathfrak{s}, \quad S = \unn{1}{|S|}.
\eeq
Then using the definition of $\Q(\mathfrak{r}, \mathfrak{a},\q)$, for any $\ell \in \unn{1}{N}$ we have on the set
\begin{equation}
\mathcal E_0 = \bigcap_{\beta \in S} \mathcal E ( \mathfrak{a}, \q, J_\beta)
\end{equation}
that
\begin{align}
\frac{1}{\sqrt{| I_N |}} \sum_{\alpha \in I_N} N\scp{\q_\alpha}{\u_\ell}^2 &=\frac{1}{\sqrt{| I_N |}} \sum_{\beta\in S} \sum_{\alpha \in J_\beta} N\scp{\q_\alpha}{\u_\ell}^2 \\
& \le  \frac{1}{\sqrt{| I_N |}} \sum_{\beta\in S} |J_\beta| +   N^{\mathfrak{a} + \mathfrak{s}} \\
&\le  \sqrt{| I _N |} +  N^{\mathfrak{a} + \mathfrak{s}}.
\end{align}
%The constant in the $\mathcal O$ notation can be taken to be $1$. 
%We conclude using $| I_N | \ge N^{\mathfrak{r}}$ that
A similar argument gives a lower bound of $\sqrt{| I _N |} -  N^{\mathfrak{a} + \mathfrak{s}}$, and we conclude that
\begin{equation}\label{e:newq}
\left | \hat p_{\ell \ell} (I_N) \right | \le N^{\mathfrak{a} + \mathfrak{s}} 
\end{equation}
for all $\ell \in \unn{1}{N}$. The argument for $p_{k \ell}$ with $k \neq \ell$ is similar. Finally, we observe that for any $D>0$, there exists $C(\mathfrak{r} + \mathfrak{s}, ,\mathfrak{a} + \mathfrak{s}, D)>0$ such that $\P( \mathcal E_0^c ) \le C N^{-D}$ by applying a union bound to \eqref{unionme}. In particular, this constant is independent of the choice of $I_N$ above.
\ep

\subsection{Estimates on the regularized observable}\label{s:estimatesre}

We first state a preliminary lemma.

\bel[{\cite{benigni2020optimal}*{Lemma 4.9}}]\label{l:eigsqsum}
Let $H$ be a generalized Wigner matrix, and fix  $\omega,\delta_1,\varepsilon_1,\delta_2, \eps_2> 0$. Then there exists $C = C(\omega) >0$ such that, for all $\ell\in\unn{1}{N}$, we have\footnote{In fact, the proof of \cite{benigni2020optimal}*{Lemma 4.9} yields the stronger upper bound $C N^{2/3 - 2\omega - \delta_2} {\hat \ell}^{1/3}$, but we do not need this improvement.}
\beq\label{e:eigsqsum}
\one_{\B} \sum_{p: \vert p-\ell\vert>N^{2\omega}}
\int_{\hat I_{\delta_2}(\lambda_\ell)} \frac{\d E}{(\lambda_p- E )^2}
\leqslant
C N^{2/3 + 2\omega - \delta_2} {\hat \ell}^{1/3},
\eeq
where $\B = \B(\omega,\delta_1,\varepsilon_1,\delta_2,\eps_2)$. 
\eel

The following lemma relates the regularized observable $v(k, \ell)$ to the $\hat p_{ij}$. We define
\begin{equation}
\mathcal J = \mathcal J(k,\ell) = \{ (i,j) :  i  \in  I_{\delta_2}({\lambda}_k) \text{ and } j  \in  I_{\delta_2}({\lambda}_\ell) \}.
\end{equation}

\bel\label{l:boundregeig}
Let $H$ be a generalized Wigner matrix. Fix $\delta_1,\varepsilon_1,\delta_2, \eps_2, \omega>0, \mathfrak{r},\mathfrak{s},\mathfrak{a}\geqslant 0$ with $\delta_1 > \eps_1$ and $\eps_2 > \delta_2 + 3\mathfrak{a}$, and a family of orthogonal vectors $\q=(\q_\alpha)_{\alpha\in \unn{1}{N}}$ in $\S^{N-1}$.
% and suppose the bootstrap hypothesis $\Q(\kappa, \omega,\q)$ holds. 
Suppose $I_N \subset \unn{1}{N}$ satisfies $| I_N | \le N^{\mathfrak{r} +\mathfrak{s}}$.
Then  
for all $k,\ell\in\unn{1}{N}$, we have
\begin{align}\label{e:boundedreclaim}
\one_{ \B \cap \mathcal E} v(k,\ell) 
&=
\one_{ \B \cap \mathcal E}
\frac{1}{\pi^2} \int_{\hat I_{\delta_2}({\lambda}_k)}\int_{\hat I_{\delta_2}({\lambda}_\ell)}  \sum_{(i,j)\in \mathcal J } \frac{ \eta_k\eta_\ell   \hat p^2_{ij} }{ \left( ( \lambda_i - E_1)^2 + \eta^2_k\right) \left( ( \lambda_j - E_2)^2 + \eta^2_\ell  \right) } \, \d E_1 \, \d E_2\\
&+ \mathcal{O}_{\mathfrak a}\left( N^{2\mathfrak{s} + 6\mathfrak{a} + \delta_2 - \eps_2} \left(  \mathcal N_{\bm \lambda } ( I_{\delta_2}(\lambda_k)) +  \mathcal N_{\bm \lambda } ( I_{\delta_2}(\lambda_\ell))   \right)\right)
+ \O{\left| \mathcal J  \right|N^{ \eps_1 + 2\eps_2 - \delta_1 - \delta_2}},
\end{align}
where $ \B =  \B(\omega,\delta_1,\varepsilon_1,\delta_2, \eps_2,\q)$ and $\mathcal E = \mathcal E (\mathfrak{a} + \mathfrak{s},\q, I)$. 
\eel
\begin{proof}

We work exclusively on the event $ \B \cap \mathcal{E}$ and drop this from our notation. 

Define
\begin{equation}
\tilde v(k, \ell) =  
\frac{N^2}{\pi^2} \int_{\hat I_{\delta_2}({\lambda}_k)}\int_{\hat I_{\delta_2}({\lambda}_\ell)}
Z(M, E_1 + \I \eta_k, E_2 + \I \eta_\ell)
\, \d E_1 \, \d E_2,
\end{equation}
where we have changed the intervals of integration compared to $v(k,\ell)$. Using \eqref{key} and \eqref{hats}, we write
\begin{align}
\tilde v(k, \ell)
&=
\frac{1}{\pi^2} \int_{\hat I_{\delta_2}({\lambda}_k)}\int_{\hat I_{\delta_2}({\lambda}_\ell)}  \sum_{(i,j)\in \mathcal I } \frac{ \eta_k\eta_\ell   \hat p^2_{ij} }{ \left( ( \lambda_i - E_1)^2 + \eta^2_k\right) \left( ( \lambda_j - E_2)^2 + \eta^2_\ell  \right) } \, \d E_1 \, \d E_2\\
&+\frac{1}{\pi^2} \int_{\hat I_{\delta_2}({\lambda}_k)}\int_{\hat I_{\delta_2}({\lambda}_\ell)}
\sum_{(i,j) \in \mathcal I^c } \frac{ \eta_k\eta_\ell   \hat p^2_{ij} }{ \left( ( \lambda_i - E_1)^2 + \eta^2_k\right) \left( ( \lambda_j - E_2)^2 + \eta^2_\ell  \right) }
\, \d E_1 \, \d E_2
\label{e:regvect1} ,
\end{align}
where we define
\begin{equation}
\mathcal I = \mathcal I(k,\ell) = \{ (i,j) :  | i - k| \le N^{2\mathfrak{a}} \text{ and }  | j -  \ell | \le N^{2\mathfrak{a}} \}.
\end{equation}

We now show that \eqref{e:regvect1} is a negligible error term. Using \eqref{e:newq} and \eqref{e:eigsqsum}, we  estimate
\begin{align}
&\frac{1}{\pi^2} \int_{\hat I_{\delta_2}({\lambda}_k)}\int_{\hat I_{\delta_2}({\lambda}_\ell)}
\sum_{\substack{ | i - k | > N^{2\mathfrak{a}} \\ 1 \le j \le N} } \frac{ \eta_k\eta_\ell   \hat p^2_{ij} }{ \left( ( \lambda_i - E_1)^2 + \eta^2_k\right) \left( ( \lambda_j - E_2)^2 + \eta^2_\ell  \right) } 
\, \d E_1 \, \d E_2 \\ 
&\le 
C \eta_k  N^{2\mathfrak{s} + 2\mathfrak{a}}
 \int_{\hat I_{\delta_2}({\lambda}_k)}\int_{\hat I_{\delta_2}({\lambda}_\ell)}
\sum_{\substack{ | i - k | > N^{2\mathfrak{a}} \\ 1 \le j \le N} }
\frac{ \eta_\ell  }{ \left(  \lambda_i - E_1\right)^2 \left( ( \lambda_j - E_2)^2 + \eta^2_\ell  \right) } 
\, \d E_1 \, \d E_2 \\ 
&\le 
C \mathcal N_{\bm \lambda } ( I_{\delta_2}(\lambda_\ell))\eta_k  N^{2\mathfrak{s} + 2\mathfrak{a}}
\int_{\hat I_{\delta_2}({\lambda}_k)}
\sum_{| i - k | > N^{2\mathfrak{a}}  }
\frac{ 1}{ \left(  \lambda_i - E_1\right)^2  } 
\, \d E_1\label{sumN}  \\ 
&\le C \mathcal N_{\bm \lambda } ( I_{\delta_2}(\lambda_\ell))  \eta_k N^{2\mathfrak{s} + 2\mathfrak{a}} N^{2/3 + 2\mathfrak{a} - \delta_2} \hat{k}^{1/3}\\
&\le C  \mathcal N_{\bm \lambda } ( I_{\delta_2}(\lambda_\ell)) N^{2\mathfrak{s} + 4\mathfrak{a} - \delta_2 - \eps_2}.
\end{align}
where $C=C(\mathfrak{a})>0$ is a constant that depends on $\mathfrak{a}$. In line \eqref{sumN}, we used the bound
\begin{align}
\frac{1}{\pi} \int_{\hat I_{\delta_2}({\lambda}_\ell)} \sum_{j=1}^N \frac{\eta_\ell}{(\lambda_j - E_2)^2 + \eta_\ell^2} \, \d E_2  &\le  \mathcal N_{\bm \lambda } ( I_{\delta_2}(\lambda_\ell)) + N^{2\mathfrak{a}} \eta_\ell N^{\eps_2 - \delta_2} \frac{\eta_\ell}{\left( \frac{1}{2} \eta_\ell N^{\eps_2 - \delta_2} \right)^2} + C N^{ 2\mathfrak{a} - \delta_2 - \eps_2} 
\\ &\le 2 \mathcal N_{\bm \lambda } ( I_{\delta_2}(\lambda_\ell)) ,
\end{align}
which follows from \eqref{e:eigsqsum}, $| \hat I_{\delta_2}(\lambda_k)  | =\eta_k N^{\eps_2 - \delta_2}$, and partitioning the sum into the subsets $\{j \in I_{\delta_2}(\lambda_\ell)\}$, $\{j \in I_{\delta_2}(\lambda_\ell)\}^c \cap \{ |j - \ell | \le N^{2\mathfrak{a}} \}$, and $\{ |j - \ell | \ge N^{2\mathfrak{a}} \}$. In the second subset, we bounded the integrand by using $| \lambda_j - E_2| > \frac{1}{2} \eta_\ell N^{\eps_2 - \delta_2}$ in the denominator.

An identical estimate holds for the sum over $ 1 \le i \le N$ and $ | j - \ell | > N^{2\mathfrak{a}}$ in \eqref{e:regvect1}. We conclude
\begin{align}\label{eq:regvect3}
\tilde v(k, \ell) &= 
\frac{1}{\pi^2} \int_{\hat I_{\delta_2}({\lambda}_k)}\int_{\hat I_{\delta_2}({\lambda}_\ell)}  \sum_{(i,j)\in \mathcal I }^N \frac{ \eta_k\eta_\ell   \hat p^2_{ij} }{ \left( ( \lambda_i - E_1)^2 + \eta^2_k\right) \left( ( \lambda_j - E_2)^2 + \eta^2_\ell  \right) } \, \d E_1 \, \d E_2\\
&+ \mathcal{O}_\mathfrak{a} \left( N^{  2\mathfrak{s} + 4\mathfrak{a} -\delta_2 - \eps_2} \left(  \mathcal N_{\bm \lambda } ( I_{\delta_2}(\lambda_k)) +  \mathcal N_{\bm \lambda } ( I_{\delta_2}(\lambda_\ell))   \right)\right).
\end{align}

Next, we remove the terms in the sum in \eqref{eq:regvect3} corresponding to the eigenvalues that do not lie in the sub-microscopic region $\mathcal J$. We observe that 
\begin{align}
\{(i,j) \in \mathcal I \setminus \mathcal J \} &\subset \left\{(i,j) \in \mathcal I \cap \lambda_i \notin I_{\delta_2}(\lambda_k)\right\} \bigcup \left\{(i,j) \in \mathcal I \cap \lambda_j \notin I_{\delta_2}(\lambda_\ell)\right\}\\
& := \mathcal K_1 \cup \mathcal K_2.
\end{align}
We give details for the sum over $\mathcal K_1$, since the sum over $\mathcal K_2$ is analogous.
We use $| \hat I_{\delta_2}(\lambda_k)  | =\eta_k N^{\eps_2 - \delta_2}$, $|\lambda_i - E| \ge  | \hat I_{\delta_2}(\lambda_k)  |$, $\left| \mathcal K_1 \right| \le N^{4\mathfrak{a}}$, and \eqref{e:newq} to deduce
\begin{align}
&\frac{1}{\pi^2} \int_{\hat I_{\delta_2}({\lambda}_k)}\int_{\hat I_{\delta_2}({\lambda}_\ell)}
\sum_{(i,j) \in \mathcal K_1 } \frac{ \eta_k\eta_\ell   \hat p^2_{ij} }{ \left( ( \lambda_i - E_1)^2 + \eta^2_k\right) \left( ( \lambda_j - E_2)^2 + \eta^2_\ell  \right) } 
\, \d E_1 \, \d E_2 \\ 
&\le 
\frac{N^{2\mathfrak{s} + 2\mathfrak{a}}}{\pi^2} \eta_k N^{\eps_2 - \delta_2} \int_{\hat I_{\delta_2}({\lambda}_\ell)}
\sum_{(i,j) \in \mathcal K_1 } \frac{ \eta_k\eta_\ell    }{ \left( (\frac{1}{2} \eta_k N^{\eps_2 - \delta_2}  )^2 \right) \left( ( \lambda_j - E_2)^2 + \eta^2_\ell  \right) } 
 \, \d E_2 \\ 
 &\le 
\frac{N^{2\mathfrak{s} + 2\mathfrak{a}}}{\pi} \eta_k N^{\eps_2 - \delta_2} 
N^{4\mathfrak{a}} \frac{ \eta_k   }{ (\frac{1}{2} \eta_k N^{\eps_2 - \delta_2}  )^2   } 
 \\ 
 & = 4 N^{2 \mathfrak{s} + 6 \mathfrak{a} + \delta_2 - \eps_2}.\label{eq:regvect4}
\end{align}

We have shown \eqref{e:boundedreclaim} with $\tilde v(k,\ell)$ in place of $v(k,\ell)$. To finish, we bound the difference of $\tilde v$ and $v$. 
Observe that by \eqref{e:regcloseB},
\beq\label{e:symmdiff}
 | \hat I_{\delta_2} (\lambda_k) \triangle \hat I_{\delta_2}( \tilde \lambda_k) | \le  | \lambda_k -\tilde \lambda_k | 
\le  \frac{N^{\eps_1}}{N^{2/3 + \delta_1} {\hat k}^{1/3}},
\eeq
and similarly for $\ell$ in place of $k$. Then, using that the integrand is bounded by $N^{2\mathfrak{s} + 2\mathfrak{a}} (\eta_k \eta_\ell)^{-1}$, we obtain

\begin{align}
\frac{1}{\pi^2} \int_{\hat I_{\delta_2}({\lambda}_k)}\int_{\hat I_{\delta_2}({\lambda}_\ell)} &\frac{ \eta_k\eta_\ell   \hat p^2_{ij} }{ \left( ( \lambda_i - E_1)^2 + \eta^2_k\right) \left( ( \lambda_j - E_2)^2 + \eta^2_\ell  \right) } 
\, \d E_1 \, \d E_2 \\ 
&= \frac{1}{\pi^2} \int_{\hat I_{\delta_2}({\tilde\lambda}_k)}\int_{\hat I_{\delta_2}({\tilde\lambda}_\ell)} \frac{ \eta_k\eta_\ell   \hat p^2_{ij} }{ \left( ( \lambda_i - E_1)^2 + \eta^2_k\right) \left( ( \lambda_j - E_2)^2 + \eta^2_\ell  \right) } 
\, \d E_1 \, \d E_2 \\
&+ \O{N^{\eps_1 + 2\eps_2 - \delta_1- \delta_2}}\label{addtilde}
\end{align}
We conclude by combining the estimates \eqref{eq:regvect3}, \eqref{eq:regvect4}, and \eqref{addtilde} to obtain \eqref{e:boundedreclaim}.
\end{proof}
\bec\label{regcorollary}
Retain the notation and hypotheses of the previous lemma, and fix $\delta_3 > 0$. Then
\begin{align}\label{e:boundedreclaim2}
\one_{ \tilde \B \cap \mathcal E} v(k,\ell) 
&\le
\one_{ \tilde \B \cap \mathcal E}
\sum_{i = k - N^{\delta_3}  }^{k+ N^{\delta_3}}  \sum_{j= \ell - N^{\delta_3}}^{\ell+N^{\delta_3}} 
\hat p^2_{ij}+ \mathcal{O}_{\mathfrak a} \left( N^{2\mathfrak{s} + 6\mathfrak{a} + \delta_2 + \delta_3 - \eps_2}\right)
+ {\mathcal O} \left( N^{\eps_1 + 2\eps_2 - \delta_1 - \delta_2 + \delta_3}\right),
\end{align}
where $ \tilde \B =  \tilde \B(\omega,\delta_1,\varepsilon_1,\delta_2, \eps_2, \delta_3, \q,k,\ell)$ and $\mathcal E = \mathcal E (\mathfrak{a} + \mathfrak{s},\q, I)$. 
\eec
\begin{proof}
The claim \eqref{e:boundedreclaim2} is immediate from \eqref{e:boundedreclaim} and the definition of $\tilde \B$.
\end{proof}

\begin{lem}\label{512} Let $H$ be a generalized Wigner matrix. Fix $\delta_1,\varepsilon_1,\delta_2, \eps_2, \omega>0,$ $\mathfrak{r}, \mathfrak{s}, \mathfrak{a}\geqslant 0$ and a family of orthogonal vectors $\q=(\q_\alpha)_{\alpha\in \unn{1}{N}}$ in $\S^{N-1}$.
Suppose $I_N \subset \unn{1}{N}$ satisfies $| I_N | \le N^{\mathfrak{r} +\mathfrak{s}}$.
Then  
for all $k,\ell\in\unn{1}{N}$, we have
\begin{align}\label{e:boundedreclaim4}
\one_{\B \cap \mathcal E} \hat{p}_{k\ell}^2\left(\Theta^{(a,b)}_w H\right) \le v\left(\Theta^{(a,b)}_w H,k,\ell\right)  + \O{ N^{2\mathfrak{s} + 2\mathfrak{a} + \delta_2 -\eps_2 }} + \O{N^{\eps_1 + 2\eps_2 - \delta_1- \delta_2}}
%+ \O{ N^{2\sigma + 2\omega + \delta_2 -\eps_2 }}.
\end{align}
 where the error is uniform in $a,b \in \unn{1}{N}$ and $w \in [0,1]$, and $ \B =  \B(\omega,\delta_1,\varepsilon_1,\delta_2, \eps_2,\q)$, $\mathcal E = \mathcal E (\mathfrak{a} + \mathfrak{s},\q, I)$.
\end{lem}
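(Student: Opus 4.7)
The plan is to exploit the positivity of the representation \eqref{key} of the kernel $Z$ as a sum over index pairs $(i,j)$ of nonnegative Poisson-weighted squares $\hat p_{ij}^2$. First I would drop every summand except $(i,j)=(k,\ell)$ to obtain the pointwise lower bound
\[
v(\Theta^{(a,b)}_w H, k, \ell) \;\ge\; \frac{\hat p_{k\ell}^2}{\pi^2} \left(\int_{\hat I_{\delta_2}(\tilde \lambda_k)} \frac{\eta_k\, \d E_1}{(\lambda_k - E_1)^2 + \eta_k^2}\right)\left(\int_{\hat I_{\delta_2}(\tilde \lambda_\ell)} \frac{\eta_\ell\, \d E_2}{(\lambda_\ell - E_2)^2 + \eta_\ell^2}\right),
\]
where every spectral quantity refers to the perturbed matrix $\Theta^{(a,b)}_w H$. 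The product integral factorizes, so the task reduces to showing that each one-dimensional factor equals $\pi + O(N^{\delta_2 - \eps_2})$.

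Each factor is evaluated exactly as a difference of arctangents at the endpoints of $\hat I_{\delta_2}(\tilde \lambda_k)$. On the event $\B$, the regularization bound \eqref{e:regcloseB} combined with the uniform derivative bound contained in $\B_2$ (used to transfer control from $H$ to the perturbed matrix via a one-variable Taylor expansion in the $(a,b)$ entry) forces $|\tilde \lambda_k(\Theta^{(a,b)}_w H) - \lambda_k(\Theta^{(a,b)}_w H)| \le N^{\eps_1 - \delta_1}/(N^{2/3}\hat k^{1/3})$, which is much smaller than the half-length $N^{-\delta_2}/(2N^{2/3}\hat k^{1/3})$ of $\hat I_{\delta_2}(\tilde \lambda_k)$ under the expected parameter ordering $\delta_1 > \delta_2 + \eps_1$. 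Thus $\lambda_k$ lies well inside the interval, at distance of order $N^{-\delta_2}/(N^{2/3}\hat k^{1/3})$ from either endpoint, which exceeds $\eta_k$ by a factor $N^{\eps_2 - \delta_2}$. Each boundary arctangent then differs from $\pm \pi/2$ by $O(N^{\delta_2 - \eps_2})$, and multiplying the two factors produces the claimed approximation $\pi^2 + O(N^{\delta_2 - \eps_2})$.

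Combining these two steps yields
\[
\hat p_{k\ell}^2 \le v(\Theta^{(a,b)}_w H, k, \ell) + C\, \hat p_{k\ell}^2\, N^{\delta_2 - \eps_2}.
\]
On the event $\mathcal E$ one has $|\hat p_{k\ell}(\Theta^{(a,b)}_w H)| \le N^{\mathfrak a}$ directly from the definition, so $\hat p_{k\ell}^2 \le N^{2\mathfrak a} \le N^{2(\mathfrak a + \mathfrak s)}$, and the multiplicative correction is absorbed into the additive error $O(N^{2\mathfrak s + 2\mathfrak a + \delta_2 - \eps_2})$ asserted in \eqref{e:boundedreclaim4}. The main point requiring care is precisely the transfer of the rigidity and regularization inputs from $H$ to $\Theta^{(a,b)}_w H$, which is exactly what the uniform-in-$(a,b,w)$ formulation of $\B$ is designed to handle; the remainder is an elementary arctangent computation paired with positivity of \eqref{key}.
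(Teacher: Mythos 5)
Your proposal follows the same route as the paper's (quite terse) proof. The paper represents $\hat p_{k\ell}^2$ as a double Poisson integral over $\R^2$, cuts it down to the small box $\hat I_{\delta_2}(\lambda_k)\times \hat I_{\delta_2}(\lambda_\ell)$ at a cost of $\O{N^{2\mathfrak{s}+2\mathfrak{a}+\delta_2-\eps_2}}$ (the tail loss $\O{N^{\delta_2-\eps_2}}$ multiplied by the bound $\hat p_{k\ell}^2\le N^{2\mathfrak{a}}$ on $\mathcal E$), and then invokes \eqref{key} and positivity to dominate the restricted integral by $v$. You reorganize the same three ingredients — positivity of \eqref{key}, the arctangent estimate $\int_{\hat I}\frac{\eta\,\d E}{(\lambda-E)^2+\eta^2}=\pi+\O{N^{\delta_2-\eps_2}}$, and $\hat p_{k\ell}^2\le N^{2\mathfrak{a}}$ on $\mathcal E$ — by first dropping to the single $(k,\ell)$ summand of $v$ and then evaluating the surviving integral, which is an equivalent order of operations.

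One thing worth tightening: the displayed inequality needs $\lambda_k(\Theta^{(a,b)}_w H)$ to lie deep inside $\hat I_{\delta_2}\bigl(\tilde\lambda_k(\Theta^{(a,b)}_w H)\bigr)$, but your appeal to ``Taylor expansion in the $(a,b)$ entry'' only moves the center $\tilde\lambda_k$ stably; it does not directly control the motion of $\lambda_k$ itself under the $\Theta^{(a,b)}_w$ perturbation, since $\mathcal B_2$ bounds derivatives of $\tilde\lambda_k$, not of $\lambda_k$. The extra input one needs is first-order eigenvalue perturbation theory ($\partial_t\lambda_k=2\u_k(a)\u_k(b)$) together with the isotropic delocalization in $\mathcal B_1$, which gives a derivative of order $N^{-1+\omega}$ and hence a shift $\lesssim |h_{ab}|N^{-1+\omega}\ll |\hat I_{\delta_2}|$. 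The paper's own two-line proof does not spell this out either, so this is a shared (minor) omission rather than a flaw specific to your argument; otherwise the proposal is sound.
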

\begin{proof}
Using $\int \frac{a \, \d x}{x^2 + a^2}  = \arctan\left( \frac{x}{a} \right)$,  we have on $\B$ that
\begin{align}
\hat p_{k\ell}^2&=
\frac{1}{\pi^2}\int_\R \int_\R \frac{ \eta_k\eta_\ell   \hat p^2_{k \ell} }{ \left( ( \lambda_k - E_1)^2 + \eta^2_k\right) \left( ( \lambda_\ell - E_2)^2 + \eta^2_\ell  \right) }\, \d E_1 \, \d E_2 \\
&=
\frac{1}{\pi^2}\int_{\hat I_{\delta_2}({\lambda}_k)}\int_{\hat I_{\delta_2}({\lambda}_\ell)} \frac{ \eta_k\eta_\ell   \hat p^2_{k\ell} }{ \left( ( \lambda_k - E_1)^2 + \eta^2_k\right) \left( ( \lambda_\ell - E_2)^2 + \eta^2_\ell  \right) } \, \d E_1 \, \d E_2\label{positivity}\\
& + \O{ N^{2\mathfrak{s} + 2\mathfrak{a} + \delta_2 -\eps_2 }}.\label{theerror}
\end{align}
%This shows \eqref{e:boundedreclaim3}, after using \eqref{e:boundedreclaim2} and \eqref{addtilde}.
Finally, \eqref{positivity}, \eqref{addtilde}, and \eqref{key} imply \eqref{e:boundedreclaim4}.
\end{proof}
\begin{rmk}
We observe that the previous lemma holds with no restriction on the size of $\mathfrak{a}$ or $\mathfrak{s}$. In particular, the error term may  grow with $N$ if $\mathfrak{a}$ is taken large; this case will be used in the next section.
\end{rmk}

 For $M \in \matn$, $I_N \subset \unn{1}{N}$, $k, \ell \in \unn{1}{N}$, and $n \in \N$, we define the shorthand
\beq\label{e:Tdef}
T(M) = v(M, k, \ell), \qquad T_n(M) = T(M)^n.
\eeq 
We now control moments of the observable $T(M)$ for the matrix process $H_s$. Recall that the bootstrap hypothesis is not needed for $H_s$ since we already have the bound \eqref{e:que}.  

\bel \label{l:dynamichighmom}
Fix $\delta , \mathfrak{b} >0$ and a family of orthogonal vectors $\q=(\q_\alpha)_{\alpha\in \unn{1}{N}}$ in $\S^{N-1}$. 
Set $s = N^{- \mathfrak{b}/4}$. 
For any generalized Wigner matrix $H$, and all $k,\ell\in\unn{1}{N}$,  $I_N \subset \unn{1}{N}$, and $n \in \mathbb N$, there exists a constant $C = C(\delta, \mathfrak{b}, n)>0$
such that
\beq
\E\left[
T_n(H_s)
\right]
\leqslant 
C N^{n(1 + 10^{-5})\mathfrak{b}},
\eeq
 where $T_n(H_s)$ is defined as in \eqref{e:Tdef} using $\delta_1 = \delta$, $\eps_2 = \delta_1/10^2$, $ \delta_2 = \delta_1/10^3$,  and $\eps_1 = \delta_1/10^4$.

\eel
\begin{proof}
Set $\omega =  \min( \eps_1, 10^{-5}\mathfrak{b} /3)$,   $\tilde \B= \tilde \B(\omega,\delta_1,\varepsilon_1,\delta_2,\eps_2,\omega,\q,k, \ell)\subset\tilde \B(\omega,\delta_1,\varepsilon_1,\delta_2,\eps_2,\mathfrak{b},\q,k, \ell)$, and $\mathcal E = \mathcal E (\omega, I , \q)$.
We write

\beq\label{e:hpdecomp}
\E\left[ T_n(H_s) \right]
=
\E\left[
	T_n(H_s)
	\left(
		\one_{\tilde \B \cap \mathcal E}
		+
		\one_{(\tilde \B \cap \mathcal E)^c}
	\right)
\right].
\eeq
By \eqref{e:boundedreclaim2} with $\mathfrak{a} = \omega$, $\mathfrak{s} =0$, and $\delta_3 = \omega$, we have
\begin{align}\label{e:boundedreclaim22}
\one_{ \tilde \B \cap \mathcal E} v(k,\ell) 
&\le
\one_{ \tilde \B \cap \mathcal E}
\sum_{i = k - N^{\omega}  }^{k+N^{\omega}}  \sum_{j= \ell - N^{\omega}}^{\ell+ N^{\omega}} 
\hat p^2_{ij}+ \mathcal{O}_{\omega} \left( N^{- \theta}\right)
\end{align} 
for some $\theta(\delta) > 0$. 
\begin{comment}
 Because  $v_\ell \ge 0$, \beq
\E\left[
 T_n(M) 
	\one_{\tilde \B\cap \mathcal E}
\right]
\le
\E\left[
	 v(k,\ell)^n
	\one_{\tilde \B \cap \mathcal E}
\right].
\eeq
\end{comment}
Note that \eqref{e:que} and \eqref{psis} give that $\hat p^2_{ij}\leqslant N^{\omega}\left(\frac{\vert I\vert}{Ns^4}+s^{-3}\right)\leqslant 2N^{\omega + \mathfrak{b}}$. Together with \eqref{e:boundedreclaim22}, this implies
\beq
\E\left[
	T_n(H_s) \one_{\tilde \B\cap  \mathcal E}
\right]
\leqslant 
\E\left[
	\left(
\sum_{i = k - N^{\omega} }^{k+N^{\omega}}  \sum_{j= \ell - N^{\omega}}^{\ell+ N^{\omega}}  
\hat p^2_{ij}+ \mathcal{O}_{\omega} \left( N^{ - \theta}\right)
	\right)^n
\right] \leqslant CN^{3n\omega + n\mathfrak{b}}
\le C N^{n(1 + 10^{-5})\mathfrak{b}}\label{e:rcp1}
\eeq
for some constant $C(\delta, \mathfrak{b}, n) > 0$.
%If we set $s=N^{-10^5 \delta_3}$ we obtain
%\beq\label{e:rcp1}
%\E\left[
%T_n(H_s) \one_{\tilde \B\cap  \mathcal E}
%\right]
% \le C{N^{(3\cdot 10^5 + 3)n\delta_3}}.
%\eeq
Note that for every $D >0$ there exists $C(D, \delta, \mathfrak{b})>0$ such that 
\begin{equation}\label{detB}
\P(\tilde \B^c)   + \P({\mathcal E}^c) \le C N^{-D},
\end{equation}
by \Cref{l:good2} and \eqref{e:que}. Using the deterministic bound $|G_{\alpha\beta}(z) | \le (\Im z)^{-1}$ in the definition of $T_n$ (in particular in \Cref{d:regvect}) we have the bound $T_n \le C N^{10 n}$, so we take $D=20n$. Together with \eqref{detB}, this shows
\beq\label{e:rcp2}
\E\left[
	T_n(M) \one_{(\tilde \B \cap \mathcal E)^c}
\right]
\le C
\eeq
for some constant $C(\delta, \mathfrak{b}, n) > 0$. 
Combining \eqref{e:rcp1} and \eqref{e:rcp2} completes the proof.
\end{proof}

We also require bounds on the derivatives of $T(H)$, which will be used in the next section. We begin with the following bounds on $Z(k,\ell)$, which are proved in \Cref{s:a1}.

\bel\label{l:obbounds}
Fix $\omega, \delta_1, \eps_1, \delta_2, \eps_2>0$, $\mathfrak{r}, \mathfrak{s}, \mathfrak{a}\geqslant 0$ such that $\delta_1 > \eps_1$, a family of orthogonal vectors $\q=(\q_\alpha)_{\alpha\in \unn{1}{N}}$ in $\S^{N-1}$, and $k, \ell \in \unn{1}{N}$.  
Let $H$ be a generalized Wigner matrix, and let $\B = \B(\omega,\delta_1,\varepsilon_1,\delta_2, \eps_2,\q)$ be the set from \Cref{d:goodset}. Let $\eta_k =\frac{N^{-\varepsilon_2}}{N^{2/3} \hat k^{1/3}}$ and $\eta_\ell =\frac{N^{-\varepsilon_2}}{N^{2/3} \hat \ell^{1/3}}$.
Set $z_1 = E_1+\I\eta_k$ and $z_2 = E_2+\I\eta_\ell$, and suppose $I_N \subset \unn{1}{N}$ satisfies $| I_N | \le N^{\mathfrak{r} +\mathfrak{s}}$.
Then there exists a constant $C= C(\eps_2)>0$ such that for all $E_1 \in I_{\delta_2}(\lambda_k) \cup I_{\delta_2}(\tilde \lambda_k)$ and $E_2 \in I_{\delta_2}(\lambda_\ell) \cup I_{\delta_2}(\tilde \lambda_\ell)$,
\begin{align}\label{e:Zbounds}
\sup_{a,b \in \unn{1}{N}} \sup_{w \in [0,1]} \one_{\B \cap \mathcal E} \left|  
Z \left(\Theta^{(a,b)}_w H\right)  \right| 
&\le C N^{ 2\mathfrak{s} + 2\mathfrak{a} + 8 \eps_2}  \left( \frac{\hat k}{N}\right)^{1/3} \left( \frac{\hat \ell}{N}\right)^{1/3},\\
\sup_{a,b \in \unn{1}{N}} \sup_{j \in \unn{1}{5}}  \sup_{w \in [0,1]} \one_{\B \cap \mathcal E} 
 \left| \partial^k_{ab} Z \left(\Theta^{(a,b)}_w H\right)
\right| 
&\le CN^{2\mathfrak{s} +2\mathfrak{a}+ 20k ( \omega  +  \eps_2)}   \left( \frac{\hat k}{N}\right)^{1/3} \left( \frac{\hat \ell}{N}\right)^{1/3}.
\end{align}
Here $\mathcal E = \mathcal E (\mathfrak{a} + \mathfrak{s}, \q, I)$.
We also have the bounds
\begin{align}\label{e:Ztrivial}
\sup_{a,b \in \unn{1}{N}} \sup_{w \in [0,1]}  \left|  
Z \left(\Theta^{(a,b)}_w H\right)  \right| 
&\le C N^{10},
\\
\sup_{a,b \in \unn{1}{N}} \sup_{j \in \unn{1}{5}}  \sup_{w \in [0,1]} 
 \left| \partial^k_{ab} Z \left(\Theta^{(a,b)}_w H\right)
\right| 
&\le CN^{20j} .
\end{align}
\eel
\begin{rmk}
In this lemma and the next, it is essential that the dependence on $\mathfrak{a}$ in the bounds on the high-probability set is $N^{2\mathfrak{a}}$, and not some higher power of $N^\mathfrak{a}$. Otherwise, the comparison argument in the next section would not go through.
\end{rmk}

Given \Cref{l:obbounds}, the proof of the following theorem is nearly identical to that of \cite{benigni2020optimal}*{Lemma 4.13}, so we omit it.
\bel\label{l:Tderiv}
Let $H$ be a generalized Wigner matrix. Fix $\delta_1, \eps_1, \delta_2, \eps_2, \omega\in (0,1)$, $\mathfrak{r}, \mathfrak{s}, \mathfrak{a} \in [0,1)$, a family of orthogonal vectors $\q=(\q_\alpha)_{\alpha\in \unn{1}{N}}$ in $\S^{N-1}$, and suppose $\delta_1 > \eps_1$.
Let $I \subset \unn{1}{N}$ satisfy $| I  | \le N^{\mathfrak{r} + \mathfrak{s}}$,
and denote $\B= \B(\omega,\delta_1,\varepsilon_1,\delta_2,\q)$, $\mathcal E = \mathcal E (\mathfrak{a} + \mathfrak{s}, \q, I)$.
Then there exists $C = C(\eps_2) >0$ such that for all $k,\ell \in \unn{1}{N}$ and $j \in \unn{1}{5}$, we have
\beq\label{e:Tderiv}
\sup_{a,b,c,d \in \unn{1}{N} } \sup_{0\leqslant w \leqslant 1}  \one_{\mathcal B\cap \mathcal E} \left| \partial_{ab}^j T(\Theta^{(c,d)}_w H) \right| 
\le C N^{ 2\mathfrak{a}+2\mathfrak{s} + 40 j  (\omega  + \delta_1 + \eps_1 + \delta_2 + \eps_2)},  \eeq
and
\beq\label{e:Tderiv2}
 \sup_{a,b,c,d \in \unn{1}{N} } \sup_{0\leqslant w \leqslant 1}   \left| \partial_{ab}^j T(\Theta^{(c,d)}_w H) \right| \le C N^{Cj},
\eeq
where we recall $T(M)$ was defined in \eqref{e:Tdef}.
\eel

\section{\texorpdfstring{Proof of \Cref{t:main2}}{Proof of Theorem 1.4}}\label{s:t2}

In this section and the next, we fix the choice of parameters made in the statement of \Cref{l:dynamichighmom}; additionally, we will fix $\omega$ and $\mathfrak{s}$. Given $\delta >0$, we set
\beq\label{e:paramchoice}
\delta_1 = \delta, \quad  \eps_2 = \delta_1/10^2, \quad \delta_2 = \delta_1/10^3, \quad \eps_1 = \delta_1/10^4, \quad  \omega = \mathfrak{s} =\delta_1/ 10^5%, \quad \mathfrak{b} = \nu/2.
\eeq
We remark that \eqref{e:paramchoice} implies $\delta_1 > \eps_2 > \delta_2 > \eps_1$, and therefore these choices satisfy the hypotheses of all results in the previous section, including \Cref{l:good2} and \Cref{l:boundregeig}. (For  \Cref{l:boundregeig}, we also need that $\mathfrak{a}$ is small relative to $\eps_2, \delta_2$).
Given the definitions in \eqref{e:paramchoice}, $T(H)$ is determined by the choice of eigenvalue indices $k, \ell \in \unn{1}{N}$, $I_N \subset\unn{1}{N}$, and the free parameter $\delta$. For the proof of the next lemma, we follow closely the induction scheme used to prove \cite{benigni2020optimal}*{Lemma 5.1}.

\bel \label{l:newcomparison}
%There exists $\delta >0$ such that the following holds.
Fix $\mathfrak{r}, \mathfrak{b} \geqslant 0$, a family of orthogonal vectors $\q=(\q_\alpha)_{\alpha\in \unn{1}{N}}$ in $\S^{N-1}$, and let $H$ be a generalized Wigner matrix.
Then for every  $ n \in \N$, there exists $\delta(\mathfrak{b})>0$ and $C(\mathfrak{b},n)>0$ such that if $H$ satisfies the bootstrap hypothesis $\mathcal Q (\mathfrak{r}, (21/40) \mathfrak{b} ,\q)$, then
for all $k, \ell \in \unn{1}{N}$ and all $I_N \subset \unn{1}{N}$ satisfying $|I_N| \le N^{\mathfrak{r} + \mathfrak{s}}$, we have
\beq\label{e:conclusion1}
\sup_{a,b \in \unn{1}{N} } \sup_{w \in [0,1]} \E \left[ T_n \left(\Theta^{(a,b)}_w H \right)\right]  \le C N^{ n( 1 + 10^{-5}) \mathfrak{b}}.
\eeq
Here $\mathfrak{s}$ and the parameters in the definition of $T_n$ are chosen as in \eqref{e:paramchoice} in terms of $\delta$.
\eel
\bp
Set $s_1 = N^{-\mathfrak{b}/4}$ as in Lemma \ref{l:dynamichighmom} and let $H_s$ denote the dynamics defined in \eqref{e:dysondyn}.
By \cite{erdos2017dynamical}*{Lemma 16.2},
 there exists a generalized Wigner matrix $H_0$ such that the matrix $Q= H_{s_1}$ satisfies $\E[h_{ij}^k] = \E[q_{ij}^k]$ for $k\in\unn{1}{3}$ and $\left| \E[h_{ij}^4 ]  -  \E[q_{ij}^4 ] \right| \le C N^{-2} s_1$, where $C>0$ is a constant depending only on the constants used to show that $H_0$ satisfies \Cref{d:wigner}. Fix a bijection
\beq
\psi \colon \{ ( i, j) : 1 \le i \le j \le N\} \rightarrow \unn{1}{\gamma_N},
\eeq
where $\gamma_N = N ( N + 1) /2$, and define the matrices $H^1, H^2, \dots , H^{\gamma_N}$ by 
\beq
h_{ij}^\gamma = 
\begin{cases}
h_{ij} & \text{if } \psi(i,j) \leq \gamma
\\
q_{ij} & \text{if } \psi(i,j) > \gamma
\end{cases}
\eeq
for $i \le j$.

Fix any $\gamma \in \unn{1}{\gamma_N}$ and consider the index pair $(i,j)$ such that $\psi(i,j) = \gamma$. For any $p \ge 1$, Taylor expanding $T_p \left(H^\gamma\right)$ in the $(i,j)$ entry gives

\begin{align}
\label{e:taylora11} T_p \left( H^\gamma \right) - T_p \left(  \Theta^{(i,j)}_0 H^\gamma \right) &= \partial T_p \left(  \Theta^{(i,j)}_0 H^\gamma \right) h_{ij} + \frac{1}{2!} \partial^2 T_p \left(  \Theta^{(i,j)}_0 H^\gamma \right) h_{ij}^2 
+ \frac{1}{3!}\partial^3 T_p \left(  \Theta^{(i,j)}_0 H^\gamma \right) h_{ij}^3 \\
& \label{e:taylora21} + \frac{1}{4!}\partial^4 T_p \left(  \Theta^{(i,j)}_0 H^\gamma \right) h_{ij}^4 + \frac{1}{5!}\partial^5 T_p \left(  \Theta^{(i,j)}_{w_1(\gamma)} H^\gamma \right) h_{ij}^5,
\end{align}
where we write $\partial = \partial_{ij}$ and $w_1(\gamma) \in [0,1]$ is a random variable depending on $h_{ij}$. Similarly, we  expand $T_p \left( H^{\gamma -1} \right)$ and obtain
\begin{align}
\label{e:taylorb11} T_p \left( H^{\gamma -1} \right) - T_p \left(  \Theta^{(i,j)}_0 H^\gamma \right) 
&= \partial T_p \left(  \Theta^{(i,j)}_0 H^\gamma \right) q_{ij} + \frac{1}{2!} \partial^2 T_p \left(  \Theta^{(i,j)}_0 H^\gamma \right) q_{ij}^2 
+ \frac{1}{3!}\partial^3 T_p \left(  \Theta^{(i,j)}_0 H^\gamma \right) q_{ij}^3 \\
& \label{e:taylorb21} + \frac{1}{4!}\partial^4 T_p \left(  \Theta^{(i,j)}_0 H^\gamma \right) q_{ij}^4 + \frac{1}{5!}\partial^5 T_p \left(  \Theta^{(i,j)}_{w_2(\gamma)} H^\gamma \right) q_{ij}^5,
\end{align}
where $w_2(\gamma) \in [0,1]$ is a random variable depending on $q_{ij}$. Subtracting \eqref{e:taylorb11} and \eqref{e:taylorb21} from \eqref{e:taylora11} and \eqref{e:taylora21} gives
\begin{align}
 \label{e:fourthorder1} \E \left[ T_p \left( H^{\gamma } \right)  \right]- \E \left[ T_p \left(   H^{\gamma -1} \right) \right] &= \frac{1}{4!} \E \left[ \partial^4 T_p \left(  \Theta^{(i,j)}_0 H^\gamma \right) h_{ij}^4 \right] - \frac{1}{4!}\E \left[ \partial^4 T_p \left(  \Theta^{(i,j)}_0 H^\gamma \right) q_{ij}^4 \right] \\
  \label{e:fifthorder1} &+ \frac{1}{5!} \E \left[ \partial^5 T_p \left(  \Theta^{(i,j)}_{w_1(\gamma)} H^\gamma \right) h_{ij}^5\right]
  - \frac{1}{5!} \E \left[ \partial^5 T_p \left(  \Theta^{(i,j)}_{w_2(\gamma)} H^\gamma \right) q_{ij}^5 \right].\end{align}
Here we took expectation and used that $\Theta^{(i,j)}_0 H^\gamma$ is independent from $h_{ij}$ and $q_{ij}$, and $\E [ h^k_{ij} ] = \E [ q^k_{ij} ]$ for $k \in \unn{1}{3}$.

By \Cref{l:dynamichighmom} and our choice of $s_1$, there exists $C_0(\delta, \mathfrak{b}, n) > 0$ such that for any $p \in \N$ with $p \le n$,
\beq\label{e:TRn}
\E \left[ T_{ p } (Q) \right] \le C_0N^{ p ( 1 + 10^{-5}) \mathfrak{b}}.
\eeq

We will now use \eqref{e:TRn}, \eqref{e:fourthorder1}, and \eqref{e:fifthorder1} to show that $\E \left[ T_p (H) \right] \le 2 C_0 N^{ p ( 1 + 10^{-5}) \mathfrak{b}}$ for all $p \le n$.
We use induction, with the induction hypothesis at step $p \in \N$ being that 
\beq\label{e:inductionhypo1}
 \E \left[ T_q \left(  \Theta^{(a,b)}_{w } H^\gamma \right)\right]  \le 3 C_0 N^{ q( 1 + 10^{-5}) \mathfrak{b}}.
\eeq
holds for all $0 \le q \le p$ and choices of  $w \in [ 0 ,1 ]$ and $(a,b) \in \unn{1}{N}^2$.

The base case $p = 0$ is trivial. Assuming the induction hypothesis holds for $p -1$, we will show it holds for $p$. Using the independence of $h_{ij}$ and $q_{ij}$ from $ \Theta^{(i,j)}_0 H^\gamma$, we write the first two terms on the right side of \eqref{e:fourthorder1} as 
\beq\label{613}
\E \left[ \partial^4 T_p \left(  \Theta^{(i,j)}_0 H^\gamma \right) h_{ij}^4 \right] - \E \left[ \partial^4 T_p \left(  \Theta^{(i,j)}_0 H^\gamma \right) q_{ij}^4 \right] =  \E \left[ \partial^4 T_p \left(  \Theta^{(i,j)}_0 H^\gamma \right)  \right]  \E \left[  h_{ij}^4 - q_{ij}^4  \right].
\eeq
For the second factor in \eqref{613}, we compute $\left|  \E \left[ h_{ij}^4\right] - \E \left[ q_{ij}^4 \right]  \right|  \le C N^{-2}s_1 =  C N^{-2 - \mathfrak{b}/4}$. For the first, we compute 
\begin{align}\label{e:4thcompute1}
\partial^4 T_p  = \partial^4 \left( T^p \right) &= p T_{p-1} T^{(4)}  + 3 p (p - 1 ) T_{p-2} (T^{(2)})^2 + p (p-1) (p-2) (p-3) T_{p-4} (T')^4 \\
& + 4 p (p-1) T_{p-2} T^{(1)} T^{(3)} + 6 p (p-1) (p-2) T_{p-3} (T')^2 T^{(2)}.
\end{align}
Set $\mathfrak{a} = (21/40) \mathfrak{b}$ and $\F= \B(\omega,\delta_1,\varepsilon_1,\delta_2,  \q)\cap \mathcal E (\mathfrak{a} + \mathfrak{s}, \q, I)$. Using  $T_q \ge 0$ and the induction hypothesis \eqref{e:inductionhypo1} for $q \le p -1$, and \eqref{e:Tderiv}, we find that
\beq\label{e:4analogy1}
\left|  \E \left[ \one_{\mathcal F} \partial^4 T_p \left(  \Theta^{(i,j)}_0 H^\gamma \right)  \right]  \right| \le K C_0  N^{p ( 1 + 10^{-5})\mathfrak{b} + \mathfrak{b}/5  + 8\mathfrak{s}+160(\omega + \delta_1 + \eps_1 + \delta_2 + \eps_2)}.
\eeq
for some $K(\delta, \mathfrak{b}, n) > 0$. To see this, we note the worst term in \eqref{e:4thcompute1} is the one containing $T_{p-4} (T')^4$, which we bound as 
\begin{align}
\E\left[  \one_{\mathcal F} T_{p-4} (T')^4\right]
&\le K N^{ 8\mathfrak{a} + 8\mathfrak{s} + 160  (\omega  + \delta_1 + \eps_1 + \delta_2 + \eps_2)} \E\left[  \one_{\mathcal F} T_{p-4} \right]\\ &\le 
KC_0  N^{ (p -4) ( 1 + 10^{-5})\mathfrak{b} + 8\mathfrak{a} + 8\mathfrak{s} + 160  (\omega  + \delta_1 + \eps_1 + \delta_2 + \eps_2)}\\
&\le 
KC_0  N^{ p ( 1 + 10^{-5})\mathfrak{b} + \mathfrak{b}/5 + 8\mathfrak{s} + 160  (\omega  + \delta_1 + \eps_1 + \delta_2 + \eps_2)}
\end{align}
using the hypothesis that $8 \mathfrak{a} \le (4 + 1/5)\mathfrak{b}$. 

Next, by \eqref{e:Tderiv2}, \Cref{l:good1}, \Cref{l:bootstrap}, and the bootstrap assumption $\mathcal Q(\mathfrak{r},\mathfrak{a},\q)$, we find 
\beq\label{e:4analogytrivial1}
\left|\E \left[  \one_{\mathcal F^c}  \partial^4 T_p \left(  \Theta^{(i,j)}_0 H^\gamma \right)  \right]  \right| \le K N^{-2}
\eeq
where we increased $K(\delta, \mathfrak{b}, n)$ if needed.\footnote{The constants in the probability bound given by \Cref{l:good1} do not depend on the choice of $\gamma$, since the $H^\gamma$ satisfy \Cref{d:wigner} simultaneously for a single choice of constants. Therefore, the constant in \eqref{e:4analogytrivial1} is uniform in $\gamma$.}

 It follows from \eqref{613}, \eqref{e:4analogy1} and \eqref{e:4analogytrivial1} that if $\delta = \delta_1$ is chosen small enough in a way that depends only on $\mathfrak{b}$, so that (recall \eqref{e:paramchoice})
\beq\label{e:setparams}
10\mathfrak{s}+200(\omega  + \delta_1 + \eps_1 + \delta_2 + \eps_2)<  \mathfrak{b}/10^3,
\eeq
  then 
\beq\label{e:4final1}
\left| \E \left[ \partial^4 T_p \left(  \Theta^{(i,j)}_0 H^\gamma \right) h_{ij}^4 \right] - \E \left[ \partial^4 T_p \left(  \Theta^{(i,j)}_0 H^\gamma \right) q_{ij}^4 \right] \right| \le  K C_0  N^{-2 - \mathfrak{b}/21 + p( 1 + 10^{-5})\mathfrak{b}}.
\eeq
holds; we now fix $\delta(\mathfrak{b})$ so that \eqref{e:setparams} holds. (The bound \eqref{e:setparams} is stronger than necessary for the fourth order term, but will be needed for the fifth order remainder term.) Therefore, if $ N \ge N_0 (\delta, \mathfrak{b}, n)$, then
\beq\label{e:4final2}
\left| \E \left[ \partial^4 T_p \left(  \Theta^{(i,j)}_0 H^\gamma \right) h_{ij}^4 \right] - \E \left[ \partial^4 T_p \left(  \Theta^{(i,j)}_0 H^\gamma \right) q_{ij}^4 \right] \right| \le  \frac{1}{2} C_0 N^{-2 + p( 1 + 10^{-5})\mathfrak{b}}.
\eeq

Let $\mathcal D$ be the event where $\sup_{i,j} | q_{ij} | + | h_{ij} |\le C N^{-1/2 + \delta_1}$ holds. Since the variables $q_{ij}$ and $h_{ij}$ have all moments finite, for every $D>0$ there exists $C(D)>0$ such that
\beq\label{e:Csubexp}
\P\left( \mathcal D^c \right) \le C N^{-D}.
\eeq
For the terms in \eqref{e:fifthorder1}, we compute
\begin{align}
\left| \E \left[ \partial^5 T_p \left(  \Theta^{(i,j)}_{w_1(\gamma)} H^\gamma \right) h_{ij}^5\right] \right| 
&\le \left| \E  \left[  \one_{\mathcal D} \partial^5 T_p \left(  \Theta^{(i,j)}_{w_1(\gamma)} H^\gamma \right) h_{ij}^5\right] \right| + \left| \E \left[ \one_{\mathcal D^c} \partial^5 T_p \left(  \Theta^{(i,j)}_{w_1(\gamma)} H^\gamma \right) h_{ij}^5\right] \right|\\
&\le C N^{-5/2 + 5 \delta_1} \left( \E \left[ \left| \partial^5 T_p \left(  \Theta^{(i,j)}_{w_1(\gamma)} H^\gamma \right) \right| \right]  + 1\right),
\end{align}
for some $C(\delta_1) > 0$. In the last line, we used \eqref{e:Csubexp} and the inequality of \eqref{e:Tderiv2}. An analogous bound holds for the second term in \eqref{e:fifthorder1}. 

Similarly to the proof of \eqref{e:4analogy1} and \eqref{e:4analogytrivial1},  using \eqref{e:setparams} we find that there exists $C(\delta)>0$ such that
\begin{align}\label{e:5finala}
\Bigg|
\E \left[ \partial^5 T_p \left(  \Theta^{(i,j)}_{w_1(\gamma)} H^\gamma \right) h_{ij}^5\right]
  -   \E &\left[ \partial^5 T_p \left(  \Theta^{(i,j)}_{w_2(\gamma)} H^\gamma \right) q_{ij}^5 \right]
  \Bigg|\\
&\le  K C_0  N^{-5/2 + 5 \delta_1 + p ( 1 + 10^{-5})\mathfrak{b} + 10\mathfrak{a} + 10\mathfrak{s} + 200(\omega  + \delta_1 + \eps_1 + \delta_2 + \eps_2)}\\
& \le K C_0  N^{-2 - 1/8 + p ( 1 + 10^{-5})\mathfrak{b}} ,
\end{align}
where we increased the value of $K$, if necessary.
Therefore, if $ N \ge N_0 (\delta, \mathfrak{b}, n)$, then
\beq\label{e:5final2a}
\left|
\E \left[ \partial^5 T_p \left(  \Theta^{(i,j)}_{w_1(\gamma)} H^\gamma \right) h_{ij}^5\right]
-  \frac{1}{5!} \E \left[ \partial^5 T_p \left(  \Theta^{(i,j)}_{w_2(\gamma)} H^\gamma \right) q_{ij}^5 \right]
\right|
\le \frac{1}{2} C_0 N^{-2 + p( 1 + 10^{-5})\mathfrak{b}}.
\eeq

Together \eqref{e:4final2} and \eqref{e:5final2a} yield
\beq\label{e:gammainc}
\left| \E \left[ T_p \left( H^{\gamma } \right)  \right]- \E \left[ T_p \left(   H^{\gamma -1} \right) \right] \right| \le C_0 N^{- 2 + p( 1 + 10^{-5})\mathfrak{b}},
\eeq
and summing \eqref{e:gammainc} over all $\gamma_N$ pairs $(i,j)$, we find
\beq\label{e:4momentconclude1}
\left| \E \left[ T_p \left( Q \right)  \right]- \E \left[ T_p \left( H^\gamma \right) \right] \right| \le C_0 N^{p( 1 + 10^{-5})\mathfrak{b}}
\eeq
for any $\gamma$. Using \eqref{e:TRn} and \eqref{e:4momentconclude1}, we obtain 
\beq
\E\left[ T_p\left( H^\gamma \right) \right] \le 2 C_0 N^{p( 1 + 10^{-5})\mathfrak{b}}.
\eeq
 This verifies the induction hypothesis \eqref{e:inductionhypo1} when $w = 1$. 

To address other values of $w$, we consider the following expansion:
\begin{align}
\label{e:taylorc1a} T_p \left( H^\gamma \right) - T_p \left(  \Theta^{(a,b)}_w H^\gamma \right) &= \partial T_p \left(  \Theta^{(a,b)}_0 H^\gamma \right) h_{ij} + \frac{1}{2!} \partial^2 T_p \left(  \Theta^{(a,b)}_0 H^\gamma \right) h_{ij}^2 
+ \frac{1}{3!}\partial^3 T_p \left(  \Theta^{(a,b)}_0 H^\gamma \right) h_{ij}^3 \\
& \label{e:taylorc2a} + \frac{1}{4!}\partial^4 T_p \left(  \Theta^{(a,b)}_0 H^\gamma \right) h_{ij}^4 + \frac{1}{5!}\partial^5 T_p \left(  \Theta^{(a,b)}_{\tau(w)} H^\gamma \right) h_{ij}^5,
\end{align}
Here $\tau(w) \in [ 0, 1]$ is a random variable. The same argument that gave  \eqref{e:5finala} shows that the right side of \eqref{e:taylorc1a} and \eqref{e:taylorc2a} may be bounded in absolute value by $ C_0 N^{p ( 1 + 10^{-5})\mathfrak{b}}$ for $N \ge N_0(\delta, \mathfrak{b}, n)$. %The second term of \eqref{e:taylorc2a} is also bounded in absolute value by $\frac{1}{2}C_0 N^{p( 1 + 10^{-5})\delta_3}$. 
(Note that we are only controlling a single difference, instead of the sum of $N^2$ differences as before.)
We  conclude
\beq
\sup_{w \in [0,1]} \sup_{a,b \in \unn{1}{n}} \E \left[ T_p \left(  \Theta^{(a,b)}_{w } H^\gamma \right)\right]  \le 3 C_0 N^{p( 1 + 10^{-5})\mathfrak{b}}.
\eeq
This finishes the induction step and completes the proof.
\ep

\bp[Proof of \Cref{t:main2}]
Set $\mathfrak{b} = \eps$, where $\eps$ was fixed in the hypotheses of the theorem. Fix $\delta(\mathfrak{b})>0$ to the value given by \Cref{l:newcomparison}. Observe that this also fixes $\mathfrak{s} >0$, via \eqref{e:paramchoice}.

Note that by eigenvector delocalization \eqref{e:rigidity}, $H$ satisfies $\mathcal Q(0, (21/40)\mathfrak{b},\q)$.
With this as the base case, we will use induction to complete the proof;
in particular, we will show that if $H$ satisfies $\mathcal Q(\mathfrak{r}, (21/40)\mathfrak{b},\q)$, then it satisfies $\mathcal Q(\mathfrak{r} + \mathfrak{s}, (21/40)\mathfrak{b}, \q)$. This induction terminates when it reaches index sets $I$ of size $|I| \ge N$, at which point the proof is complete by the definition of $\mathcal Q$ and $\mathfrak{b}$. Various constants will increase at each step, but since the induction terminates in a finite number of steps (a number which depends only on $\mathfrak{b}$), these increases are not problematic. 

Now suppose $H$ satisfies $\mathcal Q(\mathfrak{r}, (21/40) \mathfrak{b} ,\q)$. By \Cref{l:newcomparison},
for every $I_N \subset \unn{1}{N}$ with $|I_N| \le N^{\mathfrak{r}+\mathfrak{s}}$,  $k, \ell \in \unn{1}{N}$, there exists $C( \mathfrak{b},n)>0$ such that \eqref{e:conclusion1} holds. By applying Markov's inequality with a sufficiently high moment $n$, this shows that for every $ D>0$, there exists $C(D,\mathfrak{b})>0$ such that 
\begin{equation}\label{returnto}
\sup_{a,b \in \unn{1}{N} } \sup_{k,\ell} \P \left( v \left(\Theta^{(a,b)}_w H\right)  \ge N^{( 1 + 2\cdot 10^{-5}) \mathfrak{b}} \right) \le C N^{-D}
\end{equation}
for any fixed $w \in [ 0 ,1]$, where recall that $T(M) = v(M, k, \ell)$, and $v$ is defined using $I_N$.
A union bound gives
\beq\label{prev2}
\P\left( \bigcup_{a,b \in \unn{1}{N} }\bigcup_{k,\ell \in \unn{1}{N}} 
 \left\{  v \left(\Theta^{(a,b)}_w H\right)  \ge N^{( 1 + 2\cdot 10^{-5})\mathfrak{b}} \right \} \right) 
 \le C N^{-D}.
 \eeq
 We now claim that we can take the union over $w\in [ 0, 1]$ inside the parentheses in \eqref{prev2}. In particular, we claim that a standard stochastic continuity argument reduces this task to taking a union over $N^C$ discrete values $w \in [0,1]$, which shows the result after increasing $D$ appropriately. Indeed, $v \left(\Theta^{(a,b)}_wH,I_N \right)$ is $N^C$-Lipchitz continuous in $w$ by \eqref{e:Tderiv2} applied with $(a,b) = (c,d)$. We conclude that
 \beq\label{prev22}
\P\left( \bigcup_{w \in [ 0 ,1]} \bigcup_{a,b \in \unn{1}{N} }\bigcup_{k,\ell \in \unn{1}{N}} 
 \left\{  v \left(\Theta^{(a,b)}_w H\right)  \ge N^{( 1 + 2\cdot 10^{-5}) \mathfrak{b}} \right \} \right) 
 \le C N^{-D}.
 \eeq
 
 Then using \eqref{e:boundedreclaim4} with $\mathfrak{a} = (21/40)\mathfrak{b}$, \eqref{simultaneous},  the induction hypothesis $\mathcal Q(\mathfrak{r}, (21/40)\mathfrak{b} ,\q)$ to bound the  probability $\P\left( \mathcal E (\mathfrak{a},\q, I)^c\right)\le CN^{-D}$, and a union bound,
we have
\beq\label{prev}
\P\left( \bigcup_{w \in [ 0 ,1]}   \bigcup_{a,b \in \unn{1}{N} }\bigcup_{k,\ell \in \unn{1}{N}} 
 \left\{ \left| \hat p_{k \ell} (\Theta^{(a,b)}_wH,I_N)\right| \ge  N^{ (21/40)\mathfrak{b}} \right \} \right) \le C N^{-D},
 \eeq
 where $C(D)$ is independent of the choice of $I_N$. We used that the exponent of $N$ in the error term of \eqref{e:boundedreclaim4} is strictly less than $2 \mathfrak{a}$ with our choice of parameters \eqref{e:paramchoice} and that $1/2+10^{-5}<21/40$. This proves that $\Q(\mathfrak{r} + \mathfrak{s}, (21/40)\mathfrak{b}, \q)$ holds, completing the induction step (and the proof).
 \ep

\section{\texorpdfstring{Proof of \Cref{t:main1}}{Proof of Theorem 1.2}}\label{s:t1}

We begin by defining a new regularized observable.

\bed\label{d:regvect2}
Suppose $M \in \matn$, and fix $\delta_1, \eps_1, \delta_2, \eps_2 >0$ and a family of orthogonal vectors $\q=(\q_\alpha)_{\alpha\in \unn{1}{N}}$ in $\S^{N-1}$. Let $\tilde \lambda_i = \tilde \lambda_{i, \delta_1, \eps_1}(M)$ denote the regularized eigenvalues of \Cref{p:regulareigval}. 
For $\ell, \alpha \in\unn{1}{N}$, we define the regularized and centered eigenvector entries of $M$ by
\beq\label{e:regvect}
v_\ell(\alpha ) = v_\ell(M, \alpha, \delta_1, \eps_1, \delta_2,\varepsilon_2) = \frac{1}{\pi}
\int_{\hat I_{\delta_2}(\tilde{\lambda}_\ell)}\Im \left( \scp{\q_\alpha}{G(E+\I\eta_\ell)\q_\alpha} -  m_N(E +\I \eta_\ell) \right)\, \d E,
\quad
\eta_\ell = \frac{N^{-\varepsilon_2}}{N^{2/3} \hat \ell^{1/3}}.
\eeq 
We define the regularized ergodicity observables for $M$ by
\begin{equation}
q_{\ell \ell }(I_N)  =  q_{\ell \ell } (M, I_N,\q) = \frac{N}{ \sqrt{ | I_N |} }  \sum_{\alpha\in I_N} v_\ell(\alpha) .
\end{equation}

\eed
The proof of the following is similar to \Cref{l:boundregeig} and uses \Cref{t:main2}; we omit it.
\bel\label{l:boundregeig2}
Let $H$ be a generalized Wigner matrix. Fix $\omega,\delta_1,\varepsilon_1,\delta_2, \eps_2> 0$ with $\delta_1 > \eps_1$ and $\eps_2 > \delta_2$ and a family of orthogonal vectors $\q=(\q_\alpha)_{\alpha\in I}$ in $\S^{N-1}$. Suppose $I_N \subset \unn{1}{N}$.
Then  
for all $\ell\in\unn{1}{N}$, we have
\beq\label{e:quecompare}
\one_{\tilde \B \cap \mathcal E} q_{\ell \ell}(I_N)
=
\one_{\tilde \B \cap \mathcal  E} \hat p_{\ell \ell} (I_N)
+ \mathcal O_{\omega, \eps_2} \left(
N^{3\omega  + \delta_2 - \eps_2} 
+ N^{\omega + \eps_1+ 4 \epsilon_2 - \delta_1}
\right) 
\eeq
where $\tilde \B = \tilde \B(\omega,\delta_1,\varepsilon_1,\delta_2, \eps_2,0,\q, \ell)$ and $\mathcal E = \mathcal E ( \omega,\q, I_N)$.
\eel
\begin{rmk}
Since we take $\delta_3=0$ in the previous lemma, we can no longer appeal to \Cref{l:good2} to show that $\tilde B$ holds with high probability. Instead, we will use \Cref{l:lr2} below. The estimate on $\P(\tilde \B^c)$ it yields is weaker, but still sufficient for our purposes.
\end{rmk}

The following bounds are proved in \Cref{s:a1}.
\bel\label{l:qintegrands2}
Fix $ \omega, \delta_1, \eps_1, \delta_2, \eps_2>0$ such that $\delta_1 > \eps_1$, a family of orthogonal vectors $\q=(\q_\alpha)_{\alpha\in \unn{1}{N}}$ in $\S^{N-1}$, and $\ell \in \unn{1}{N}$.  
Let $H$ be a generalized Wigner matrix, and let $\B = \B(\omega,\delta_1,\varepsilon_1,\delta_2, \eps_2,\q)$ be the set from \Cref{d:goodset}. Let $\eta_\ell =\frac{N^{-\varepsilon_2}}{N^{2/3} \hat \ell^{1/3}}$ and suppose $I_N \subset \unn{1}{N}$.
Then there exists a constant $C= C(\eps_2)>0$ such that for all $E \in I_{\delta_2}(\lambda_\ell) \cup I_{\delta_2}(\tilde \lambda_\ell)$,

\beq\label{e:intque}
\sup_{a,b \in \unn{1}{N}} \sup_{w \in [0,1]} \one_{\B \cap \mathcal E} \frac{1}{ \sqrt{| I_N|}} \left| \sum_{ \alpha \in I_N} \Im \left( \Theta^{(a,b)}_w\scp{\q_\alpha}{G(E+\I\eta_\ell)\q_\alpha} -   \Theta^{(a,b)}_w m_N(E + \I\eta_\ell) \right)\right| 
\le C N^{  \omega + 4 \eps_2}   \left( \frac{\hat \ell}{N}\right)^{1/3},
\eeq

\begin{multline}
\sup_{a,b \in \unn{1}{N}} \sup_{j \in \unn{1}{5}}  \sup_{w \in [0,1]} \one_{\B \cap \mathcal E} \frac{1}{ \sqrt{| I_N|}} \left| \partial^k_{ab} \sum_{ \alpha \in I_N} \Im \left( \Theta^{(a,b)}_w\scp{\q_\alpha}{G(E+\I\eta_\ell)\q_\alpha} -   \Theta^{(a,b)}_w m_N(E + \I\eta_\ell) \right)\right| 
\\ \le CN^{10 k (  \omega +  \eps_2)}   \left( \frac{\hat \ell}{N}\right)^{1/3}.
\end{multline}
Here $\mathcal E = \mathcal E ( \omega, \q , I_N)$.
\eel

 For $M \in \matn$, $I_N \subset \unn{1}{N}$, $\ell \in \unn{1}{N}$, and $n \in \N$, we define 
\beq\label{e:Sdef}
S(M) = S(M,  \ell) = N   q_{\ell \ell}(M, I_N, \q), \qquad S_n(M) = S(M)^n.
\eeq 
Given \Cref{l:qintegrands2}, the proof of the following lemma is essentially identical to the proof of \Cref{l:obbounds}, so we omit it. %We do not need to carefully track the powers of $N^\omega$ here, as was done previously.
\bel\label{l:Sderiv}
Let $H$ be a generalized Wigner matrix. Fix $\eps_1, \delta_1, \eps_2, \delta_2, \omega \in (0,1)$,
suppose $\delta_1 > \eps_1$,
and denote $\B= \B(\omega,\delta_1,\varepsilon_1,\delta_2,\q)$, $\mathcal E = \mathcal E (\omega,\q, I_N)$.
Then there exists $C = C(\eps_2) >0$ such that for all $k,\ell \in \unn{1}{N}$ and $j \in \unn{1}{5}$, we have
\beq\label{e:Sderiv}
\sup_{a,b,c,d \in \unn{1}{N} } \sup_{0\leqslant w \leqslant 1}  \one_{\mathcal B\cap \mathcal E} \left| \partial_{ab}^j S (\Theta^{(c,d)}_w H) \right| 
\le C N^{ 40 j  ( \omega  + \delta_1 + \eps_1 + \delta_2 + \eps_2)},  \eeq
and
\beq\label{e:Sderiv2}
 \sup_{a,b,c,d \in \unn{1}{N} } \sup_{0\leqslant w \leqslant 1}   \left| \partial_{ab}^j S (\Theta^{(c,d)}_w H) \right| \le C N^{Cj}.
\eeq
\eel

We also require the following level repulsion estimate.

\bep[{{\cite{benigni2020optimal}*{Proposition 5.7}}}]\label{l:lr2}
Let $H$ be a generalized Wigner matrix. Then there exists $\delta_0 >0$ such that for all $\delta \in (0, \delta_0)$, there exist constants $C = C(\delta)$ and $c=c(\delta) > 0$ such that for any $i \in \unn{1}{N -1 }$, 
\beq\label{e:optlr}
\P\left( \lambda_{i+1} - \lambda_i < \frac{ N^{-\delta} }{N^{2/3} {\hat i}^{1/3} } \right) < C(\delta) N^{-\delta - c}.
\eeq
\eep

Given the previous bounds, the proof of \Cref{t:main1} is very similar to the proof of \Cref{t:main2}. For brevity, we provide only a sketch.

\bp[Sketch of the proof of \Cref{t:main1}]
Let $i_N$ and $\eps$ be as in the statement of the theorem.
We can find $c(\eps,n)>0$ such that the moment convergence for $H_{s_1}$ with $s_1 = N^{-c}$ is true by Corollary \ref{c:quemom}. Note that such a time $s_1$ exists due to our assumption that $\vert I\vert \le N^{1-\varepsilon}$. As in the proof of \Cref{l:newcomparison}, let $R = H_{s_1}$ be a matrix that matches $H$ to three moments exactly, with $N^{-2}s_1$ error in the fourth moment. Using  \Cref{l:Sderiv} and applying four moment matching, we find that for any $n\in \N$, there exists $\delta>0$ such that 
\beq \left| S_n(H, i_N)  - S_n(R, i_N) \right| \le  C N^{-c}\eeq
 for some constants $C(n,\eps), c(n,\eps)>0$, where we choose the parameters for $S$ as in \eqref{e:paramchoice}. Given that we know the desired moment convergence for $R$ by \Cref{c:quemom}, it suffices to show that $\left| S_n(H, i_n)  - \E\left[\hat p^n_{ii} \right]\right| \le  C N^{-c}$ and similarly for $R$. We give the proof for $H$ only, since it will apply to any generalized Wigner matrix.
 
 By \eqref{e:quecompare}, we have 
\beq\label{e:quecompare2}
\one_{\tilde \B \cap \mathcal E} q^n_{ii}(I_N)
=
\one_{\tilde \B\cap \mathcal E} \left( \hat p _{ii} (I_N)
+ \O{N^{-c}  }\right)^n
\eeq
for some $c(\delta) > 0$ and any $n \in \N$. Expanding the product and taking expectation, we  conclude that 
 \begin{equation}
 \E \left[ \one_{\tilde \B \cap \mathcal E} q^n_{ii}(I_N) \right] = \E \left[ \one_{\tilde \B \cap\mathcal E} p^n_{ii}(I_N)\right] + \O{N^{-c/2}}.
 \end{equation}
 after taking $\omega (\delta) > 0$ small enough in $\mathcal E(\omega, \q, I_N)$.

 Finally, we use \Cref{l:good1}, \Cref{l:lr2}, and \Cref{t:main2} to conclude that 
  \begin{equation}
  \E \left[ \one_{ \left(\tilde \B \cap\mathcal E\right)^c} p^n_{ii}(I_N)\right] = \O{N^{-c/2}}.
 \end{equation}
 This completes the proof.
 \ep

\appendix

\section{Resolvent estimates}
\label{s:a1}

The following lemma was proved in \cite{benigni2020optimal}.
\bel[\cite{benigni2020optimal}*{Lemma 4.8}]\label{l:belowG}
Fix $\omega, \delta_1, \eps_1, \delta_2, \eps_2>0$ such that $\delta_1 > \eps_1$ and $\ell \in \unn{1}{N}$.  
Let $H$ be a generalized Wigner matrix, and let $\B = \B(\omega,\delta_1,\varepsilon_1,\delta_2, \eps_2)$ be the set from \Cref{d:goodset}. Let $\eta_\ell =\frac{N^{-\varepsilon_2}}{N^{2/3} \hat \ell^{1/3}}$.
Then there exists a constant $C= C(\eps_2)>0$ such that for all $E \in I_{\delta_2}(\lambda_\ell) \cup I_{\delta_2}(\tilde \lambda_\ell)$,
\beq\label{e:belowG}
\sup_{a,b,c,d \in \unn{1}{N}} \sup_{w \in [0,1]} \one_{\B}\Im \Theta^{(a,b)}_w G_{cd}(E+\I\eta_\ell) \le C N^{4\epsilon_2} \left( \frac{\hat \ell}{N}\right)^{1/3}
\eeq
and
\beq\label{e:belowGreal}
\sup_{a,b,c,d \in \unn{1}{N}} \sup_{w \in [0,1]} \one_{\B}\Re \Theta^{(a,b)}_w G_{cd}(E+\I\eta_\ell) \le C N^{4\epsilon_2}.
\eeq
%Here $\tilde E = \tilde E (1, \omega)$.
\eel

In what follows we write $H$ for $\Theta^{(a,b)}_w H$, and similarly for $G$ and $m_N$, omitting the rank one perturbation from the notation.

\bp[Proof of \Cref{l:obbounds}]
We first observe by the spectral theorem that
\beq\label{e:mixtrace}
| I _N | N^{-2}  \tr  \Im  G  (z_1)  \Im  G  (z_2) = \frac{1}{N^2} \sum_{j=1}^N \frac{ | I _N | \eta_k\eta_\ell  }{ \left( ( \lambda_j - E_1)^2 + \eta^2_k\right) \left( ( \lambda_j - E_2)^2 + \eta^2_\ell  \right) } ,
\eeq
and
\beq\label{e:crossterm}
\frac{2}{N} \sum_{\alpha \in I_N}  \scp{\q_\alpha}{\Im G(z_1)\Im G(z_2)\q_\alpha} = \frac{2}{N} \sum_{j=1}^N \frac{ \eta_k\eta_\ell  \left( p_{jj}+\frac{\vert I\vert}{N}\right)}{ \left( ( \lambda_j - E_1)^2 + \eta^2_k\right) \left( ( \lambda_j - E_2)^2 + \eta^2_\ell  \right) }.
\eeq
%and similarly with $\alpha$ replaced by $\beta$ in \eqref{e:crossterm}.
\begin{comment}
We also see by the bootstrap hypothesis that
\begin{align}
\frac{1}{| I_N| } \sum_{j =1 }^N \frac{ \eta_k\eta_\ell N^2 \left( \sum_{\alpha \in I_N} u_j(\alpha)^2 \right)^2 }{ \left( ( \lambda_j - E_1)^2 + \eta^2_k\right) \left( ( \lambda_j - E_2)^2 + \eta^2_\ell  \right) } 
- \sum_{j=1}^N \frac{ \eta_k\eta_\ell | I_N|  }{ \left( ( \lambda_j - E_1)^2 + \eta^2_k\right) \left( ( \lambda_j - E_2)^2 + \eta^2_\ell  \right) }
\end{align}
\end{comment}

For \eqref{e:Zbounds}, we compute
\begin{align}\label{a37}
  \frac{1 }{ | I_N|}\sum_{ \alpha, \beta \in I_N} &\Im\scp{\q_\alpha}{G(z_1)\q_\beta}  \Im\scp{\q_\alpha}{G(z_2)\q_\beta}\\
&=  \frac{1}{|I_N|}  \sum_{\alpha, \beta \in I_N}
\left( \sum_{j=1}^N \frac{ \eta_k \scp{\q_\alpha}{\u_j} \scp{\q_\beta}{\u_j} }{ ( \lambda_j - E_1)^2 + \eta^2_k }\right)
\left( \sum_{m=1}^N \frac{\eta_\ell \scp{\q_\alpha}{\u_m} \scp{\q_\beta}{\u_m} }{( \lambda_m - E_2)^2 + \eta^2_\ell }\right)\\
&=
\frac{1}{N^2} \sum_{j\neq m}^N \frac{ \eta_k\eta_\ell  \hat p^2_{jm}(I_N) }{ \left( ( \lambda_j - E_1)^2 + \eta^2_k\right) \left( ( \lambda_m - E_2)^2 + \eta^2_\ell  \right) }\\
&+\frac{1}{|I_N|}  \sum_{j=1}^N \frac{ \eta_k\eta_\ell \left(  p_{jj} +  \frac{ | I_N | }{N} \right)^2  }{ \left( ( \lambda_j - E_1)^2 + \eta^2_k\right) \left( \lambda_j - E_2)^2 + \eta^2_\ell  \right) }.
\end{align}
The second term is canceled by \eqref{e:mixtrace} and \eqref{e:crossterm}, except for the $p_{jj}^2$ term, and
\begin{align}
\frac{1}{N^2} \sum_{j,m=1}^N \frac{ \eta_k\eta_\ell \hat p^2_{jm}(I_N) }{ \left( ( \lambda_j - E_1)^2 + \eta^2_k\right) \left( ( \lambda_m - E_2)^2 + \eta^2_\ell  \right) }
&\le N^{2\mathfrak{s} + 2\mathfrak{a}} \left(  \Im m_N(E_1 + \I \eta_k) \right) \left(  \Im m_N(E_2 + \I \eta_\ell) \right)
\label{e:useboot}
\\
& \le N^{2\mathfrak{s} + 2\mathfrak{a} + 8 \eps_2 }\left( \frac{\hat k}{N}\right)^{1/3}\left( \frac{\hat \ell}{N}\right)^{1/3}.
\end{align}
In \eqref{e:useboot}, we used the definition of the boostrap set $\mathcal E$. In the last line, we averaged \eqref{e:belowG} over all indices to bound the imaginary part of $m_N$. This proves the first claim.

The first derivative of \eqref{a37} is, where $(\eb_\alpha)_{\alpha \in\unn{1}{N}}$ is the standard basis, 
\begin{align}
\partial_{ab} &\left( \frac{1 }{ | I_N|}\sum_{ \alpha, \beta \in I_N}\Im\scp{\q_\alpha}{G(z_1)\q_\beta}  \Im\scp{\q_\alpha}{G(z_2)\q_\beta}\right) \\
& = -\frac{1}{|I_N|}  \sum_{\alpha, \beta \in I_N}
\Im \scp{\q_\alpha}{G(z_1)\eb_a} \scp{\eb_b}{G(z_1)\q_\beta}  \Im \scp{\q_\alpha}{G(z_2)\q_\beta}  \\
&- \frac{1}{|I_N|} \sum_{\alpha, \beta \in I_N} \Im\scp{\q_\alpha}{G(z_1)\eb_b}  \scp{\eb_a}{G(z_1)\q_\beta}  \Im \scp{\q_\alpha}{G(z_2)\q_\beta} \\
& - \frac{1}{|I_N|}  \sum_{\alpha, \beta \in I_N}
\scp{\q_\alpha}{G(z_1)\q_\beta} \Im \scp{\q_\alpha}{G(z_2)\eb_a}  \scp{\eb_b}{G(z_2)\q_\beta} \\
& - \frac{1}{|I_N|}\sum_{\alpha, \beta \in I_N}  \scp{\q_\alpha}{G(z_1)\q_\beta} \Im  \scp{\q_\alpha}{G(z_2)\eb_b}  \scp{\eb_a}{G(z_2)\q_\beta} .
\label{e:similar28}
\end{align}

We compute a representative term:
\begin{align}
\frac{1}{|I_N|}  &\sum_{\alpha, \beta \in I_N}
\Im \scp{\q_\alpha}{G(z_1)\eb_a} \scp{\eb_b}{G(z_1)\q_\beta}  \Im \scp{\q_\alpha}{G(z_2)\q_\beta}\\
&=  \frac{1}{|I_N|}  \sum_{\alpha, \beta \in I_N} \Im \left[\left(  \sum_{i=1}^N \frac{\scp{\q_\alpha}{\u_i} \u_i(a) }{\lambda_i - z_1 } \right) \left(  \sum_{j=1}^N \frac{\u_j(b) \scp{\q_\beta}{\u_j} }{\lambda_j - z_1  } \right) \right] \Im \left(  \sum_{m=1}^N \frac{\scp{\q_\alpha}{\u_m} \scp{\q_\beta}{\u_m} }{\lambda_m - z_2 }\right)\\
&=\frac{1}{|I_N|}  \sum_{\alpha, \beta \in I_N}\sum_{i,j=1}^N \sum_{m=1}^N \Im \left[   \frac{ \u_i(a) \u_j(b) }{(\lambda_i - z_1 )(\lambda_j - z_1 )}   \right] \Im \left(   \frac{1 }{\lambda_m - z_2 }\right)
\scp{\q_\alpha}{\u_i} \scp{\q_\alpha}{\u_m} \scp{\q_\beta}{\u_m}\scp{\q_\beta}{\u_j}.
\end{align}
When $i \neq m$ and $j \neq m$, this can be bounded similarly to the previous computation by applying the bootstrap hypothesis and eigenvector delocalization. We also observe the bound
\begin{align}\label{doublesum}
\sum_{i,j} \left| \Im  \frac{ 1 }{(\lambda_i - z_1 )(\lambda_j - z_1 )} \right| 
&= \sum_{i,j} \left| \Im  \frac{ 1 }{(\lambda_i - z_1 )} \Re \frac{1}{(\lambda_j - z_1 )} + \Re  \frac{ 1 }{(\lambda_i - z_1 )} \Im \frac{1}{(\lambda_j - z_1 )}  \right| \\
&\le  2 \sum_{i,j}  \Im  \frac{ 1 }{(\lambda_i - z_1 )}  \frac{1}{| \lambda_j - z_1 |}  \\
&\le 2 N \Im m_N(z_1) \left( \sum_{i} 1 + \frac{1}{| \lambda_i - z_1|^2 }\right)\\
&\le 2 N  \Im m_N(z_1) \left( N \big( 1  + \Im m_N(z_1) \big) \right),
\end{align}
\begin{comment}
\begin{align}
\sum_{i,j} \frac{1}{| \lambda_i - z| } \frac{1}{| \lambda_j - z| } &= \left( \sum_{i} \frac{1}{| \lambda_i - z| }\right)^2
 \le  \left( \sum_{i} 1 + \frac{1}{| \lambda_i - z|^2 }\right)^2
 \le  \bigg( N \big( 1  + \Im m_N(z) \big) \bigg)^2,
\end{align}
\end{comment}
and we can again use $\eqref{e:belowG}$ to bound the imaginary part of $m_N$.

We now consider the other cases. When $i=j=m$ we get 
\begin{align}
&\frac{1}{|I_N|}  \sum_{\alpha, \beta \in I_N} \sum_{i=1}^N \Im \left[   \frac{ \u_i(a) \u_i(b) }{(\lambda_i- z_1 )(\lambda_i - z_1 )}   \right] \Im \left(   \frac{1 }{\lambda_i - z_2 }\right)
\scp{\q_\alpha}{\u_i}^2  \scp{\q_\beta}{\u_i}^2\\
&=
\frac{1}{|I_N|} \sum_{i=1}^N \Im \left[   \frac{ \u_i(a) \u_i(b) }{(\lambda_i- z_1 )(\lambda_i - z_1 )}   \right] \Im \left(   \frac{1 }{\lambda_i - z_2 }\right)
\left( p_{ii}(I_N) + \frac{ | I_N |}{N}  \right)^2.
\end{align}
The $p_{ii}(I_N)^2$ term is negligible, and the $\left( \frac{ | I_N |}{N} \right)^2$ term in the expansion is canceled by the derivative of \eqref{e:mixtrace}. (Note that the cross-term $2 p_{ii}(I_N) | I_N | N^{-1}$ contributes to the $\left( \frac{ | I_N |}{N} \right)^2$ term by the definition of $p_{ii}$.)
We are left with 
\begin{equation}
\frac{2}{N} \sum_{i=1}^N \Im \left[   \frac{ \u_i(a) \u_i(b) }{(\lambda_i- z_1 )(\lambda_i - z_1 )}   \right] \Im \left(   \frac{1 }{\lambda_i - z_2 }\right) \left( p_{ii} + \frac{ | I_N |}{N} \right).
\end{equation}
%\ber What cancels the previous equation?\eer

With $i=m, j \neq m$ or $j =m, i \neq m$, we get cross-terms 
\begin{equation}
 \frac{1}{N} \sum_{i\neq j }^N  \Im \left[   \frac{ \u_i(a) \u_j(b) }{(\lambda_i - z_k )(\lambda_j - z_k )}   \right] \Im \left(   \frac{1 }{\lambda_i - z_\ell }\right)
 p_{ij},\quad
\frac{1}{N} \sum_{i\neq j }^N  \Im \left[   \frac{ \u_i(a) \u_j(b) }{(\lambda_i - z_k )(\lambda_j - z_k )}   \right] \Im \left(   \frac{1 }{\lambda_j - z_\ell }\right)
 p_{ij}.
\end{equation}
Together with the previous sum, these are canceled by the derivative of \eqref{e:crossterm}. The other three terms in \eqref{e:similar28} are similar.

The higher derivatives in \eqref{e:Zbounds} are controlled in the same way. One thing important to note is that differentiating only adds resolvent terms and, in particular, does not add any sums over eigenvector entries. Thus, there can only be a quadratic term for $p_{k\ell}$ which is bounded by $N^{2\mathfrak{a}+2\mathfrak{s}}$ regardless of the order of the derivatives. We omit the straightforward but tedious computations.

The remaining two almost sure bounds are trivial consequences of the inequality $\left| G_{ij}(z) \right| \le (\Im z)^{-1}$.
\ep

\bp[Proof of \Cref{l:qintegrands2}]
We denote $z=E+\I\eta_\ell$ and we use the spectral theorem to see that 
\begin{align}
\frac{1}{ \sqrt{| I_N|}}  \left| \sum_{ \alpha \in I_N} \Im \scp{\q_\alpha}{G(z)\q_\alpha} -  \Im m_N(z) \right| &= \frac{1}{ \sqrt{| I_N|}} \left| \sum_{k=1} \sum_{ \alpha \in I_N}\frac{ \eta_\ell \left(\scp{\q_\alpha}{\u_k}^2 - N^{-1} \right)}{ ( E - \lambda_k)^2 + \eta^2_\ell} \right| \\
&\le  \frac{1}{N} \sum_{k=1} \frac{ \eta_\ell  \left|\hat  p_{kk}(I_N)\right|}{ ( E - \lambda_k)^2 + \eta^2_\ell}\\
& \le N^{-1 + \sigma + \omega}  \sum_{k=1} \frac{ \eta_\ell }{ ( E - \lambda_k)^2 + \eta^2_\ell}\\
& = CN^{ \sigma + \omega}  \Im m_N(z)\label{e:ec1}\\
& \le CN^{\sigma + \omega + 4 \eps_2}   \left( \frac{\hat \ell}{N}\right)^{1/3}.
\end{align}
In the last line we used \eqref{e:belowG}. Here $C = C(\eps_2) >0$ is a constant depending on $\eps_2$.

For the first derivative, we have
\begin{multline}\label{e:firstd}
\partial_{ab} \left( \frac{1}{ \sqrt{| I_N|}} \sum_{ \alpha \in I_N} \Im \scp{\q_\alpha}{G(z)\q_\alpha} -  \Im m_N(z)\right)
\\=
\frac{1}{ \sqrt{| I_N|}} \sum_{ \alpha \in I_N} \left( \Im \scp{\q_\alpha}{G(z)\eb_a}\scp{\eb_b}{G(z)\q_\alpha}  -  \Im \frac{1}{N} \sum_{i=1}^N G_{i a }(z) G_{bi}(z) \right).
\end{multline}
Using the spectral theorem, we compute
\begin{align}
\scp{\q_\alpha}{G(z)\eb_a}\scp{\eb_b}{G(z)\q_\alpha} &= \left(\sum_{k=1}^N  \frac{\scp{\q_\alpha}{\u_k} \u_k(a) }{\lambda_k - z }\right) \left( \sum_{j=1}^N \frac{ \u_j(b) \scp{\q_\alpha}{\u_j} }{\lambda_j - z }\right)\\
&= \sum_{k=1}^N \frac{\scp{\q_\alpha}{\u_k}^2 \u_k(a) \u_k(b) }{(\lambda_k - z )^2} + \sum_{k \neq j} \frac{ \scp{\q_\alpha}{\u_k} \scp{\q_\alpha}{\u_j}\u_k(a) \u_j(b)}{(\lambda_k - z )(\lambda_j - z)}.\label{e:firstterm}
\end{align}
We also have
\begin{align}
\frac{1}{N} \sum_{i=1}^N G_{i a }(z) G_{bi}(z)  &= \frac{1}{N} \sum_{i=1}^N \left(\sum_{k=1}^N  \frac{\u_k(i) \u_k(a) }{\lambda_k - z }\right) \left( \sum_{j=1}^N \frac{ \u_j(b) \u_j(i) }{\lambda_j - z }\right)\\
&= \frac{1}{N} \sum_{k=1}^N \frac{\u_k(a) \u_k(b) }{(\lambda_k - z )^2}\label{e:center}.
\end{align}
We used that the terms in the product with $k\neq j$ vanish due to the sum over $i$ and the orthogonality of $\u_k$ and $\u_j$.

Subtracting \eqref{e:center} from the first term of \eqref{e:firstterm}, we have
\begin{align}
\Im  \frac{1} {\sqrt{|I_N | } }\sum_{k=1}^N \sum_{\alpha \in I_N}  \frac{ ( \scp{\q_\alpha}{\u_k}^2 - N^{-1}) \u_k(a) \u_k(b) }{(\lambda_k - z )^2} &= 
\frac{1}{N} \Im \sum_{k=1}^N \frac{  \hat p_{kk}(I_N) \u_k(a) \u_k(b) }{(\lambda_k - z )^2}\\
&\le N^{ - 2 + \sigma + 3 \omega}   \sum_{k=1}^N \frac{  1 }{|\lambda_k - z |^2}\\
&\le N^{ - 2 + \sigma + 3 \omega}  \frac{1}{\eta_\ell} \Im \sum_{k=1}^N \frac{  1 }{\lambda_k - z }\\
&\le N^{ \sigma + 3 \omega + 5 \eps_2 } \left( \frac{\hat \ell}{N}\right)^{1/3}. 
\end{align}
In the last inequality, we used \eqref{e:belowG}. It remains to control the second term in \eqref{e:firstterm}. Using \eqref{doublesum}, this is 
\begin{align}
\Im  \sum_{k \neq j}   \frac{1} {\sqrt{|I_N | } } \sum_{\alpha \in I_N} \frac{ \scp{\q_\alpha}{\u_k} \scp{\q_\alpha}{\u_j} \u_k(a) \u_j(b)}{(\lambda_k - z )(\lambda_j - z)} &=\frac{1}{N} \Im  \sum_{k \neq j}  \frac{  \hat p_{kj} \u_k(a) \u_j(b)}{(\lambda_k - z )(\lambda_j - z)} \\
& \le N^{-2 + \sigma +3 \omega}  \sum_{k \neq j} \Im \frac{1}{(\lambda_k - z )(\lambda_j - z)} \\
%& \le N^{\sigma + 3 \omega} \Im m_N(E + \I \eta_\ell)^2 \\
& \le N^{ \sigma +3 \omega + 8 \eps_2} \left( \frac{\hat \ell}{N}\right)^{1/3}.
\end{align}

For the second derivative, we compute
\begin{align}
\partial^2_{ab}& \left( \frac{1}{ \sqrt{| I_N|}} \sum_{ \alpha \in I_N} \Im \scp{\q_\alpha}{G(z)\q_\alpha} -  \Im m_N(z)\right)
\end{align}
by noting that 
\begin{align}
\partial^2_{ab} \scp{\q_\alpha}{G(z)\q_\alpha} &= 2 \scp{\q_\alpha}{G(z)\eb_a} G_{b a}(z) \scp{\eb_b}{G(z)\q_\alpha}\\
&+ \scp{\q_\alpha}{G(z)\eb_b} G_{a a}(z) \scp{\eb_b}{G(z)\q_\alpha} \\
&+ \scp{\q_\alpha}{G(z)\eb_a} G_{b a}(z) \scp{\eb_a}{G(z)\q_\alpha}. 
\end{align}
One representative term is
\begin{align}
&
 \frac{2}{ \sqrt{| I_N|}} \sum_{ \alpha \in I_N} \left( \Im \scp{\q_\alpha}{G(z)\eb_a} G_{b a}(z) \scp{\eb_b}{G(z)\q_\alpha}    -  \Im \frac{1}{N} \sum_{i=1}^N G_{i a }(z) G_{ba}(z) G_{bi}(z) \right)\\
&=  \frac{2}{ \sqrt{| I_N|}} \sum_{ \alpha \in I_N} \left( \Im \left[ G_{b a}(z) \left( \scp{\q_\alpha}{G(z)\eb_a}  \scp{\eb_b}{G(z)\q_\alpha}    -  \frac{1}{N} \sum_{i=1}^N G_{i a }(z) G_{bi}(z) \right) \right] \right)\\
&=  \frac{2}{ \sqrt{| I_N|}}\Im G_{b a}(z)  \sum_{ \alpha \in I_N}    \Re \left( \scp{\q_\alpha}{G(z)\eb_a}  \scp{\eb_b}{G(z)\q_\alpha}    -  \frac{1}{N} \sum_{i=1}^N G_{i a }(z) G_{bi}(z)  \right)\\
&+\frac{2}{ \sqrt{| I_N|}}\Re G_{b a}(z)  \sum_{ \alpha \in I_N}   \Im \left( \scp{\q_\alpha}{G(z)\eb_a}  \scp{\eb_b}{G(z)\q_\alpha}    -  \frac{1}{N} \sum_{i=1}^N G_{i a }(z) G_{bi}(z) \right)  \\
& \le C N^{ \sigma +3 \omega + 12 \eps_2}\left( \frac{\hat \ell}{N}\right)^{1/3}.
\end{align}
In the last line, we used \eqref{e:belowG}, \eqref{e:belowGreal}, and repeated the computations for \eqref{e:firstd}  to obtain
\beq
\frac{2}{ \sqrt{| I_N|}}  \sum_{ \alpha \in I_N}    \Re \left( \scp{\q_\alpha}{G(z)\eb_a}  \scp{\eb_b}{G(z)\q_\alpha} -   \frac{1}{N} \sum_{i=1}^N G_{i a }(z) G_{bi}(z) \right) \le C N^{\sigma +3 \omega + 8 \eps_2}.
\eeq

Differentiating further, we find terms such as

\begin{align}\label{forex}
%\partial^{k+1}_{ab}&  \left( \frac{1}{ \sqrt{| I_N|}} \sum_{ \alpha \in I_N} %\Im \scp{\q_\alpha}{G(z)\q_\alpha} -  \Im m_N(z)\right)\\
%&=
\frac{(k+1)!}{ \sqrt{| I_N|}} \sum_{ \alpha \in I_N}  \Im \left(  \scp{\q_\alpha}{G(z)\eb_a} G_{b a}(z)^k \scp{\eb_b}{G(z)\q_\alpha}   -   \frac{1}{N} \sum_{i=1}^N G_{i a }(z) G_{ba}(z)^k G_{bi}(z) \right)
\end{align}
These higher derivatives are bounded exactly as the second derivative was. For example, in  \eqref{forex}, we separate out the $G_{ba}^k$ term and bounding it using \eqref{e:belowG}.
\ep

\section{Symmetrized moment observables}\label{a:genmoments}
In this section, we define a generalization of the moment observables given in Definition \ref{d:momobs1}, which facilitated the proof of decorrelation estimates for entries of one or two eigenvectors. These generalized observables lead to decorrelation estimates for more than two eigenvectors, which may be of independent interest. For brevity, we only verify that the observables follow a parabolic equation. We omit the proofs of the corresponding decorrelation estimates, since they are straightforward  applications of the maximum principle, as in \Cref{s:decorrelation}.

 Consider a configuration of $n$ particles $\bm{\xi}$ as in Subsection \ref{subsec:emf}. We denote by $i_1,\dots,i_p$ the sites where there is at least one particle and define the set of vertices
\[
\mathscr{V}_{\bm{\xi}} = \{(i_q,a)\in\N^2,\, q\in\unn{1}{p},\, a\in\unn{1}{\xi_{i_q}} \}.
\]
Note that $\mathscr{V}_{\bm{\xi}}$ differs from $\mathcal{V}_{\bm{\xi}}$ (defined in \eqref{mathcalV}) since we do not double the number of particles here. We define now a set of matchings $\mathfrak{M}_{\bm{\xi}}$ as the set of functions $\sigma$ such that if $v\in\mathscr{V}_{\bm{\xi}}$, then $\sigma(v)=(\sigma_1(v),\sigma_2(v))\in\unn{1}{2n}^2$ with $\sigma_1(v)<\sigma_2(v)$ and $\bigcup_{v\in\mathscr{V}_{\bm{\xi}}}\{\sigma_1(v),\sigma_2(v)\}=\unn{1}{2n}$. This matches each particle to a pair of distinct indices in $\unn{1}{2n}$. Note that $\vert \mathfrak{M}_{\bm{\xi}}\vert =2^{-n}(2n)!$.

\begin{figure}[!ht]
	\centering
	\begin{subfigure}[t]{.5\textwidth}
		\centering
		\includegraphics[width=.8\linewidth]{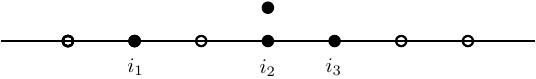}
		\caption{A configuration $\bm{\xi}$ with $\mathcal{N}(\bm{\xi})=4$ particles.}
	\end{subfigure}%
	\hspace{-0em}\begin{subfigure}[t]{.5\linewidth}
		\centering
		\includegraphics[width=.8\linewidth]{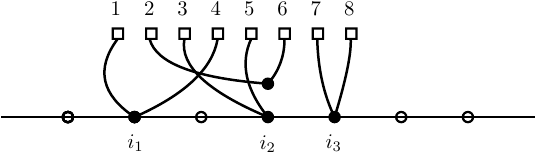}
		\captionof{figure}{An example of a matching $\sigma\in\mathfrak{M}_{\bm{\xi}}$}
	\end{subfigure}
\end{figure}
 We then define the generalized moment observable as 
 \[
 g_s(\bm{\xi})=\frac{2^n}{(2n)!\M(\bm{\xi})}\sum_{\sigma\in\mathfrak{M}_{\bm{\xi}}}\EL{\prod_{\substack{v\in\mathscr{V}_{\bm{\xi}}\\v=(k,a)}}\scp{\q_{\alpha_{\sigma_1(v)}}}{\u_k}\scp{\q_{\alpha_{\sigma_2(v)}}}{\u_k}}\quad\text{with}\quad\M(\bm{\xi})=\prod_{i=1}^N(2\xi_k-1)!!.
 \]
 
 \bep\label{p:flow2}
 For all $s\in (0,1)$, the perfect matching observable $f_s$ defined in \eqref{eq:deffs} satisfies the equation
 \beq\label{eq:emf2}
 \partial_s g_s(\bm{\xi})
 =
 \sum_{k\neq \ell}
 \xi_k(1+2\xi_\ell)
 \frac{g_s(\bm{\xi}^{k,\ell})-g_s(\bm{\xi})}{N(\lambda_k(s)-\lambda_\ell(s))^2},
 \eeq
 where $\bm{\xi}^{k,\ell}$ is the configuration where we move a particle from $k$ to $\ell$.
 \eep
 
 Before giving the proof of Proposition \ref{p:flow2}, we recall the generator for the dynamics \eqref{eq:dysonvect}.
 
\bel
 The generator acting on smooth functions of the eigenvector diffusion \eqref{eq:dysonvect} is
 \[
 \mathcal{L}_s = \frac{1}{2}\sum_{1\leqslant k<\ell\leqslant N}\frac{1}{N(\lambda_k-\lambda_\ell)^2}X_{k\ell}^2
 \]
 with
 \[
 X_{k\ell}=\sum_{\alpha =1}^N(\u_k(\alpha)\partial_{\u_\ell(\alpha)}-\u_\ell(\alpha)\partial_{\u_k(\alpha)}).
 \]
\eel

\begin{proof}[Proof of Proposition \ref{p:flow2}]
	To simplify the notation we define $\v_k(p)= \scp{\q_{\alpha_p}}{\u_k}$ so that 
	\[
	g_s(\bm{\xi})=\frac{2^n}{(2n)!\M(\bm{\xi})}\sum_{\sigma\in \mathfrak{M}_{\bm{\xi}}}\EL{\prod_{\substack{v\in \mathscr{V}_{\bm{\xi}}\\v=(k,a)}}\v_k(\sigma_1(v))\v_k(\sigma_2(v))}.
	\]
	Consider $k,\ell\in\unn{1}{N}$ such that there is at least one particle of $\bm{\xi}$ on sites $k$ and $\ell$. Then we can write
	\[
	\prod_{\substack{v\in \mathscr{V}_{\bm{\xi}}\\v=(p,a)}}\v_p(\sigma_1(v))\v_p(\sigma_2(v))
	=
	P(k,\ell)
	Q(k)Q(\ell)
	\]
	with \[Q(j)=\prod_{\substack{v\in \mathscr{V}_{\bm{\xi}}\\v=(j,a)}}\v_j(\sigma_1(v))\v_j(\sigma_2(v))\quad\text{and}\quad P(k,\ell)=\prod_{\substack{v\in \mathscr{V}_{\bm{\xi}}\\v=(p,a)\\p\neq k,\ell}}\v_p(\sigma_1(v))\v_p(\sigma_2(v)).\]
 Note that $P(k,\ell)$ does not involve any eigenvectors $\u_k$ or $\u_\ell$. We have
 \beq\label{e:sym-1}
 X_{k\ell}^2[P(k,\ell)
 Q(k)Q(\ell)]=P(k,\ell)X_{k\ell}\left[Q(\ell)X_{k\ell}Q(k)+Q(k)X_{k\ell}Q(\ell)\right].
 \eeq
 We now consider the first term $X_{k\ell}Q(\ell)X_{k\ell}Q(k)+Q(\ell)X_{k\ell}^2Q(k)$; the second term can be computed similarly. 
We have
 \beq\label{e:sym0}
 X_{k\ell}Q(k)=\sum_{v=(k,a)\in\mathscr{V}_{\bm{\xi}}}(-\v_\ell(\sigma_1(v))\v_k(\sigma_2(v))-\v_k(\sigma_1(v))\v_\ell(\sigma_2(v)))\prod_{\substack{v'=(k,b)\in\mathscr{V}_{\bm{\xi}}\\b\neq a}}\v_k(\sigma_1(v'))\v_k(\sigma_2(v')).
 \eeq
 After symmetrization, we obtain
 \beq\label{e:sym1}
 \frac{2^n}{(2n)!\M(\bm{\xi})}\sum_{\sigma\in\mathfrak{M}_{\bm{\xi}}}\EL{P(k,\ell)X_{k\ell} Q(k)X_{k\ell}Q(\ell)}
 	=
 	-\frac{2^n(4\xi_k\xi_\ell)}{(2n)!\M(\bm{\xi})}\sum_{\sigma\in\mathfrak{M}_{\bm{\xi}}}\EL{Q(k)Q(\ell)P(k,\ell)}=-4\xi_k\xi_\ell g_s(\bm{\xi}),
 \eeq
 where the 4 comes from unfolding the products and $\xi_k\xi_\ell$ comes from choosing the particles in the sum in \eqref{e:sym0}. There are two terms when computing $X_{k\ell}^2Q(k)$ coming from either differentiating the first term or the product in \eqref{e:sym0}. When differentiating the first term we obtain
 \beq\label{e:sym2}
 (I)=\sum_{v=(k,a)\in\mathscr{V}_{\bm{\xi}}}(-2\v_k(\sigma_1(v))\v_k(\sigma_1(v))+2\v_\ell(\sigma_1(v))\v_\ell(\sigma_2(v)))\prod_{\substack{v'=(k,b)\in\mathscr{V}_{\bm{\xi}}\\b\neq a}}\v_k(\sigma_1(v'))\v_k(\sigma_2(v')).
 \eeq
 When differentiating the second term, we obtain a term with a double sum
 \begin{multline}
(II) =\sum_{\substack{v=(k,a)\in\mathscr{V}_{\bm{\xi}}\\w=(k,b)\in\mathscr{V}_{\bm{\xi}}\\a\neq b}}
(-\v_\ell(\sigma_1(v))\v_k(\sigma_2(v))-\v_k(\sigma_1(v))\v_\ell(\sigma_2(v)))(-\v_\ell(\sigma_1(w))\v_k(\sigma_2(w))-\v_k(\sigma_1(w))\v_\ell(\sigma_2(w)))\\
\times \prod_{\substack{v'=(k,c)\in\mathscr{V}_{\bm{\xi}}\\c\neq a,b}}\v_k(\sigma_1(v'))\v_k(\sigma_2(v')).
 \end{multline}
After symmetrization, we have
\beq\label{e:sym3}
\frac{2^n}{(2n)!\M(\bm{\xi})}\sum_{\sigma\in\mathfrak{M}_{\bm{\xi}}}\EL{P(k,\ell)Q(\ell)(I)}
=
-2\xi_kg_s(\bm{\xi})+2\xi_k\frac{2\xi_\ell+1}{2\xi_k-1}g_s(\bm{\xi}^{k,\ell}),
\eeq
where the $\xi_k$ comes from the sum in \eqref{e:sym2} and the second term stems from the identity  $\M(\bm{\xi}^{k,\ell})=\frac{2\xi_\ell+1}{2\xi_k-1}\M(\bm{\xi})$. Similarly, the symmetrization gives
\beq\label{e:sym4}
\frac{2^n}{(2n)!\M(\bm{\xi})}\sum_{\sigma\in\mathfrak{M}_{\bm{\xi}}}\EL{P(k,\ell)Q(\ell)(II)}
=
4\xi_k(\xi_k-1)\frac{2\xi_\ell+1}{2\xi_k-1}g_s(\bm{\xi}^{k,\ell}).
\eeq
Finally, when summing \eqref{e:sym1}, \eqref{e:sym3}, \eqref{e:sym4}, and adding the second term coming from \eqref{e:sym-1}, we obtain
\[
X_{k\ell}^2g_s(\bm{\xi})
	=
	2\xi_k(1+2\xi_\ell)(g_s(\bm{\xi}^{k,\ell})-g_s(\bm{\xi}))
	+
	2\xi_\ell(1+2\xi_k)(g_s(\bm{\xi}^{\ell,k})-g_s(\bm{\xi})).
\]
If there is a particle on site $k$ and not on site $\ell$, the computation is similar but easier and if there are no particles in site $k$ and $\ell$ then $X_{k\ell}g_s(\bm{\xi})=0$. This completes the proof of Proposition \ref{p:flow2}.
\end{proof}
\bibliography{mass}
\bibliographystyle{abbrv}
\end{document}